\documentclass[11pt,leqno]{article}
\usepackage{amsmath,amsthm, amsfonts}
\usepackage{amsfonts}
\usepackage{mathrsfs}
\usepackage{hyperref}
\usepackage{amssymb}
\usepackage{epsfig}
\usepackage{subfig}
\usepackage{graphicx}
\usepackage{bm}
\usepackage{float}
\usepackage{booktabs}
\numberwithin{equation}{section} 
\newtheorem{theorem}{\bf Theorem}[section]
\newtheorem{example}{\bf Example}[section]

\newtheorem{remark}{\bf Remark}[section]
\newtheorem{lemma}{\bf Lemma}[section]

\newcommand{\norm}[1]{\left\lVert #1\right\rVert}

\newsavebox{\savepar}

\begin{document}
\title{\bf \Large Global stabilization and finite element analysis of the viscous Burgers' equation with memory subject to  Neumann boundary feedback control}
\author{
	Shishu Pal Singh\thanks{Department of Mathematical Sciences, Rajiv Gandhi Institute of Petroleum Technology. Email: shishups@rgipt.ac.in}
	\;and
		Sudeep Kundu\thanks{Department of Mathematical Sciences, Rajiv Gandhi Institute of Petroleum Technology. Email: sudeep.kundu@rgipt.ac.in}  
}
\date{\today}
 \maketitle
 \begin{abstract}
 This paper presents a global stabilization result of the viscous Burgers' equation with the memory term by applying Neumann boundary feedback control laws. We construct suitable feedback control inputs using the control Lyapunov functional and establish stabilization in the  \(L^{2}, H^{1},\) and \(H^{2}\)-norms. The existence and uniqueness of the solution are established through the  Faedo-Galerkin method. Moreover, we show the global stabilization where the diffusion coefficient $\nu$ is unknown. Then, we apply a \(C^{0}\)-conforming finite element method to the spatial variable while keeping the time variable continuous. Furthermore, we obtain global stabilization of the semi-discrete scheme and optimal error estimates for the state variable in the \(L^{\infty}\), \(L^{2}\), and  \(H^{1}\)-norms, using the Ritz-Volterra projection. Additionally, error estimates for the feedback control laws are established. Lastly, we present some numerical simulations to demonstrate the theoretical findings.
 \end{abstract}
 \textbf{Keywords:} Viscous Burgers' equation with memory, stabilization, Lyapunov functional, boundary feedback control, finite element method, error analysis.

 \textbf{MSC Classification (2020):} 93D15, 93B52, 35B37, 65M60, 65M15. 
\section{Introduction.}
 In this article, we are concerned with the Neumann boundary feedback control inputs of the viscous Burgers' equation with memory
\begin{align}
	y_{t} (x, t) + y (x, t) y_x (x, t) - \nu y_{{xx}} (x, t) - \rho
	\int_0^t e^{- \delta (t - s)} y_{xx} (x, s) {ds} &= 0 \quad (x,
	t) \in (0, 1) \times (0, \infty),  &  &  \label{eq:1.1}\\
	y_x (0, t) = u_{0} (t) \quad t \in (0, \infty),  &  &  \label{eq:1.2}\\
	y_x (1, t) = u_{1}(t) \quad t \in (0, \infty),&  &  \label{eq:1.3}\\
	y(x,0) = y_{0}(x) \quad x\in(0,1).\label{eq:1.4}
\end{align}
Here, $\nu$, $\rho,$ and $ \delta$  are positive constants; $u_{0}(t)$ and  $u_{1}(t)$ are boundary feedback control inputs and $y_{0}(x)$ is a given initial datum. Through a linear time convolution involving the diffusion term $y_{xx}(.,.)$ and the memory kernel $e^{-\delta t}$, the influence of the memory term is incorporated.


For $\rho=0$, this model \eqref{eq:1.1}-\eqref{eq:1.4} is known as the viscous Burgers' equation with Neumann boundary,
 which has various physical phenomena such as gas dynamics, turbulent flows, boundary layer problems and traffic flow problems \cite{Abbasi2018} etc. In the case of nonzero $\rho$,  the state variable depends on the past history of the system. Additionally, this equation with memory allows for modeling of the viscoelastic behavior\cite{MR4408112}, the study of heat conduction \cite{Gurtin1968}, bidirectional nonlinear shallow water waves \cite{MR828198}, nuclear reactor dynamics, and population dynamics \cite{ZHAO2023575} etc. The parabolic equation with memory is an important problem in control theory that attracts the attention of many researchers.

 In control theory, the work related to stabilization of the viscous Burgers' type equation has been going on for the last few decades. Local stabilization results  are found in \cite{MR1662924, Burns1991, MR1269994} and \cite{MR1652947}. In \cite{MR1442024}, the authors examine the existence and uniqueness of the viscous Burgers' equation for any initial data in the \(L^{2}\)-norm. The global stabilization of the viscous Burgers' equation with Neumann and Dirichlet boundary control is established by Krstic \cite{MR1751258} using the control Lyapunov functional. After that, the authors discuss stabilization in the \(H^{1}\)-norm \cite{balogh1999global} and well-posedness of the classical solution. 
In \cite{liu2001adaptive, MR2091456, MR2146486}, stabilization results of the viscous Burgers' equation for both adaptive (where \(\nu\) is unknown) and non-adaptive (when \(\nu\) is known) cases are proved using distributed and boundary feedback control inputs.
 
 The recent paper \cite{MR4408112} provides local stabilization of the viscous Burgers' equation with memory with distributed feedback control input. The stabilization of the heat equation with a memory, subject to the Dirichlet boundary feedback control is established in \cite{MR3818113, MR3338307}. In the case of distributed feedback control, Sistla et al. \cite{sistla2025lqr} discuss the so called \(\omega\)-stabilization of the heat equation with a memory. Furthermore, stabilization of the class of linear parabolic equation with memory is analyzed using projection based feedback input operators in \cite{khan2025dynamic} and for
 related to parabolic integro-differential (PID) equation has been shown in \cite{MR1045774}. In \cite{bag2025memory}, the authors prove stabilization of the generalized Burgers'-Huxley equation with memory by the Dirichlet boundary control input.
  
  Using a \(C^{0}\)-conforming finite element method, the error analysis for the state variable in the \(L^{\infty}(L^{\infty}), L^{\infty}(L^{2}),\) and \(L^{\infty}(H^{1})\)-norms of the semi-discrete scheme for the viscous Burgers' equation is established in \cite{ MR3790146} keeping the time variable continuous. Super-convergence results for the feedback control laws are also proved. Moreover, the results related to the PID equation without control have been well established. For example, in \cite{MR1225703}, using a finite element method in the spacial direction and the backward Euler method in the temporal direction, error analysis with a weakly singular kernel is proved. The optimal error estimates of a semi-discrete scheme are analyzed in \cite{MR1631540} for the general PID equation using a Galerkin finite element method. Using an $H^{1}$-Galerkin Mixed finite element method, the error of numerical solution to the evolution equation with a positive kernel is discussed in \cite{S0036142900372318}. Moreover, the PID equation using a finite difference method and the cubic spline collocation method has been solved in \cite{NETA1985607} and \cite{fairweather1994spline}, respectively.
 
 In this article, we pursue two main objectives. First, global stabilizations for the viscous Burgers' equation with memory term under the Neumann boundary feedback control laws are established. Second, we derive error estimates for the state variable and feedback control laws, using a finite element method.
 Here, we find boundary feedback control laws that achieve global asymptotic stability for the viscous Burgers' equation with memory. Moreover, applying a \(C^{0}\)-conforming finite element method to this equation and keeping the time variable continuous, we discuss the error analysis for the state variable and the control inputs. 
 
 The major contribution of this article are as follows:
 \begin{itemize}
 	\item First, we derive boundary feedback control inputs using the control Lyapunov functional. With the help of the Faedo-Galerkin method, we show the existence and uniqueness of the feedback control problem \eqref{eq:4.1}-\eqref{eq:4.8}. By applying boundary feedback control inputs, we discuss the global stabilization in the \(L^{2}, H^{1},\) and \(H^{2}\)-norms and exponential bounds for the state variable. Moreover, we establish some regularity estimates of the state variable.
 	\item When the diffusion coefficient \(\nu\) is unknown, we analyze the adaptive feedback control inputs and find identifiers at both end points using the Lyapunov approach.
 	\item A \(C^{0}\)-conforming finite element method is applied to the space variable keeping the time variable continuous, and the global stabilization of the semi-discrete scheme is shown. We establish the corresponding error analysis for the state variable and control inputs, using the Ritz-Volterra projection. In addition, the error for the state variable is analyzed in the \(L^{\infty}, L^{2},\) and \(H^{1}\)-norms.
 	\item Finally, we present some numerical examples to demonstrate the effects of the memory term's and other parameters.
 \end{itemize}

In this paper, we use the following notations:

 The space $ L^{p}((0,T);X)$ consists of all strongly measurable functions $ f:[0,T]\rightarrow X $, equipped with the norm
\begin{align*}
	\norm{f}_{L^{p}((0,T);X)}:=
	\begin{cases}
		\left(\int_{0}^{T}\norm{f}_{X}^{p}dt\right)^{\frac{1}{p}}< \infty, & \ 1 \leq p<\infty, \\
		\mathop{\mathrm{ess\, sup}} \limits_{0\leq t\leq T} (\norm{f(t)}_{X})< \infty, \medspace & \  p=\infty,
	\end{cases}
\end{align*}
where $ X $ denotes a Banach space with the norm $ {||.||}_{X}$. Denote  $ {( .,. )}$  the $ L^{2}$-inner product with the corresponding norm denoted by \( \norm{\cdot} \).  For convenience, we write \( L^{p}((0,T);X) \) as \( L^{p}(X) \). Moreover, denote \(H^{m}(0,1)\) as the standard Sobolev space with the norm \(\norm{\cdot}_{m}\).
\begin{itemize}
	\item  \textbf{Gr\"onwall's inequality\cite{ MR3790146}:} Let \(A(t)\) be a non-negative, absolutely continuous function on \([0, \infty)\) that satisfies for a.e. \(t\) the differential inequality 
	\[ \frac{d}{dt}A(t)+B(t)\leq C(t)+K(t)A(t),
	\]
	where \(B, C,\) and \(K\) are non-negative, locally integrable function on \([0, \infty)\). Then 
	\[0\leq A(t)+\int_{0}^{t}B(s)ds\leq \Big(A(0)+\int_{0}^{t}C(s)ds\Big)\exp\big(\int_{0}^{t}K(s)ds\big), \forall t>0.
	\]
	\item \textbf{Poincar\'e-Wirtinger's Inequality\cite{ MR3790146}:} For \(f\in H^{1}(0,1)\), there holds
	\[ \norm{f}^{2}\leq 2\norm{ f_{x}}^{2}+ f^{2}(i), \quad i=0,1,
	\]
	and \[\norm{f}_{\infty}\leq \sqrt{2}\norm{|f|},
	\]
	where \(\norm{|f|}:=\sqrt{\norm{f_{x}}^{2}+f^{2}(0)+f^{2}(1)} \), which is equivalent to the $H^{1}$-norm.
	\item \textbf{Young's Inequality:} For all \( a, b > 0 \) and \( \epsilon > 0 \), we have  
	\[
	ab \leq \frac{\epsilon}{2} a^{2} + \frac{1}{2\epsilon} b^{2}.
	\]
\end{itemize}

The steady state problem of the closed loop system \eqref{eq:1.1}-\eqref{eq:1.4} is to find $ y^{\infty} $ such that 
\begin{align}
	-\left(\nu+\frac{\rho}{\delta}\right) y_{xx}^{\infty} + y^{\infty}y_{x}^{\infty} =0,\label{eq:3.1}\\
	y_{x}^{\infty}(0) = y_{x}^{\infty}(1) = 0.\label{eq:3.2}
\end{align}
 The steady state problem \eqref{eq:3.1}- \eqref{eq:3.2} is equivalent to the steady state of the viscous Burgers' equation with Neumann boundary. Then, the steady state solution of the problem \eqref{eq:3.1}- \eqref{eq:3.2} is any constant (say $ w_{d}$). For more details see \cite{ALLEN20021165, MR3100771, Burns1991}.

Without loss of generality, we assume that $w_{d}\geq 0$. Our goal is 
\[ \lim_{t\rightarrow\infty} y(x,t) = w_d\quad  \forall\,\, x \in [0,1].\]

Hence, it is enough to stabilize around $w=0$, where we consider $w = y - w_d$ such that $w=0$ as $t\rightarrow \infty$. Introducing a new variable to write the system \eqref{eq:1.1}-\eqref{eq:1.4} in the couple form for finding feedback control laws
\begin{align}
\label{1.8}
    z(x,t) := \int_{0}^{t}e^{-\delta(t-s)}w_{x}(x,s)ds.
\end{align}
From \eqref{eq:1.1}-\eqref{eq:1.4} and \eqref{eq:3.1}-\eqref{eq:3.2}, we have the system in the following form after differentiating \eqref{1.8} with respect to time
\begin{align}
	w_t(x,t) - \nu w_{xx}(x,t) + w(x,t)w_x(x,t) + w_d w_x(x,t) &= \rho z_x(x,t) \quad (x,t) \in (0,1)\times(0, \infty),\label{eq:4.1}\\
	z_t(x,t) + \delta z(x,t) &=  w_x(x,t) \quad (x,t) \in (0, 1) \times (0, \infty),\label{eq:4.2}\\
	w_x(0,t) &= v_0(t) \quad t\in (0, \infty),\label{eq:4.3}\\
	w_x(1,t) &= v_1(t) \quad t\in (0, \infty),\label{eq:4.4}\\
	w(x,0)& =y_{0}(x)-w_{d}=: w_0(x) (\text{say}),\quad x\in(0,1),\label{eq:4.5}\\
	z(x,0) &= 0, \quad x\in[0,1].\label{eq:4.8}
\end{align}
To find the feedback control laws, we consider the following control Lyapunov functional
\begin{align*}
	V &:= \frac{1}{2}\int_{0}^{1}\big(w^2(x,t) +\rho z^2(x,t)\big) dx.
\end{align*}
Differentiating with respect to time, we obtain
\begin{align*}
	\frac{d{V}}{dt} &= \int_{0}^{1}w(x,t)w_t(x,t)dx+ \rho \int_{0}^{1}z(x,t)z_t(x,t)dx.
\end{align*}
Therefore, from \eqref{eq:4.1} and \eqref{eq:4.2}, it follows that 
\begin{align*}
	\frac{dV}{dt}= \int_{0}^{1}w(x,t)\Big(\rho z_x(x,t) &+ \nu w_{xx} (x,t)- w(x,t)w _x(x,t) - w_dw_x(x,t)\Big)dx 
	 \\& + \rho \int_{0}^{1}z(x,t)( w_x(x,t) - \delta z(x,t))dx.
\end{align*}
Using integration by parts, we see that
\begin{align}
	\label{eq:5}
	\frac{dV}{dt} = \rho w(1,t)z(1,t) &- \rho w(0,t)z(0,t) - \rho \int_{0}^{1}w_x(x,t) z(x,t) dx +\nu w(1,t)w_x(1,t)-\nu w(0,t)w_x(0,t) \nonumber\\&- \nu \int_0^1w_x^2(x,t)dx -\frac{1}{3}\Big(w^3(1,t)-w^3(0,t) \Big)
	\nonumber\\&-\frac{w_d}{2}\Big(w^2(1,t)-w^2(0,t)\Big)+ \rho \int_0^1z(x,t)w_x(x,t)dx -\rho\delta \int_0^1z^2(x,t)dx.
\end{align}
A use of the Cauchy-Schwarz and Young's inequality yields
	\begin{align*}
		\frac{1}{3}w^{3}(0,t)\leq \frac{c_{0}}{2}w^{2}(0,t)+\frac{1}{18c_{0}}w^{4}(0,t),\\
		-\frac{1}{3}w^{3}(1,t)\leq \frac{c_{1}}{2}w^{2}(1,t)+\frac{1}{18c_{1}}w^{4}(1,t).
	\end{align*}
	Using these inequality in \eqref{eq:5}, we obtain
\begin{align}
	\frac{dV}{dt}&\leq  w(1,t)\bigg( \nu w_x(1,t) +(c_{1} + w_{d})w(1,t)+ \frac{2}{9c_1}w^3(1,t)+\rho z(1,t)\bigg) -\frac{3w_d}{2}w^2(1,t) - \frac{c_{1}}{2} w^{2}(1,t)
	\nonumber\\
	&\quad -\frac{1}{6c_1}w^4(1,t) - w(0,t)\bigg( \nu w_x(0,t) -( c_{0} + w_{d})w(0,t)- \frac{2}{9c_{0}}w^3(0,t)+\rho z(0,t) \bigg) - \frac{1}{6c_0}w^{4}(0,t)\nonumber\\
	& \quad -\frac{w_d}{2}w^2(0,t)-\frac{c_{0}}{2} w^2(0,t) -\nu\norm{w_{x}}^2 -\rho\delta \norm{z}^2. \nonumber
\end{align}
We choose feedback control laws in the following form
\begin{align}
	v_0(t) = \frac{1}{\nu}\bigg((c_0 + w_d)w(0,t) + \frac{2}{9c_0}w^3(0,t) - \rho z(0,t)\bigg),\label{v0}\\
	v_1(t) = -\frac{1}{\nu}\bigg((c_1 + w_d)w(1,t) + \frac{2}{9c_1}w^3(1,t) + \rho z(1,t)\bigg),\label{v1}
\end{align}
therefore, we arrive at 
\[ \frac{dV}{dt} \leq -\nu\norm{w_{x}}^2 -\rho\delta \norm{z}^2-\frac{w_d}{2}w^2(0,t)-\frac{c_{0}}{2} w^2(0,t)-\frac{3w_d}{2}w^2(1,t) - \frac{c_{1}}{2} w^{2}(1,t).
\]
Consequently, we can write
\[\frac{dV}{dt} \leq -\min\Big(\nu, \delta,\frac{(w_{d}+c_{0})}{2},\frac{(3w_{d}+c_{1})}{2}\Big)\Big(\norm{w}^{2}+\rho\norm{z}^{2}\Big)=-2\min\Big(\nu, \delta,\frac{(w_{d}+c_{0})}{2},\frac{(3w_{d}+c_{1})}{2}\Big)V,
\]
where $\nu, w_{d}, c_{0}, \delta,$ and $c_{1}$ are positive constants.

The weak formulation of the system \eqref{eq:4.1}-\eqref{eq:4.8} is to find $ w\in H^{1}(0,1)$  such that 
\begin{align}
	\label{1.13}
	\nonumber(w_{t},\phi) +\nu (w_{x},\phi_{x})+w_{d}(w_{x},\phi) +(ww_{x},\phi)+ \left((c_{0}+w_{d})w(0,t)+  \frac{2}{9c_{0}}w^{3}(0,t)-\rho z(0,t)\right)\phi(0)\\ +\left((c_{1}+w_{d})w(1,t)+ \frac{2}{9c_{1}}w^{3}(1,t)+\rho z(1,t)\right)\phi(1)=\rho (z_{x}, \phi), \hspace{5mm} \forall \  \phi \in H^{1},
\end{align}
and 
\begin{align}
	\label{1.14}
	z_{t}+\delta z= w_{x},
\end{align}
with $ w(x,0)=w_{0}(x)$ and $ z(x,0)=0.$

We write the system \eqref{1.13}-\eqref{1.14} in the equivalent form
\begin{align}
	\label{6.1}
	(w_{t},\phi) +\nu (w_{x},\phi_{x})+w_{d}(w_{x},\phi) +(ww_{x},\phi)+ \left((c_{0}+w_{d})w(0,t)+  \frac{2}{9c_{0}}w^{3}(0,t)\right)\phi(0)\\ \nonumber+\left((c_{1}+w_{d})w(1,t)+ \frac{2}{9c_{1}}w^{3}(1,t)\right)\phi(1)+\rho (z, \phi_{x})=0, \hspace{5mm} \forall \  \phi \in H^{1},
\end{align}
where \(z\) is given in \eqref{1.8}.

Throughout the paper, we assume that \(C\) is a generic positive constant. 

We consider the following assumption, which is needed in the proof of stabilization for the state variable.\\
 \textbf{(A1.)}
Compatibility conditions at \(t=0\), \(w_{0x}(0)=v_{0}(0), w_{0x}(1)=v_{1}(0), w_{0xt}(0)=v_{0t}(0)\), and \(w_{0xt}(1)=v_{1t}(0)\) with \(w_{0}\in H^{3}(0,1).\)\\
Further, we assume the following regularity result, which is proved in Section \ref{3}.\\
\textbf{(A2.)}
Let \(w_{0}\in H^{3}(0,1).\) Then there exists a unique weak solution \(w\) such that 
\begin{align*}
	\norm{w}_{2}^{2}+\norm{w_{t}}_{1}^{2}+\norm{z}^{2}+\int_{0}^{t}\Big(\norm{|w(s)|}^{2}+\norm{w_{t}(s)}_{2}^{2}+\norm{z(s)}^{2}\Big)ds\leq C\norm{w_{0}}_{3}^{2}.
\end{align*}

The rest of the paper is organized as follows. Section \ref{2} contains the stabilization of the problem \eqref{eq:4.1}-\eqref{eq:4.8} and some regularity results. Moreover, it presents the stabilization of the state variable in the \(H^{2}\)-norms. Furthermore, we discuss adaptive feedback control when the diffusion coefficient is unknown. Section \ref{3} shows the existence and uniqueness of the problem \eqref{eq:4.1}-\eqref{eq:4.8}. In Section \ref{4}, we discuss a \(C^{0}\)-conforming finite element method and show global stabilization of the semi-discrete scheme. Additionally, error analysis of the semi-discrete scheme for the state variable and control inputs is established.  Section \ref{5} is devoted to some numerical examples. Finally, in Section \ref{6} concluding remarks are given.

\section{Stabilization.}
\label{2}
In this section, we deal with the stabilization of the closed loop system \eqref{eq:4.1}-\eqref{eq:4.8} applying feedback control laws \eqref{v0} and \eqref{v1} in the \(L^{2}, H^{1},\) and \(H^{2}\)-norms. Moreover, we establish some regularity results, which are necessary for the proof of error analysis of the state variable and control inputs.
\begin{lemma}
	\label{L2.1}
	Let the assumptions \((A1)\) and \((A2)\) hold. Then, there exists \( 0\leq\alpha \leq \frac{1}{2}\min\{\nu, \delta,  c_{0}+w_{d}, c_{1}+w_{d}\}\) such that
	\begin{align*}
		\norm{e^{\alpha t}w}^{2}&+\rho\norm{e^{\alpha t}z}^{2}+\beta \int_{0}^{t}e^{2\alpha s}\left(\norm{w_{x}(s)}^{2}+\rho\norm{z(s)}^{2}+w^{2}(0,s)+w^{2}(1,s) \right)ds \nonumber\\&+ \frac{1}{3c_{0}}\int_{0}^{t}e^{2\alpha s}w^{4}(0,s)ds +\frac{1}{3c_{1}}\int_{0}^{t}e^{2\alpha s}w^{4}(1,s)ds\leq \norm{w_{0}}^{2},
	\end{align*}
	where $ 0<\beta= \min\{(\nu-2\alpha), (\delta-2\alpha), (c_{0}+w_{d}-2\alpha), (c_{1}+w_{d}-2\alpha)   \}.$  
\end{lemma}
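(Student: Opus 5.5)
We propose to prove Lemma \ref{L2.1} by a weighted energy estimate: test the closed-loop weak form \eqref{1.13} with \(\phi=w\), combine it with \eqref{1.14} multiplied by \(\rho z\) and integrated over \((0,1)\), then multiply by \(e^{2\alpha t}\) and integrate in time. This is the rigorous, weighted version of the Lyapunov computation that produced the feedback laws \eqref{v0}--\eqref{v1}. Assumptions (A1)--(A2) give enough regularity to take \(\phi=w\), to integrate by parts, and to differentiate \(\norm{w}^2\) and \(\norm{z}^2\) in time.

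\textbf{Step 1 (the energy identity).} With \(\phi=w\) in \eqref{1.13} we use two boundary identities:
\[
(ww_x,w)=\tfrac13\big(w^3(1)-w^3(0)\big),\qquad w_d(w_x,w)=\tfrac{w_d}{2}\big(w^2(1)-w^2(0)\big).
\]
Testing \eqref{1.14} with \(\rho z\) gives \(\tfrac{\rho}{2}\tfrac{d}{dt}\norm{z}^2+\rho\delta\norm{z}^2=\rho(w_x,z)\). Adding the two relations, the interior coupling terms \(\rho(z_x,w)\) and \(\rho(w_x,z)\) combine, after integration by parts, into exactly the boundary terms \(\rho w(1)z(1)-\rho w(0)z(0)\). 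These cancel the \(\rho z\) contributions in the feedback terms. We obtain
\[
\tfrac12\tfrac{d}{dt}\big(\norm{w}^2+\rho\norm{z}^2\big)+\nu\norm{w_x}^2+\rho\delta\norm{z}^2+\Big(c_0+\tfrac{w_d}{2}\Big)w^2(0)+\Big(c_1+\tfrac{3w_d}{2}\Big)w^2(1)+\tfrac{2}{9c_0}w^4(0)+\tfrac{2}{9c_1}w^4(1)=\tfrac13 w^3(0)-\tfrac13 w^3(1).
\]

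\textbf{Step 2 (absorbing the cubic terms; main obstacle).} The only delicate point is controlling the cubic boundary terms so that the quartic coefficients \(\tfrac{1}{3c_i}\) survive after multiplying by 2. We apply Young's inequality as in the derivation of the feedback laws, with the split
\[
\tfrac13|w|^3\le \tfrac{c_i}{2}w^2+\tfrac{1}{18c_i}w^4 .
\]
The quartic remainders then satisfy \(\tfrac{2}{9c_i}-\tfrac{1}{18c_i}=\tfrac{1}{6c_i}\), and the quadratic remainders are at least \(\tfrac{c_i+w_d}{2}\) (using \(w_d\ge0\)). Multiplying by 2 gives
\[
\tfrac{d}{dt}\big(\norm{w}^2+\rho\norm{z}^2\big)+2\nu\norm{w_x}^2+2\rho\delta\norm{z}^2+(c_0+w_d)w^2(0)+(c_1+w_d)w^2(1)+\tfrac{1}{3c_0}w^4(0)+\tfrac{1}{3c_1}w^4(1)\le0 .
\]
If the constants do not match exactly, we will adjust the Young parameter. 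Since the statement only asks for \(\beta\) together with the coefficients \(\tfrac{1}{3c_i}\), there is room to spare.

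\textbf{Step 3 (the exponential weight).} Multiply by \(e^{2\alpha t}\) and use
\[
\tfrac{d}{dt}\big(e^{2\alpha t}E\big)=e^{2\alpha t}E'+2\alpha e^{2\alpha t}E,\qquad E:=\norm{w}^2+\rho\norm{z}^2 .
\]
We must absorb the term \(2\alpha e^{2\alpha t}\norm{w}^2\). For this we use the Poincaré–Wirtinger inequality in the form \(\norm{w}^2\le\norm{w_x}^2+w^2(0)+w^2(1)\), which follows from \(\norm{w}^2\le 2\norm{w_x}^2+w^2(i)\) by averaging over \(i=0,1\). The term \(2\alpha\rho\norm{z}^2\) is absorbed by \(2\rho\delta\norm{z}^2\). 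The resulting coefficients are \(2\nu-2\alpha\ge\nu-2\alpha\), \(\rho(2\delta-2\alpha)\ge\rho(\delta-2\alpha)\), and \(c_i+w_d-2\alpha\). Each of these is at least \(\beta\), and \(\beta>0\) by the restriction on \(\alpha\); the case \(\alpha=0\) is trivial. Integrating from \(0\) to \(t\) and using \(z(\cdot,0)=0\), so that \(E(0)=\norm{w_0}^2\), yields the claimed bound.
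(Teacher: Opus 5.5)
Your proposal is correct and follows the paper's proof step for step: test with $\phi=w$ and with $\rho z$, cancel the coupling terms by integration by parts, apply the Young split $\tfrac13|w|^3\le\tfrac{c_i}{2}w^2+\tfrac{1}{18c_i}w^4$, and then absorb the $2\alpha$ terms from the exponential weight using a boundary Poincar\'e inequality. One minor slip is that averaging $\norm{w}^2\le 2\norm{w_x}^2+w^2(i)$ over $i$ gives $\norm{w}^2\le 2\norm{w_x}^2+\tfrac12\big(w^2(0)+w^2(1)\big)$, not your inequality; yours does hold, but it comes from averaging $\norm{w}^2\le\norm{w_x}^2+2w^2(i)$ (integrate $w^2(x)\le 2w^2(0)+2x\norm{w_x}^2$ over $(0,1)$, and similarly from $x=1$), and in any case the weaker averaged version already suffices, since it leaves coefficients $2(\nu-2\alpha)\ge\nu-2\alpha$ and $c_i+w_d-\alpha\ge c_i+w_d-2\alpha$.
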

\begin{proof}
	Setting $ \phi=w $ in \eqref{1.13} and forming the $ L^{2}$-inner product between  \eqref{1.14} and $z$ gives
	\begin{align}
		\label{1.15}
		\frac{1}{2}\frac{d}{dt}\norm{w}^{2}&+ \nu\norm{w_{x}}^{2}+\frac{w_{d}}{2}\left(w^{2}(1,t)-w^{2}(0,t)\right)+\frac{1}{3}\left(w^{3}(1,t)-w^{3}(0,t)\right)\nonumber\\&+(c_{0}+w_{d})w^{2}(0,t)+  \frac{2}{9c_{0}}w^{4}(0,t)-\rho z(0,t)w(0,t)+ \rho z(1,t)w(1,t)\nonumber\\&+(c_{1}+w_{d})w^{2}(1,t)+ \frac{2}{9c_{1}}w^{4}(1,t)=\rho(z_{x}, w),
\end{align}
	and 
	\begin{align}
		\label{1.16}
		\frac{1}{2}\frac{d}{dt}\norm{z}^{2}+\delta\norm{z}^{2}= (w_{x}, z).
	\end{align}
	Using integration by parts on the right hand side of \eqref{1.16}, we arrive at
	\begin{align}
		\label{1.17}
		\frac{1}{2}\frac{d}{dt}\norm{z}^{2}+\delta\norm{z}^{2}= \left(z(1,t)w(1)-z(0,t)w(0)\right)- (z_{x}, w).
	\end{align}
	Substituting the value of $  (z_{x}, w) $ into \eqref{1.15}, we observe that
	\begin{align}
		\label{1.18}
		\frac{1}{2}\frac{d}{dt}\left(\norm{w}^{2}+\rho\norm{z}^{2}\right) &+ \nu\norm{w_{x}}^{2}+\rho\delta\norm{z}^{2} +\frac{w_{d}}{2}\left(w^{2}(1,t)-w^{2}(0,t)\right)\nonumber\\&+(c_{0}+w_{d})w^{2}(0,t)+  \frac{2}{9c_{0}}w^{4}(0,t)+(c_{1}+w_{d})w^{2}(1,t)+ \frac{2}{9c_{1}}w^{4}(1,t)\nonumber\\&=-\frac{1}{3}\left(w^{3}(1,t)-w^{3}(0,t)\right).
	\end{align}
	A use of the Cauchy-Schwarz inequality and Young's inequality on the right hand side of \eqref{1.18} yields
	\begin{align*}
		\frac{1}{3}w^{3}(0,t)\leq \frac{c_{0}}{2}w^{2}(0,t)+\frac{1}{18c_{0}}w^{4}(0,t),\\
		\frac{1}{3}w^{3}(1,t)\leq \frac{c_{1}}{2}w^{2}(1,t)+\frac{1}{18c_{1}}w^{4}(1,t).
	\end{align*}
	Substituting these values into \eqref{1.18}, we obtain
	\begin{align*}
		\frac{1}{2}\frac{d}{dt}\left(\norm{w}^{2}+\rho\norm{z}^{2}\right) &+ \nu\norm{w_{x}}^{2}+\rho\delta\norm{z}^{2}+ \frac{1}{2}\left( (c_{0}+w_{d})w^{2}(0,t)+  \frac{1}{3c_{0}}w^{4}(0,t) \right)\nonumber\\&+\frac{1}{2}\left( (c_{1}+3w_{d})w^{2}(1,t)+ \frac{1}{3c_{1}}w^{4}(1,t)\right)\leq 0.
	\end{align*}
	Multiplying by $ 2e^{2\alpha t} $ to the above equation, it follows that
	\begin{align}
		\label{1.19}
		\frac{d}{dt}\left(\norm{e^{\alpha t}w}^{2}+\rho\norm{e^{\alpha t}z}^{2}\right)&-2\alpha e^{2\alpha t}\left(\norm{w}^{2}+\rho \norm{z}^{2}\right)+2e^{2\alpha t} \nu\norm{w_{x}}^{2}\nonumber\\&+2\rho\delta e^{2\alpha t}\norm{z}^{2}+e^{2\alpha t}\left( (c_{0}+w_{d})w^{2}(0,t)+  \frac{1}{3c_{0}}w^{4}(0,t) \right)\nonumber\\&+e^{2\alpha t}\left( (c_{1}+3w_{d})w^{2}(1,t)+ \frac{1}{3c_{1}}w^{4}(1,t)\right)\leq 0.
	\end{align}
	Using Poincar\'e-Wirtinger's inequality in \eqref{1.19}, we arrive at
	\begin{align*}
		\frac{d}{dt}\left(\norm{e^{\alpha t}w}^{2}+\rho\norm{e^{\alpha t}z}^{2}\right)&+2(\nu-\alpha)e^{2\alpha t}\norm{w_{x}}^{2}+2\rho(\delta-\alpha)e^{2\alpha t}\norm{z}^{2}\nonumber\\&+e^{2\alpha t}\left( (c_{0}+w_{d}-2\alpha)w^{2}(0,t)+  \frac{1}{3c_{0}}w^{4}(0,t) \right)\nonumber\\&+e^{2\alpha t}\left( (c_{1}+3w_{d}-2\alpha)w^{2}(1,t)+ \frac{1}{3c_{1}}w^{4}(1,t)\right)\leq 0.
	\end{align*}
	Integrating with respect to time $ t $ from $ 0 $ to $ t $ gives
	\begin{align*}
		\norm{e^{\alpha t}w}^{2}&+\rho\norm{e^{\alpha t}z}^{2}+\beta \int_{0}^{t}e^{2\alpha s}\left(\norm{w_{x}(s)}^{2}+\rho\norm{z(s)}^{2}+w^{2}(0,s)+w^{2}(1,s) \right)ds \nonumber\\&+ \frac{1}{3c_{0}}\int_{0}^{t}e^{2\alpha s}w^{4}(0,s)ds +\frac{1}{3c_{1}}\int_{0}^{t}e^{2\alpha s}w^{4}(1,s)ds\leq \norm{w_{0}}^{2},
	\end{align*}
	where $ 0<\beta= \min\{(\nu-2\alpha), (\delta-2\alpha), (c_{0}+w_{d}-2\alpha), (c_{1}+w_{d}-2\alpha)   \},$ \\ and $0<\alpha \leq \frac{1}{2}\min\{\nu, \delta,  c_{0}+w_{d}, c_{1}+w_{d}\}.$
\end{proof}
The next lemma discusses the stabilization in the \(H^{1}\)-norm for the state variable.
\begin{lemma}
	\label{L2.2}
	 Under the assumptions \((A1)\) and \((A2)\), it holds
	\begin{align*}
		\frac{\nu }{2}\norm{w_{x}}^{2}+E(t)+e^{-2\alpha t}\int_{0}^{t}e^{2\alpha s}\norm{w_{t}(s)}^{2}ds\leq Ce^{-2\alpha t}\norm{w_{0}}_{1}^{2} e^{C\norm{w_{0}}^{2}},
	\end{align*}
where \[E(t)=\sum_{i=0}^{1}\Big((c_{i}+w_{d})w^{2}(i,t)+\frac{1}{9c_{i}}w^{4}(i,t)\Big).
\]
\end{lemma}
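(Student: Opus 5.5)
Test the weak form \eqref{6.1} with $\phi=w_t$. This gives $\|w_t\|^2$ from the time-derivative term. The term $\nu(w_x,w_{xt})$ becomes $\frac{\nu}{2}\frac{d}{dt}\|w_x\|^2$. The boundary feedback terms become
\[
\frac{d}{dt}\sum_{i=0}^{1}\Big(\frac{c_i+w_d}{2}w^2(i,t)+\frac{1}{18c_i}w^4(i,t)\Big)=\frac12\frac{d}{dt}E(t).
\]
This is how $E(t)$, with the coefficient $\frac{1}{9c_i}$, arises. The remaining terms go to the right-hand side:
\[
\frac{1}{2}\frac{d}{dt}\big(\nu\|w_x\|^2+E(t)\big)+\|w_t\|^2=-w_d(w_x,w_t)-(ww_x,w_t)-\rho(z,w_{xt}).
\]
Multiplying by $2e^{2\alpha t}$ produces the extra terms $2\alpha e^{2\alpha t}(\nu\|w_x\|^2+E)$. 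These are absorbed using Lemma \ref{L2.1}, which controls $\int_0^t e^{2\alpha s}(\|w_x\|^2+w^2(i,s)+w^4(i,s))\,ds$ by $\|w_0\|^2$.

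For the right-hand side, Young's inequality gives $w_d|(w_x,w_t)|\le \frac14\|w_t\|^2+C\|w_x\|^2$. For the nonlinear term I would use $\|w\|_\infty^2\le 2\|w\|^2+ \ldots$, or more simply $\|w\|_\infty^2\le C\|w\|_1^2$ together with Poincar\'e–Wirtinger:
\[
|(ww_x,w_t)|\le \tfrac14\|w_t\|^2+\|w\|_\infty^2\|w_x\|^2\le \tfrac14\|w_t\|^2+C\big(\|w_x\|^2+w^2(0,t)\big)\|w_x\|^2 .
\]
This has the Gr\"onwall form $K(t)A(t)$ with $A=\nu\|w_x\|^2+E$ and $K=C(\|w_x\|^2+w^2(0,t))$. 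By Lemma \ref{L2.1}, $\int_0^\infty K\le C\|w_0\|^2$, and this is exactly the source of the factor $e^{C\|w_0\|^2}$.

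The memory term $\rho(z,w_{xt})$ is the main obstacle, since $w_{xt}$ is not controlled. I would first rewrite it as
\[
\rho\frac{d}{dt}(z,w_x)-\rho(z_t,w_x),
\]
and then use \eqref{1.14}, $z_t=w_x-\delta z$, so that $-\rho(z_t,w_x)=-\rho\|w_x\|^2+\rho\delta(z,w_x)$, which is harmless. The total derivative $\rho\frac{d}{dt}(z,w_x)$ is moved to the left. After multiplying by $e^{2\alpha t}$ and integrating, it produces
\[
\rho e^{2\alpha t}(z,w_x)\le \frac{\nu}{2}e^{2\alpha t}\|w_x\|^2+\frac{\rho^2}{2\nu}e^{2\alpha t}\|z\|^2
\]
plus lower-order integrated terms. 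The first piece explains why only $\frac{\nu}{2}\|w_x\|^2$ survives on the left of the statement. The second piece is bounded by $\|w_0\|^2$ through Lemma \ref{L2.1}. Integrated terms such as $2\alpha\rho\int e^{2\alpha s}(z,w_x)$ and $\rho\delta\int e^{2\alpha s}(z,w_x)$ are likewise bounded by Lemma \ref{L2.1}. The value at $t=0$ vanishes because $z(0)=0$. (If time-differentiation is problematic, $\rho(z,w_{xt})$ could alternatively be handled by integrating by parts in $x$ with the boundary values $w_t(i,t)$, but the product-rule route seems cleaner.)

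Finally, collect everything. Absorb the $\|w_t\|^2$ pieces into the left side. Apply the stated Gr\"onwall inequality to $A(t)=e^{2\alpha t}(\frac{\nu}{2}\|w_x\|^2+E(t))-\rho e^{2\alpha t}(z,w_x)+\ldots$. Use $A(0)\le C\|w_0\|_1^2$, noting that $E(0)$ is bounded by $\|w_0\|_1^2$ and powers of it via Sobolev embedding, with constants absorbed into $C$. This gives
\[
e^{2\alpha t}\Big(\frac{\nu}{2}\|w_x\|^2+E(t)\Big)+\int_0^te^{2\alpha s}\|w_t\|^2\,ds\le C\|w_0\|_1^2e^{C\|w_0\|^2},
\]
and multiplying by $e^{-2\alpha t}$ yields the statement. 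The regularity in (A1)–(A2) justifies taking $\phi=w_t$.
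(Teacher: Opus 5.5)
Your proposal is correct and follows the paper's proof essentially step for step. You test with $w_t$, rewrite the memory term using $z_t=w_x-\delta z$ as $-\rho\frac{d}{dt}(z,w_x)+\rho\norm{w_x}^2-\rho\delta(z,w_x)$, multiply by $2e^{2\alpha t}$ and integrate, absorb $\rho e^{2\alpha t}(z,w_x)$ by Young's inequality (which is what leaves $\frac{\nu}{2}\norm{w_x}^2$), and close with Gr\"onwall and Lemma \ref{L2.1}. Two small points, both shared with the paper's own argument: the bound $E(0)\le C\norm{w_0}_1^2$ needs $C$ to depend on $\norm{w_0}_1$ because of the $w_0^4(i)$ terms, and Gr\"onwall is best applied in integral form after the Young absorption (as the paper does), since your $A(t)$ containing the signed term $-\rho e^{2\alpha t}(z,w_x)$ need not be nonnegative.
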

\begin{proof}
	Select \(\phi=w_{t}\) in \eqref{1.13} to obtain
	\begin{align*}
		\frac{\nu}{2}\frac{d}{dt}\norm{w_{x}}^{2}&+\norm{w_{t}}^{2}+ \sum_{i=0}^{1}\Big((c_{i}+w_{d})w(i,t)w_{t}(i,t)+\frac{2}{9c_{i}}w^{3}(i,t)w_{t}(i,t)\Big)\\&-\rho z(0,t)w_{t}(0,t)+\rho z(1,t)w_{t}(1,t)=-w_{d}(w_{x},w_{t})-(w w_{x}, w_{t})+\rho (z_{x}, w_{t}).
	\end{align*}
Using integration by parts in the last term on the right hand side  yields
\begin{align}
	\label{2.11}
	\frac{\nu}{2}\frac{d}{dt}\norm{w_{x}}^{2}+\norm{w_{t}}^{2}+ \sum_{i=0}^{1}\Big((c_{i}+w_{d})w(i,t)w_{t}(i,t)&+\frac{2}{9c_{i}}w^{3}(i,t)w_{t}(i,t)\Big)\nonumber\\&=-w_{d}(w_{x},w_{t})-(w w_{x}, w_{t})-\rho (z, w_{xt}).
\end{align}
Since 
\begin{align*}
 - (z, w_{xt})=-\delta(z,w_{x})-\frac{d}{dt}(z,w_{x})+\norm{w_{x}}^{2},
\end{align*}
 from \eqref{2.11}, we arrive at 
\begin{align*}
	\frac{\nu}{2}\frac{d}{dt}\norm{w_{x}}^{2}&+\norm{w_{t}}^{2}+ \sum_{i=0}^{1}\Big((c_{i}+w_{d})w(i,t)w_{t}(i,t)+\frac{2}{9c_{i}}w^{3}(i,t)w_{t}(i,t)\Big)\nonumber\\&=-w_{d}(w_{x},w_{t})-(w w_{x}, w_{t})-\rho \delta(z,w_{x})-\rho \frac{d}{dt}(z,w_{x})+\rho \norm{w_{x}}^{2}.
\end{align*}
Using the Cauchy-Schwarz inequality and Young's inequality, we deduce that
\begin{align*}
	\frac{1}{2}\frac{d}{dt}\Big(\nu \norm{w_{x}}^{2}+E(t)\Big)+\frac{1}{2}\norm{w_{t}}^{2}\leq C\norm{w_{x}}^{2}+ C\norm{z}^{2}+ C\norm{w}_{\infty}^{2}\norm{w_{x}}^{2}-\rho \frac{d}{dt}(z,w_{x}),
\end{align*}
where \(E(t)=\sum \limits_{i=0}^{1}\Big((c_{i}+w_{d})w^{2}(i,t)+\frac{1}{9c_{i}}w^{4}(i,t)\Big)\).

Multiplying by \(2e^{2\alpha t}\) to the above inequality gives
\begin{align*}
	\frac{d}{dt}\Big(\nu \norm{e^{\alpha t}w_{x}}^{2}+e^{2\alpha t}E(t)\Big)+e^{2\alpha t}\norm{w_{t}}^{2}&\leq Ce^{2\alpha t}\norm{w_{x}}^{2}+ Ce^{2\alpha t}\norm{z}^{2}+ Ce^{2\alpha t}\norm{|w|}^{2}\norm{w_{x}}^{2}
	\\&\quad -\rho \frac{d}{dt}\big(2e^{2\alpha t}(z,w_{x})\big)+2\alpha \rho e^{2\alpha t}(z,w_{x})\\&\quad+2\alpha e^{2\alpha t}\Big(\nu \norm{w_{x}}^{2}+E(t)\Big).
\end{align*}
Integrating with respect to time over \([0,t]\) and using Lemma \ref{L2.1}, we get
\begin{align*}
	\nu \norm{e^{\alpha t}w_{x}}^{2}+e^{2\alpha t}E(t)+\int_{0}^{t}e^{2\alpha s}\norm{w_{t}(s)}^{2}ds&\leq C\norm{w_{0}}_{1}^{2}+ C\int_{0}^{t}e^{2\alpha s}\norm{|w(s)|}^{2}\norm{w_{x}(s)}^{2}ds
	\\&\quad -\rho\big(2e^{2\alpha t}(z,w_{x}).
\end{align*}
Again, using the Cauchy-Schwarz inequality and Young's inequality, it follows that
\begin{align*}
	\frac{\nu }{2}\norm{e^{\alpha t}w_{x}}^{2}+e^{2\alpha t}E(t)+\int_{0}^{t}e^{2\alpha s}\norm{w_{t}(s)}^{2}ds&\leq  C\int_{0}^{t}e^{2\alpha s}\norm{|w(s)|}^{2}\Big(\norm{w_{x}(s)}^{2}+E(s)\Big)ds\\&\quad +Ce^{2\alpha t}\norm{z}^{2}+C\norm{w_{0}}_{1}^{2}.
\end{align*}
Using the Gronwall's inequality with Lemma \ref{L2.1}, we arrive at
\begin{align*}
\frac{\nu }{2}\norm{e^{\alpha t}w_{x}}^{2}+e^{2\alpha t}E(t)+\int_{0}^{t}e^{2\alpha s}\norm{w_{t}(s)}^{2}ds\leq C\norm{w_{0}}_{1}^{2} e^{C\norm{w_{0}}^{2}}.
\end{align*}
This completes the proof after multiplying by \(e^{-2\alpha t}\).
\end{proof}
\begin{lemma}
	\label{L2.3}
	Suppose that the  assumptions \((A1)\) and \((A2)\) are true. Then, there holds
	\begin{align*}
		\norm{w_{x}}^{2}&+\rho\norm{z_{x}}^{2}+\frac{1}{\nu}E(t)+e^{-2\alpha t}\int_{0}^{t}e^{2\alpha s}\big(\nu\norm{w_{xx}(s)}^{2}+\rho\delta \norm{z_{x}(s)}^{2}\big)ds\leq Ce^{-2\alpha t}\norm{w_{0}}_{1}^{2} e^{C\norm{w_{0}}_{1}^{2}}.
	\end{align*}
\end{lemma}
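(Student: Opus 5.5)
The plan is to test the strong form of \eqref{eq:4.1} against $-w_{xx}$ and to differentiate \eqref{eq:4.2} in $x$, then test it against $z_x$. The structural point is that $z_x$ satisfies $z_{xt}+\delta z_x=w_{xx}$. Hence the coupling terms $-\rho(z_x,w_{xx})$ from the $w$-equation and $\rho(w_{xx},z_x)$ from the $\rho$-weighted $z_x$-equation cancel exactly, just as in Lemma~\ref{L2.1}. This is what produces the dissipation $\nu\norm{w_{xx}}^2+\rho\delta\norm{z_x}^2$ on the left.

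Concretely, I would multiply \eqref{eq:4.1} by $-w_{xx}$ and integrate. The term $(w_t,-w_{xx})$ becomes $\frac12\frac{d}{dt}\norm{w_x}^2-[w_xw_t]_0^1$. Substituting the feedback laws \eqref{v0}--\eqref{v1} for $w_x(i,t)$, the boundary contribution yields $\frac{1}{2\nu}\frac{d}{dt}E$-type terms (up to the constant $\frac{2}{9c_i}$ versus $\frac19 c_i^{-1}$ normalization) plus the cross terms $\pm\frac{\rho}{\nu} z(i,t)w_t(i,t)$.

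Adding $\rho$ times the $z_x$-identity gives
\[
\frac12\frac{d}{dt}\Big(\norm{w_x}^2+\rho\norm{z_x}^2+\tfrac1\nu E(t)\Big)+\nu\norm{w_{xx}}^2+\rho\delta\norm{z_x}^2
=(ww_x+w_dw_x,w_{xx})+\text{boundary }z\,w_t\text{ terms}.
\]
The interior nonlinearity is handled by Young's inequality:
\[
|(ww_x,w_{xx})|\le\tfrac{\nu}{4}\norm{w_{xx}}^2+C\norm{w}_\infty^2\norm{w_x}^2,
\]
with $\norm{w}_\infty\le\sqrt2\norm{|w|}$. The term $w_d(w_x,w_{xx})$ is bounded by $\frac\nu4\norm{w_{xx}}^2+C\norm{w_x}^2$.

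The main obstacle is the boundary cross terms $\frac{\rho}{\nu}z(i,t)w_t(i,t)$, where there is no control of $w_t$ at the boundary. I would first try rewriting them as
\[
z\,w_t=\frac{d}{dt}(zw)-z_tw=\frac{d}{dt}(zw)+\delta zw-w_xw .
\]
At the boundary, $w_x(i,t)$ is again given by the feedback law, so every remaining term is a polynomial in $w(i,t)$ and $z(i,t)$. The total derivative $\frac{d}{dt}\big(z(i,t)w(i,t)\big)$ is moved to the left-hand side. After time integration, it is absorbed using
\[
|z(i)w(i)|\le\epsilon z(i)^2+C w(i)^2,\qquad z(i)^2\le C\norm{z}\norm{z}_1
\]
(trace inequality). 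The pieces $\epsilon\rho\norm{z_x}^2$ are absorbed on the left, and the rest is controlled by Lemma~\ref{L2.1} and Lemma~\ref{L2.2}. The quartic-type terms $w^3(i)z(i)$ and $w^4(i)$ are bounded by $E(t)$ times $\norm{|w|}^2$, or by $z(i)^2$ plus $w^6(i)\le C\norm{|w|}^2 w^4(i)$, which is a Gronwall-type factor.

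Finally, I would multiply by $2e^{2\alpha t}$ and treat the extra $2\alpha e^{2\alpha t}(\cdot)$ terms with Lemmas~\ref{L2.1} and~\ref{L2.2}. Integrating over $[0,t]$, noting $z(\cdot,0)=0$, and using the time integrability of $\norm{|w(s)|}^2$ from Lemma~\ref{L2.1}, Gronwall's inequality gives a bound $C\norm{w_0}_1^2e^{C\norm{w_0}_1^2}$; here $\norm{w_0}^2\le\norm{w_0}_1^2$ produces the stated exponent. Multiplying by $e^{-2\alpha t}$ then concludes the proof.
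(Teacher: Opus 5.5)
Your proposal is correct, but it handles the key difficulty differently from the paper.

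\textbf{Common part.} Both arguments test \eqref{eq:4.1} with $-w_{xx}$ and add $\rho$ times the $z_x$-equation, so the coupling terms cancel. Both obtain exactly $\frac{1}{2\nu}\frac{d}{dt}E(t)$ from the feedback laws; there is no normalization mismatch. What remains are the cross terms $\frac{\rho}{\nu}[zw_t]_0^1$, where $[f]_0^1:=f(1,t)-f(0,t)$.

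\textbf{The paper's route.} The paper removes these cross terms with the interior identity \eqref{2.14}, $[zw_t]_0^1=(z_x,w_t)+\delta(z,w_x)+\frac{d}{dt}(z,w_x)-\norm{w_x}^2$. It then pays for $\frac{\rho}{\nu}(z_x,w_t)$ with $C\norm{w_t}^2$. This requires the weighted bound on $\int_0^t e^{2\alpha s}\norm{w_t(s)}^2ds$ from Lemma~\ref{L2.2}, where the same device was already used for $(z,w_{xt})$.

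\textbf{Your route.} You integrate by parts in time at the nodes and use $z_t(i,t)+\delta z(i,t)=w_x(i,t)$ together with the feedback law a second time. This gives
\[
\frac{\rho}{\nu}[zw_t]_0^1=\frac{\rho}{\nu}\frac{d}{dt}[zw]_0^1+\frac{\rho}{\nu^2}\sum_{i=0}^{1}\Big((c_i+w_d)w^2(i,t)+\frac{2}{9c_i}w^4(i,t)\Big)+\Big(\frac{\rho^2}{\nu^2}+\frac{\rho\delta}{\nu}\Big)[zw]_0^1 .
\]
Since this sits on the left-hand side, the $w^2(i,t)$ and $w^4(i,t)$ terms have a favourable sign. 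The $w^3z$ terms you anticipate never arise. Only $z(i,t)w(i,t)$ must be estimated, and your trace inequality, absorption of $\epsilon\norm{z_x}^2$ into the dissipation, and Lemma~\ref{L2.1} handle it.

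\textbf{Comparison.} Your route never touches $w_t$. The lemma then depends essentially only on Lemma~\ref{L2.1}, plus Lemma~\ref{L2.2} or an interpolation inequality for the endpoint value $w^2(i,t)$. The price is that it exploits the specific feedback structure at the boundary. The paper's identity uses only the $z$-equation and integration by parts in $x$.

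\textbf{Two small corrections.} First, the term $2\alpha\rho e^{2\alpha t}\norm{z_x}^2$ created by the exponential weight is not controlled by Lemmas~\ref{L2.1}--\ref{L2.2}. You must absorb it into $2\rho\delta e^{2\alpha t}\norm{z_x}^2$ using $2\alpha\le\delta$. Second, the initial value $E(0)$ contains $w_0^4(i)$, so it contributes $C\norm{w_0}_1^4$. It is this quartic term, via $1+x\le e^{x}$, that makes the factor $e^{C\norm{w_0}_1^2}$ necessary. The comparison $\norm{w_0}\le\norm{w_0}_1$ alone does not account for it, since Gronwall only yields $e^{C\norm{w_0}^2}$.
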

\begin{proof}
 Forming the \(L^{2}\)-inner product between \eqref{eq:4.1} and \(-w_{xx}\), we obtain
 \begin{align}
 	\label{2.12}
 	\nonumber\frac{1}{2}\frac{d}{dt}\norm{w_{x}}^{2}+\nu \norm{w_{xx}}^{2}-w_{t}(1,t)w_{x}(1,t)&+w_{t}(0,t)w_{x}(0,t)
    \\&=(ww_{x}, w_{xx})+w_{d}(w_{x},w_{xx})-\rho(z_{x}, w_{xx}).
 \end{align}
	Differentiating \eqref{eq:4.2} with respect to \(x\) and taking  \(L^{2}\)-inner product with \(z_{x}\) yields
	\begin{align}
	\label{2.13}
	\frac{1}{2}\frac{d}{dt}\norm{z_{x}}^{2}+ \delta \norm{z_{x}}= (z_{x}, w_{xx}).
	\end{align}
Also, we can write from \eqref{eq:4.2}
\begin{align}
	\label{2.14}
	z(1,t)w_{t}(1,t)-z(0,t)w_{t}(0,t)=(z_{x},w_{t})+ \delta(z,w_{x})+\frac{d}{dt}(z,w_{x})-\norm{w_{x}}^{2}.
\end{align}
Therefore, from \eqref{2.12}, \eqref{2.13} and \eqref{2.14}, it follows that
\begin{align*}
	\frac{1}{2}\frac{d}{dt}\Big(\norm{w_{x}}^{2}&+\rho\norm{z_{x}}^{2}+\frac{1}{\nu}E(t)\Big)+\nu\norm{w_{xx}}^{2}+\rho\delta\norm{z_{x}}^{2}\\&= \frac{\rho}{\nu}\norm{w_{x}}^{2}- \frac{\rho}{\nu}\frac{d}{dt}(z,w_{x})- \frac{\rho}{\nu}(z_{x},w_{t})- \frac{\rho \delta}{\nu}(z,w_{x})+(ww_{x},w_{xx})+w_{d}(w_{x},w_{xx}).
\end{align*}
Note that
\begin{align*}
	(ww_{x},w_{xx})&\leq C\norm{w_{x}}^{2}\norm{w}_{1}^{2}+\frac{\nu}{4}\norm{w_{xx}}^{2},\\
	w_{d}(w_{x},w_{xx})&\leq C\norm{w_{x}}^{2}+ \frac{\nu}{4}\norm{w_{xx}}^{2}.
\end{align*}
Using the Cauchy-Schwarz inequality and Young's inequality, we arrive at
\begin{align*}
	\frac{1}{2}\frac{d}{dt}\Big(\norm{w_{x}}^{2}&+\rho\norm{z_{x}}^{2}+\frac{1}{\nu}E(t)\Big)+\frac{\nu}{2}\norm{w_{xx}}^{2}+\frac{\rho\delta}{2}\norm{z_{x}}^{2}\\&\leq C\norm{w_{x}}^{2}+ C\norm{w_{t}}^{2}+C\norm{z}^{2}-\frac{\rho}{\nu}\frac{d}{dt}(z,w_{x})+C\norm{w_{x}}^{2}\norm{w}_{1}^{2}.
\end{align*}
Multiplying by \(2e^{2\alpha t}\) to the above equation yields
\begin{align*}
\frac{d}{dt}\Big(\norm{e^{\alpha t}w_{x}}^{2}&+\rho\norm{e^{\alpha t}z_{x}}^{2}+\frac{1}{\nu}e^{2\alpha t}E(t)\Big)+\nu e^{2\alpha t}\norm{w_{xx}}^{2}+\rho\delta e^{2\alpha t}\norm{z_{x}}^{2}\\&\leq Ce^{2\alpha t}\norm{w_{x}}^{2}+ Ce^{2\alpha t}\norm{w_{t}}^{2}+Ce^{2\alpha t}\norm{z}^{2}-\frac{\rho}{\nu}\frac{d}{dt}\Big(e^{2\alpha t}(z,w_{x})\Big)+\frac{2\alpha \rho}{\nu}e^{2\alpha t}(z,w_{x}) \\&\quad +Ce^{2\alpha t}\norm{w}_{1}^{2}\Big(\norm{w_{x}}^{2}+\rho\norm{z_{x}}^{2}+\frac{1}{\nu}E(t) \Big)\\&\quad + 2\alpha \Big(\norm{e^{2 t}w_{x}}^{2}+\rho\norm{e^{\alpha t}z_{x}}^{2}+\frac{1}{\nu}e^{2\alpha t}E(t)\Big),
\\& \leq Ce^{2\alpha t}\norm{w_{x}}^{2}+ Ce^{2\alpha t}\norm{w_{t}}^{2}+Ce^{2\alpha t}\norm{z}^{2}-\frac{\rho}{\nu}\frac{d}{dt}\Big(e^{2\alpha t}(z,w_{x})\Big)
\\& \quad + Ce^{2\alpha t}\norm{w}_{1}^{2}\Big(\norm{w_{x}}^{2}+\rho\norm{z_{x}}^{2}+\frac{1}{\nu}E(t) \Big).
\end{align*}
Integrating with respect to time over \([0,t]\) and using Lemmas \ref{L2.1}-\ref{L2.2}, we can write
\begin{align*}
	\Big(\norm{e^{\alpha t}w_{x}}^{2}&+\rho\norm{e^{\alpha t}z_{x}}^{2}+\frac{1}{\nu}e^{2\alpha t}E(t)\Big)+\int_{0}^{t}e^{2\alpha s}\big(\nu\norm{w_{xx}(s)}^{2}+\rho\delta \norm{z_{x}(s)}^{2}\big)ds\\&\leq C\norm{w_{0}}_{1}^{2}-e^{2\alpha t}\frac{\rho}{\nu}(z,w_{x})+  C\int_{0}^{t}e^{2\alpha s}\norm{w(s)}_{1}^{2}\Big(\norm{w_{x}(s)}^{2}+\rho\norm{z_{x}(s)}^{2}+\frac{1}{\nu}E(s) \Big)ds.
\end{align*}
Again, using the Cauchy-Schwarz inequality, Young's inequality and Lemmas \ref{L2.1}-\ref{L2.2} with  Gronwall's inequality, we observe that
\begin{align*}
	\norm{e^{\alpha t}w_{x}}^{2}&+\rho\norm{e^{\alpha t}z_{x}}^{2}+\frac{1}{\nu}e^{2\alpha t}E(t)+\int_{0}^{t}e^{2\alpha s}\big(\nu\norm{w_{xx}(s)}^{2}+\rho\delta \norm{z_{x}(s)}^{2}\big)ds\leq C\norm{w_{0}}_{1}^{2} e^{C\norm{w_{0}}_{1}^{2}}.
\end{align*}
The proof is completed after multiplying by \(e^{-2\alpha t}\).
\end{proof}
\begin{remark}
	\label{r2.1}
	From \eqref{eq:4.2}, we arrive at
	\begin{align*}
		\norm{z_{t}}^{2}\leq 2\delta\norm{z}^{2}+2\norm{w_{x}}^{2}.
	\end{align*}
Using Lemmas \ref{L2.1} and \ref{L2.2}, we have
\begin{align*}
	e^{2\alpha t}\norm{z_{t}}^{2}\leq C\norm{w_{0}}_{1}^{2}e^{C\norm{w_{0}}_{1}^{2}}.
\end{align*}
\end{remark}
\begin{lemma}
	\label{L2.4}
	Under the assumptions \((A1)\) and \((A2)\), the following holds
	\begin{align*}
		\norm{w_{t}}^{2}+\rho \norm{z_{t}}^{2}+e^{-2\alpha t}\int_{0}^{t}e^{2\alpha s}\Big(\nu\norm{w_{xt}(s)}^{2}&+2\rho\delta\norm{z_{t}(s)}^{2}+E_{1}(s)\Big)ds\\&\leq C(\norm{w_{0}}_{1})\norm{w_{0}}_{2}^{2}e^{-2\alpha t},
	\end{align*}
where \(E_{1}(t)=\sum \limits_{i=0}^{1}\Big((c_{i}+w_{d})w_{t}^{2}(i,t)+\frac{4}{3c_{i}}w_{t}^{2}(i,t) w^{2}(i,t)\Big)\).
\end{lemma}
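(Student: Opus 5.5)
The approach is to differentiate the weak formulation \eqref{1.13} in time, test with \(w_{t}\), and pair the time-differentiated memory equation \(z_{tt}+\delta z_{t}=w_{xt}\) with \(\rho z_{t}\). The aim is to reproduce, for \((w_{t},z_{t})\), the energy identity behind Lemma \ref{L2.1}. Differentiating \eqref{1.13} in \(t\) and choosing \(\phi=w_{t}\) gives
\begin{align*}
\frac12\frac{d}{dt}\norm{w_{t}}^{2}+\nu\norm{w_{xt}}^{2}+w_{d}(w_{xt},w_{t})+((ww_{x})_{t},w_{t})
+\sum_{i=0}^{1}\Big((c_{i}+w_{d})w_{t}^{2}(i,t)+\frac{2}{3c_{i}}w^{2}(i,t)w_{t}^{2}(i,t)\Big)\\
\mp\rho z_{t}(i,t)w_{t}(i,t)=\rho(z_{xt},w_{t}).
\end{align*}
Integrating \((z_{xt},w_{t})\) by parts produces boundary terms that cancel exactly the \(\pm\rho z_{t}(i,t)w_{t}(i,t)\) terms, leaving \(-\rho(z_{t},w_{xt})\). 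Adding \(\frac{\rho}{2}\frac{d}{dt}\norm{z_{t}}^{2}+\rho\delta\norm{z_{t}}^{2}=\rho(w_{xt},z_{t})\) cancels this cross term, just as in the proof of Lemma \ref{L2.1}. The cubic boundary terms give \(\frac{2}{3c_{i}}w^{2}w_{t}^{2}\ge0\), and together with \((c_{i}+w_{d})w_{t}^{2}\) this yields a positive boundary quantity comparable to \(E_{1}(t)\).

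The remaining terms are estimated as follows.
\begin{itemize}
\item The term \(w_{d}(w_{xt},w_{t})=\frac{w_{d}}{2}(w_{t}^{2}(1)-w_{t}^{2}(0))\) is a pure boundary term. It is absorbed by \((c_{i}+w_{d})w_{t}^{2}(i)\); at \(x=0\) there is an exact margin \(c_{0}+w_{d}/2\), which is what the constant in \(E_{1}\) reflects, possibly after halving.
\item The nonlinear term splits as \(((ww_{x})_{t},w_{t})=(w_{t}w_{x},w_{t})+(ww_{xt},w_{t})\). I bound it by \(\norm{w_{x}}\,\norm{w_{t}}_{\infty}\norm{w_{t}}+\norm{w}_{\infty}\norm{w_{xt}}\,\norm{w_{t}}\).
\item Using \(\norm{w_{t}}_{\infty}\le\sqrt2\norm{|w_{t}|}\), Young's inequality, and the Poincaré–Wirtinger inequality, this is at most \(\frac{\nu}{2}\norm{w_{xt}}^{2}+\frac14E_{1}+C(\norm{w}_{\infty}^{2}+\norm{w_{x}}^{2}+\norm{w_{x}}^{4})\norm{w_{t}}^{2}\).
\end{itemize}
This gives
\[
\frac{d}{dt}\big(\norm{w_{t}}^{2}+\rho\norm{z_{t}}^{2}\big)+\nu\norm{w_{xt}}^{2}+2\rho\delta\norm{z_{t}}^{2}+E_{1}\le K(t)\norm{w_{t}}^{2},
\]
with \(K(t)=C\big(\norm{|w|}^{2}+\norm{w_{x}}^{4}\big)\).

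Next I multiply by \(e^{2\alpha t}\). The extra term \(2\alpha e^{2\alpha t}(\norm{w_{t}}^{2}+\rho\norm{z_{t}}^{2})\) is moved to the right-hand side. It is integrable by Lemma \ref{L2.2}, which controls \(\int e^{2\alpha s}\norm{w_{t}}^{2}\), and by Remark \ref{r2.1} combined with Lemmas \ref{L2.1} and \ref{L2.3}. I then apply Gronwall's inequality. The factor \(\int_{0}^{t}K\,ds\) is bounded by \(C(\norm{w_{0}}_{1})\) uniformly in \(t\) thanks to Lemmas \ref{L2.1}–\ref{L2.3}. The term \(\int e^{2\alpha s}K\norm{w_{t}}^{2}\,ds\) can alternatively be handled by putting \(\sup\norm{w_{x}}^{2}\) into the constant.

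The step I expect to be the main obstacle is the initial value \(\norm{w_{t}(0)}^{2}+\rho\norm{z_{t}(0)}^{2}\), which must be bounded by \(C(\norm{w_{0}}_{1})\norm{w_{0}}_{2}^{2}\). Since \(z(0)=0\), we have \(z_{t}(0)=w_{0x}\), so \(\norm{z_{t}(0)}\le\norm{w_{0}}_{1}\). From \eqref{eq:4.1} at \(t=0\), \(w_{t}(0)=\nu w_{0xx}-w_{0}w_{0x}-w_{d}w_{0x}\). Hence
\[
\norm{w_{t}(0)}\le C\big(\norm{w_{0}}_{2}+\norm{w_{0}}_{1}^{2}\big)\le C(\norm{w_{0}}_{1})\norm{w_{0}}_{2}.
\]
The compatibility conditions (A1) justify evaluating the equation and the boundary conditions at \(t=0\). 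The regularity (A2) justifies the time differentiation of \eqref{1.13} and the boundary traces of \(w_{t}\). Multiplying the final inequality by \(e^{-2\alpha t}\) gives the stated bound.
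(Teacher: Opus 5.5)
Your proposal is correct and follows essentially the same route as the paper. You differentiate \eqref{1.13} in time and test with $w_{t}$, and you pair the time-differentiated memory equation with $\rho z_{t}$ so that the boundary and cross terms cancel. You then absorb $((ww_{x})_{t},w_{t})$ via a trace-plus-$\norm{w_{xt}}$ bound on $\norm{w_{t}}_{\infty}$, control the weighted integrals by Lemmas \ref{L2.1}--\ref{L2.2} and Remark \ref{r2.1}, and bound $\norm{w_{t}(0)}$ from \eqref{eq:4.1} at $t=0$ (with $z_{t}(0)=w_{0x}$); the paper does exactly this, except that it skips Gronwall and uses $\sup_{t}\norm{w}_{1}^{2}\leq C$ together with Lemma \ref{L2.2}'s bound on $\int_{0}^{t}e^{2\alpha s}\norm{w_{t}(s)}^{2}ds$ — the alternative you mention yourself.
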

\begin{proof}
	Differentiating \eqref{1.13} with respect to time $t$ and setting \(\phi=w_{t}\) yields
	\begin{align}
		\label{2.15}
		\frac{1}{2}\frac{d}{dt}\norm{w_{t}}^{2}+\nu\norm{w_{xt}}^{2}&+\frac{w_{d}}{2}\big(w_{t}^{2}(1,t)-w_{t}^{2}(0,t)\big)+ \sum_{i=0}^{1}\Big((c_{i}+w_{d})w_{t}^{2}(i,t)+\frac{2}{3c_{i}}w_{t}^{2}(i,t)w^{2}(i,t)\Big)
		\nonumber\\&+\rho z_{t}(1,t)w_{t}(1,t)-\rho z_{t}(0,t)w_{t}(0,t)=-(w_{t}w_{x}+ww_{xt}, w_{t})+\rho(z_{xt},w_{t}).
	\end{align}
Also, differentiating \eqref{1.14}  with respect to time and forming $L^{2}$-inner product with \(z_{t}\) gives
\begin{align}
		\label{2.16}
	\frac{1}{2}\frac{d}{dt}\norm{z_{t}}^{2}+\delta\norm{z_{t}}^{2}=w_{t}(1,t)z_{t}(1,t)-w_{t}(0,t)z_{t}(0,t)-(z_{xt},w_{t}).
\end{align}
Multiplying \eqref{2.16} by \(\rho\) and using \eqref{2.15}, we get
\begin{align}
	\label{2.17}		\frac{1}{2}\frac{d}{dt}\Big(\norm{w_{t}}^{2}+\rho\norm{z_{t}}^{2}\Big)&+\nu\norm{w_{xt}}^{2}+\rho\delta\norm{z_{t}}^{2}+\Big((c_{0}+\frac{w_{d}}{2})w_{t}^{2}(0,t)+\frac{2}{3c_{0}}w_{t}^{2}(0,t)w^{2}(0,t)\Big)
	\nonumber\\&+ \Big((c_{1}+\frac{3w_{d}}{2})w_{t}^{2}(1,t)+\frac{2}{3c_{1}}w_{t}^{2}(1,t)w^{2}(1,t)\Big)=-(w_{t}w_{x}+w w_{xt}, w_{t}).
\end{align}
The term on the right hand side is estimated by
\begin{align*}
	-(w_{t}w_{x}+w w_{xt}, w_{t})&\leq \norm{w_{t}}_{\infty}\norm{w_{x}}\norm{w_{t}}+\norm{w}_{\infty}\norm{w_{xt}}\norm{w_{t}},
	\\&\leq(|w_{t}(0,t)|+\norm{w_{xt}}) \norm{w_{x}}\norm{w_{t}}+ C\norm{w}_{1}\norm{w_{xt}}\norm{w_{t}},
	\\&\leq \frac{c_{0}}{2}w_{t}^{2}(0,t)+\frac{\nu}{2}\norm{w_{xt}}^{2}+ C\norm{w}_{1}^{2}\norm{w_{t}}^{2}.
\end{align*}
 From \eqref{2.17}, it follows that
\begin{align*}
   \frac{1}{2}\frac{d}{dt}\Big(\norm{w_{t}}^{2}+\rho\norm{z_{t}}^{2}\Big)&+\frac{\nu}{2}\norm{w_{xt}}^{2}+\rho\delta\norm{z_{t}}^{2}+\frac{1}{2}\Big((c_{0}+w_{d})w_{t}^{2}(0,t)+\frac{4}{3c_{0}}w_{t}^{2}(0,t)w^{2}(0,t)\Big)
   \nonumber\\&+\frac{1}{2} \Big((c_{1}+w_{d})w_{t}^{2}(1,t)+\frac{4}{3c_{1}}w_{t}^{2}(1,t)w^{2}(1,t)\Big)\leq C\norm{w}_{1}^{2}\norm{w_{t}}^{2}.
\end{align*}
Multiplying by \(2e^{2\alpha t}\) to the above inequality and integrating with respect to time from \(0\) to \(t\), and then using Lemmas \ref{L2.1}-\ref{L2.2} together with Remark \ref{r2.1}, we obtain
\begin{align}
	\label{2.18}
	e^{2\alpha t} \norm{w_{t}}^{2}+\rho e^{2\alpha t}\norm{z_{t}}^{2}+\int_{0}^{t}e^{2\alpha s}\Big(\nu\norm{w_{xt}(s)}^{2}&+2\rho\delta\norm{z_{t}(s)}^{2}+E_{1}(s)\Big)ds
	\nonumber\\&\leq \norm{w_{t}(0)}^{2}+\rho\norm{z_{t}(0)} +C\norm{w_{0}}_{1}^{2},
\end{align}
where $E_{1}(t)=\sum\limits_{i=0}^{1}\Big((c_{i}+w_{d})w_{t}^{2}(i,t)+\frac{4}{3c_{i}}w_{t}^{2}(i,t) w^{2}(i,t)\Big)$.

From \eqref{eq:4.1}, we observe that
\begin{align*}
	\norm{w_{t}}^{2}\leq C(\norm{w_{xx}}^{2}+\norm{w_{x}}^{2}+\norm{z_{x}}^{2}+\norm{w}_{1}^{2}\norm{w_{x}}^{2}).
\end{align*}
 Substituting \(t=0\) to the above inequality yields
\begin{align*}
	\norm{w_{t}(0)}^{2}\leq C(\norm{w_{0}}_{2}^{2}+\norm{w_{0}}_{1}^{4}).
\end{align*}
Using \eqref{eq:4.2}, we arrive at
\begin{align*}
	\norm{z_{t}(0)}^{2}\leq 2\norm{w_{0}}_{1}^{2}.
\end{align*}
Consequently, the proof is completed after substituting the value of \(\norm{w_{t}(0)}^{2} \) and multiplying by \(e^{-2\alpha t}\) in the resulting inequality.
\end{proof}
\begin{lemma}
	\label{L2.5}
	Under assumptions \((A1)\) and \((A2)\), there exists a positive constant $C$ such that
	\begin{align*}
			 \norm{w_{xx}}^{2}\leq C(\norm{w_{0}}_{1})e^{-2\alpha t} \norm{w_{0}}_{2}^{2}e^{C\norm{w_{0}}^{2}}.
	\end{align*}
\end{lemma}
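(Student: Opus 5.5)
We propose to obtain the pointwise-in-time $H^{2}$ bound directly from the equation \eqref{eq:4.1}, viewed as an elliptic relation for $w$ at each fixed $t$, rather than through a new energy identity. Solving \eqref{eq:4.1} for the second derivative gives
\begin{align*}
\nu w_{xx}=w_{t}+ww_{x}+w_{d}w_{x}-\rho z_{x}.
\end{align*}
Taking $L^{2}$-norms, we get
\begin{align*}
\nu^{2}\norm{w_{xx}}^{2}\leq 4\Big(\norm{w_{t}}^{2}+\norm{w}_{\infty}^{2}\norm{w_{x}}^{2}+w_{d}^{2}\norm{w_{x}}^{2}+\rho^{2}\norm{z_{x}}^{2}\Big).
\end{align*}
Each term on the right-hand side will then be controlled by the decay estimates already proved.

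The steps are as follows.
\begin{itemize}
\item The term $\norm{w_{t}}^{2}$ is bounded by Lemma \ref{L2.4}, which gives $C(\norm{w_{0}}_{1})\norm{w_{0}}_{2}^{2}e^{-2\alpha t}$.
\item The term $w_{d}^{2}\norm{w_{x}}^{2}$ is bounded by Lemma \ref{L2.2}, which gives $C\norm{w_{0}}_{1}^{2}e^{C\norm{w_{0}}^{2}}e^{-2\alpha t}$.
\item The memory term $\rho^{2}\norm{z_{x}}^{2}$ is bounded by Lemma \ref{L2.3}, which gives $Ce^{-2\alpha t}\norm{w_{0}}_{1}^{2}e^{C\norm{w_{0}}_{1}^{2}}$.
\item For the nonlinear term we use the Poincar\'e--Wirtinger inequality $\norm{w}_{\infty}^{2}\leq 2\norm{|w|}^{2}=2\big(\norm{w_{x}}^{2}+w^{2}(0,t)+w^{2}(1,t)\big)$. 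The boundary values are controlled by $E(t)$, so Lemma \ref{L2.2} gives $\norm{w}_{\infty}^{2}\leq C(\norm{w_{0}}_{1})$ uniformly in $t$. Multiplying by the decaying bound for $\norm{w_{x}}^{2}$ again yields a factor $e^{-2\alpha t}$.
\end{itemize}
Finally, since $\norm{w_{0}}_{1}\leq\norm{w_{0}}_{2}$, we collect all terms into the form $C(\norm{w_{0}}_{1})e^{-2\alpha t}\norm{w_{0}}_{2}^{2}e^{C\norm{w_{0}}^{2}}$. Any factor such as $e^{C\norm{w_{0}}_{1}^{2}}$ produced by Lemma \ref{L2.3} is absorbed into $C(\norm{w_{0}}_{1})$.

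The only delicate point is the nonlinear term. A product of two decaying quantities could produce $e^{-4\alpha t}$ or a quartic dependence on the data, and we need to keep the bound at exactly the stated rate. We handle this by using only the uniform boundedness of $\norm{w}_{\infty}$ on one factor, with the decay coming from the other factor. The resulting powers of $\norm{w_{0}}_{1}$ are absorbed into the constant $C(\norm{w_{0}}_{1})$. Assumptions (A1)--(A2) enter only to guarantee that $w_{xx}\in L^{2}$ pointwise in time and that the identity holds a.e.\ in $x$, so the equation may be used strongly.
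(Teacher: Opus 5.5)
Your proposal is correct and follows essentially the same route as the paper: the paper also isolates $\nu w_{xx}$ from \eqref{eq:4.1}, applies the triangle inequality, and bounds $\norm{w_{t}}^{2}$, $\norm{w_{x}}^{2}$, the nonlinear term $\norm{w}_{1}^{2}\norm{w_{x}}^{2}$ and $\norm{z_{x}}^{2}$ by the already established decay estimates. The only difference is bookkeeping: the paper cites Lemmas \ref{L2.1}, \ref{L2.3} and \ref{L2.4}, while you use Lemma \ref{L2.2} to control $\norm{w_{x}}$ and the boundary values in $\norm{w}_{\infty}$, which is inessential.
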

\begin{proof}
	From \eqref{eq:4.1}, we have
	\begin{align*}
	 \nu w_{xx}(x,t)=w_t(x,t) + w(x,t)w_x(x,t) + w_dw_x(x,t)- \rho z_x(x,t).
	\end{align*}
A use of the triangle inequality gives
\begin{align*}
	\nu \norm{w_{xx}}^{2}\leq C(\norm{w_{t}}^{2}+ \norm{w_{x}}^{2}+ \norm{w}_{1}^{1}\norm{w_{x}}^{2}+\norm{z_{x}}^{2}).
\end{align*}
Using Lemmas \ref{L2.1}, \ref{L2.3}, and \ref{L2.4}, we obtain
\begin{align*}
	\nu \norm{w_{xx}}^{2}\leq Ce^{-2\alpha t}\big( \norm{w_{0}}_{2}^{2}+\norm{w_{0}}_{1}^{2}\big)e^{C\norm{w_{0}}^{2}}.
\end{align*}
\end{proof}
\begin{lemma}
	\label{L2.6}
	Let the assumptions \((A1)\)- \((A2)\) hold. Then, there exists a positive constant $C$ such that
	\begin{align*}
	\norm{w_{xt}}^{2}+\rho\norm{z_{xt}}^{2}+ \frac{2}{\nu}E_{2}(t)&+2\nu e^{-2\alpha t} \int_{0}^{t}e^{2\alpha s}\norm{w_{xxt}(s)}^{2}ds+ 4\rho \delta e^{-2\alpha t} \int_{0}^{t} e^{2\alpha s}\norm{z_{xt}(s)}^{2} ds
	\\&\leq  e^{-2\alpha t}C(\norm{w_{0}}_{1})\norm{w_{0}}_{3}^{2},
	\end{align*}
where \(E_{2}(t)=\sum\limits_{i=0}^{1}\Big((c_{i}+w_{d})w_{t}^{2}(i,t)+\frac{2}{3c_{i}}w_{t}^{2}(i,t) w^{2}(i,t)\Big)\).
\end{lemma}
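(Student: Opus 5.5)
The estimate is the time-differentiated analogue of Lemma \ref{L2.3}, so I would follow that proof after differentiating the system \eqref{eq:4.1}--\eqref{eq:4.2} in time. Set $\tilde w=w_t$ and $\tilde z=z_t$. Then
\[
\tilde w_t-\nu\tilde w_{xx}+(w\tilde w)_x+w_d\tilde w_x=\rho\tilde z_x,\qquad \tilde z_t+\delta\tilde z=\tilde w_x .
\]
The boundary conditions are obtained by differentiating \eqref{v0}--\eqref{v1} in time:
\[
\nu w_{xt}(0,t)=(c_0+w_d)w_t(0,t)+\tfrac{2}{3c_0}w^2(0,t)w_t(0,t)-\rho z_t(0,t),
\]
and the analogous formula holds at $x=1$. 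Assumption (A1) supplies compatibility at $t=0$, and (A2) justifies the manipulations for $w_0\in H^3$.

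First, take the $L^2$ inner product of the $\tilde w$ equation with $-w_{xxt}$. The term $(w_{tt},-w_{xxt})$ gives $\tfrac12\frac{d}{dt}\norm{w_{xt}}^2$ together with the boundary terms $-w_{tt}w_{xt}\big|_0^1$. Substituting the differentiated feedback laws into these boundary terms produces $\frac{1}{2\nu}\frac{d}{dt}E_2(t)$, up to lower-order pieces such as $\frac{2}{3c_i}w\,w_t^3(i,t)$ coming from $\frac{d}{dt}(w^2)$, plus the traces $\frac{\rho}{\nu}z_t(i,t)w_{tt}(i,t)$. Next, differentiate the $\tilde z$ equation in $x$ and test it with $\rho z_{xt}$. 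This gives $\tfrac{\rho}{2}\frac{d}{dt}\norm{z_{xt}}^2+\rho\delta\norm{z_{xt}}^2=\rho(z_{xt},w_{xxt})$, which cancels the memory term $-\rho(z_{xt},w_{xxt})$ from the first equation. To handle the traces $z_t(i,t)w_{tt}(i,t)$, I would use the time derivative of identity \eqref{2.14}:
\[
z_tw_{tt}\big|_0^1=(z_{xt},w_{tt})+\delta(z_t,w_{xt})+\tfrac{d}{dt}(z_t,w_{xt})-\norm{w_{xt}}^2 .
\]
The term $(z_{xt},w_{tt})$ is bounded after replacing $w_{tt}$ through the differentiated PDE, and $\frac{d}{dt}(z_t,w_{xt})$ is kept as an exact derivative. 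These are the same manipulations as in Lemma \ref{L2.3}.

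The nonlinear terms are $(w_tw_x+ww_{xt},w_{xxt})$ and $w_d(w_{xt},w_{xxt})$. I would bound them by $\frac{\nu}{4}\norm{w_{xxt}}^2+C\norm{w}_1^2\norm{w_{xt}}^2+C\norm{w_t}_\infty^2\norm{w_x}^2$, and control $\norm{w_t}_\infty$ by $|w_t(0,t)|+\norm{w_{xt}}$. The result is an inequality of the form
\[
\tfrac{d}{dt}\Big(\norm{w_{xt}}^2+\rho\norm{z_{xt}}^2+\tfrac{2}{\nu}E_2\Big)+\nu\norm{w_{xxt}}^2+2\rho\delta\norm{z_{xt}}^2\le C\big(\norm{w_{xt}}^2+\norm{w_t}^2+\norm{z_t}^2+E_1\big)+C\norm{w}_1^2(\cdots)-\tfrac{2\rho}{\nu}\tfrac{d}{dt}(z_t,w_{xt}).
\]
I then multiply by $e^{2\alpha t}$ and integrate over $[0,t]$. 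The integrals of the right-hand side are controlled by Lemma \ref{L2.4} and Remark \ref{r2.1}. The term $e^{2\alpha t}(z_t,w_{xt})$ is absorbed into the left side using Young's inequality, at the cost of $\norm{z_t}^2$, which Lemma \ref{L2.4} already bounds. The coefficient $\norm{w}_1^2$ is integrable by Lemmas \ref{L2.1}--\ref{L2.3}, so Gronwall's inequality closes the argument. The initial data $\norm{w_{xt}(0)}^2$ and $\norm{z_{xt}(0)}^2=\norm{w_{0xx}}^2$ are estimated from the PDE at $t=0$ by $C(\norm{w_0}_1)\norm{w_0}_3^2$, and multiplying the final inequality by $e^{-2\alpha t}$ finishes the proof.

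I expect the boundary terms to be the main obstacle, in particular the time derivative of the cubic feedback. It produces $w\,w_t^3$ at the endpoints, and these must be absorbed either into $E_2$ and $E_1$ or by $C|w(i)|\,|w_t(i)|\,E_2$ with a Gronwall factor $\norm{w}_\infty$. To do this, I would first use the pointwise bounds on $w(i,t)$ from Lemma \ref{L2.2}.
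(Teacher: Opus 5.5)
Your proposal is correct and follows essentially the paper's proof. As in the paper, you differentiate in time, test with $-w_{xxt}$ and with $\rho z_{xt}$ so the memory cross-terms cancel, and rewrite the traces $\frac{\rho}{\nu}z_t w_{tt}\big|_0^1$ via the analogue of \eqref{2.14}; you then bound the cubic boundary term $w\,w_t^3(i,t)$ through $E_1$ and $E_2$, weight by $e^{2\alpha t}$, and close with Gronwall and Lemma \ref{L2.4}. The only difference is that you eliminate $(z_{xt},w_{tt})$ by substituting the differentiated PDE, where $\rho(z_{xt},w_{xxt})$ combines harmlessly with $\nu\norm{w_{xxt}}^2+\frac{\rho^2}{\nu}\norm{z_{xt}}^2$, whereas the paper integrates by parts in time as in \eqref{2.161}, which produces the extra exact derivative $\frac{d}{dt}(w_t,z_{xt})$; both routes work.
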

\begin{proof}
   Differentiating \eqref{eq:4.1} with respect to time and then forming the \(L^{2}\)-inner product with \(-w_{xxt}\) yields
   \begin{align}
   	\label{2.141}
   	\frac{1}{2}\frac{d}{dt}\norm{w_{xt}}^{2}+ \nu \norm{w_{xxt}}^{2}&+ \frac{1}{\nu}\sum_{i=0}^{1}\Big((c_{i}+w_{d})w_{t}(i,t)w_{tt}(i,t)+\frac{2}{3c_{i}}w^{2}(i,t)w_{t}(i,t)w_{tt}(i,t)\Big)
   \nonumber\\& +\frac{\rho}{\nu}z_{t}(1,t)w_{tt}(1,t)-\frac{\rho}{\nu}z_{t}(0,t)w_{tt}(0,t)
   \nonumber\\&	=-\rho (z_{xt},w_{xxt})+(w_{t}w_{x}+ww_{xt}, w_{xxt})+w_{d}(w_{xt}, w_{xxt}).
   \end{align}
After differentiating \eqref{eq:4.2} with respect to $t$ and \(x\), we can write
\begin{align}
	\label{2.151}
	\frac{1}{2}\frac{d}{dt}\norm{z_{xt}}^{2}+\delta\norm{z_{xt}}^{2}=(z_{xt},w_{xxt}),
\end{align}
and 
\begin{align}
	\label{2.161}
	z_{t}(1)w_{tt}(1,t)-z_{t}(0)w_{tt}(0,t)&=\frac{d}{dt}(z_{tt},z_{t})-\norm{z_{tt}}^{2}+\delta(z_{tt},z_{t})+\frac{d}{dt}(w_{t},z_{xt})
	\nonumber\\&\quad -\delta^{2}(z_{x},w_{t})+\delta(w_{t},w_{xx})-(w_{t},w_{xxt}).
\end{align}
Multiplying \eqref{2.151} by \(\rho\), it follows  from \eqref{2.141} and \eqref{2.161} 
\begin{align*}
	\frac{1}{2}\frac{d}{dt}\Big(\norm{w_{xt}}^{2}+\rho\norm{z_{xt}}^{2}+ \frac{1}{\nu}E_{2}(t)\Big)&+ \nu \norm{w_{xxt}}^{2}+ \rho \delta\norm{z_{xt}}^{2} 
	\nonumber\\&=\sum_{i=0}^{1}\frac{1}{3c_{i}\nu}\Big(w(i,t)w_{t}^{3}(i,t)\Big)+(w_{t}w_{x}+ww_{xt}, w_{xxt})
	\nonumber\\& \quad +w_{d}(w_{xt}, w_{xxt})- \frac{\rho}{\nu}\frac{d}{dt}(z_{tt},z_{t})+\frac{\rho}{\nu}\norm{z_{tt}}^{2}-\frac{\rho \delta}{\nu}(z_{tt},z_{t})\nonumber\\&\quad-\frac{\rho}{\nu}\frac{d}{dt}(w_{t},z_{xt})
	 +\frac{\rho \delta^{2}}{\nu}(z_{x},w_{t})-\frac{\rho \delta}{\nu}(w_{t},w_{xx})+\frac{\rho }{\nu}(w_{t},w_{xxt}).
\end{align*}
Using Young's inequality to the above equation, we arrive at
\begin{align*}
	\frac{1}{2}\frac{d}{dt}\Big(\norm{w_{xt}}^{2}+\rho\norm{z_{xt}}^{2}+ \frac{1}{\nu}E_{2}(t)\Big)&+ \frac{\nu}{2} \norm{w_{xxt}}^{2}+ \rho \delta\norm{z_{xt}}^{2} 
	\nonumber\\&\leq \sum_{i=0}^{1}\frac{1}{3c_{i}\nu}\Big(w^{2}(i,t)w_{t}^{2}(i,t)+w_{t}^{4}(i,t)\Big)+C\norm{w_{xt}}^{2}\norm{|w|}^{2}
	\nonumber\\&\quad +Cw_{t}^{2}(0,t)\norm{w_{xt}}^{2}+C\norm{w_{xt}}^{2}
	- \frac{\rho}{\nu}\frac{d}{dt}(z_{tt},z_{t})-\frac{\rho}{\nu}\frac{d}{dt}(w_{t},z_{xt})
	\nonumber\\&\quad+C\norm{z_{tt}}^{2}+ C\Big(\norm{z_{t}}^{2}+\norm{w_{t}}^{2}+\norm{w_{xx}}^{2}+\norm{z_{x}}^{2}\Big).	
\end{align*}
Multiplying by \(2e^{2\alpha t}\) to the above inequality and integrating with respect to time over \([0,t]\), we observe that
\begin{align}
	\label{2.171}
 e^{2\alpha t}\Big(\norm{w_{xt}}^{2}&+\rho\norm{z_{xt}}^{2}+ \frac{1}{\nu}E_{2}(t)\Big)+\nu \int_{0}^{t}e^{2\alpha s}\norm{w_{xxt}(s)}^{2}ds+ 2\rho \delta \int_{0}^{t} e^{2\alpha s}\norm{z_{xt}(s)}^{2} ds
 \nonumber\\&\leq 2\alpha \int_{0}^{t} e^{2\alpha s}\Big(\norm{w_{xt}(s)}^{2}+\rho\norm{z_{xt}(s)}^{2}+ \frac{1}{\nu}E_{2}(s)\Big)ds+C\int_{0}^{t} e^{2\alpha s}\norm{w_{xt}(s)}^{2}\norm{|w(s)|}^{2}ds
 \nonumber\\& \quad + \int_{0}^{t} e^{2\alpha s}\sum_{i=0}^{1}\frac{1}{3c_{i}\nu}\Big(w^{2}(i,s)w_{t}^{2}(i,s)+w_{t}^{4}(i,s)\Big)ds +C \int_{0}^{t} e^{2\alpha s}w_{t}^{2}(0,s)\norm{w_{xt}(s)}^{2}ds
 \nonumber\\&\quad + C\int_{0}^{t} e^{2\alpha s}\Big(\norm{w_{xt}(s)}^{2}+\norm{z_{tt}(s)}^{2}+\norm{z_{t}(s)}^{2}+\norm{w_{t}(s)}^{2}+\norm{w_{xx}(s)}^{2}+\norm{z_{x}(s)}^{2}\Big)ds
 \nonumber\\&\quad -\frac{\rho e^{2\alpha t}}{\nu}(z_{tt},z_{t})-\frac{\rho e^{2\alpha t}}{\nu}(w_{t},z_{xt})+C\int_{0}^{t} e^{2\alpha s}\norm{z_{xt}(s)}^{2}ds+ \norm{w_{xt}(0)}^{2}+\norm{z_{xt}(0)}^{2}.
\end{align}
Note that \[\norm{z_{tt}}^{2}\leq 2\delta^{2}\norm{z_{t}}^{2}+2\norm{w_{xt}}^{2}.
\]
From \eqref{eq:4.1} and \eqref{eq:4.2}, we can write after substituting \(t=0\)
\[\norm{w_{xt}(0)}^{2}+\norm{z_{xt}(0)}^{2}\leq C\Big(\norm{w_{0}}_{3}^{2}+\norm{w_{0}}_{1}^{2}\norm{w_{0}}_{2}^{2}+\norm{w_{0}}_{1}^{2} \Big).
\]
Using the Cauchy-Schwarz inequality and Young's inequality, we have from \eqref{2.171}
\begin{align*}
	e^{2\alpha t}\Big(\frac{1}{2}\norm{w_{xt}}^{2}&+\frac{\rho}{2}\norm{z_{xt}}^{2}+ \frac{1}{\nu}E_{2}(t)\Big)+\nu \int_{0}^{t}e^{2\alpha s}\norm{w_{xxt}(s)}^{2}ds+ 2\rho \delta \int_{0}^{t} e^{2\alpha s}\norm{z_{xt}(s)}^{2} ds
	\nonumber\\&\leq C \int_{0}^{t} e^{2\alpha s}\Big(\norm{w_{xt}(s)}^{2}+\rho\norm{z_{xt}(s)}^{2}+ \frac{1}{\nu}E_{2}(s)\Big)\Big(\norm{|w(s)|}^{2}+ w_{t}^{2}(0,s)+w_{t}^{2}(1,s)\Big)ds
	\\&\quad +C \int_{0}^{t} e^{2\alpha s}w_{t}^{2}(0,s)\norm{w_{xt}(s)}^{2}ds+ C\int_{0}^{t} e^{2\alpha s}\Big(\norm{w_{xt}(s)}^{2}+\norm{z_{t}(s)}^{2}+\norm{w_{t}(s)}^{2}\Big)ds
	\\& \quad +C\int_{0}^{t} e^{2\alpha s}\Big(\norm{w_{xx}(s)}^{2}+\norm{z_{x}(s)}^{2}+\norm{z_{xt}(s)}^{2}\Big)ds+C\Big(\norm{z_{t}}^{2} +\norm{w_{t}}^{2}\Big)
	\\&\quad + C\Big(\norm{w_{0}}_{3}^{2}+\norm{w_{0}}_{1}^{2}\norm{w_{0}}_{2}^{2}\Big).
\end{align*}
With the help of Gronwall's inequality and Lemmas \ref{L2.2}-\ref{L2.4}, the proof is completed after multiplying by \(2e^{-2\alpha t}\).
\end{proof}
The proof of the following theorem follows from Lemmas \ref{L2.1}-\ref{L2.6}.
\begin{theorem}
\label{thm2}
    Suppose that the assumptions \((A1)\)- \((A2)\) are true. Then, the following holds
   \begin{align*}   \norm{w}_{2}^{2}+\norm{w_{t}}_{1}^{2}+\norm{z}^{2}&+E(t)+E_{2}(t)
   +e^{-2\alpha t}\int_{0}^{t}e^{2\alpha s}\Big(\norm{|w(s)|}^{2}+\norm{w_{t}(s)}_{2}^{2}+\norm{w(s)}_{2}^{2}\Big)ds\\&+e^{-2\alpha t}\int_{0}^{t}e^{2\alpha s}\Big(\norm{z(s)}^{2}+E_{1}(s)+w^{4}(1,s)+w^{4}(0,s)\Big)ds\leq Ce^{-2\alpha t}\norm{w_{0}}_{3}^{2}.
   \end{align*} 
\end{theorem}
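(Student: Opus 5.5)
The plan is to assemble Theorem \ref{thm2} term by term from Lemmas \ref{L2.1}--\ref{L2.6}, multiplying each bound by $e^{2\alpha t}$ where needed. Each right-hand side has the form $C(\norm{w_{0}}_{1})\norm{w_{0}}_{k}^{2}e^{C\norm{w_{0}}_{1}^{2}}e^{-2\alpha t}$ with $k\le 3$. These will be absorbed into $C\norm{w_{0}}_{3}^{2}e^{-2\alpha t}$, with the convention (as elsewhere in the paper) that $C$ may depend on $\norm{w_{0}}_{1}$. Throughout, $\norm{w_{0}}_{k}\le\norm{w_{0}}_{3}$ for $k\le 3$.

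\textbf{Pointwise-in-time terms.}
\begin{itemize}
\item $\norm{w}^{2}$ and $\norm{z}^{2}$ come from Lemma \ref{L2.1}.
\item $\norm{w_{x}}^{2}$ and $E(t)$ come from Lemmas \ref{L2.2} and \ref{L2.3}.
\item $\norm{w_{xx}}^{2}$ comes from Lemma \ref{L2.5}.
\item $\norm{w_{t}}^{2}$ comes from Lemma \ref{L2.4}, and $\norm{w_{xt}}^{2}$ and $E_{2}(t)$ from Lemma \ref{L2.6}.
\end{itemize}
Adding these gives $\norm{w}_{2}^{2}+\norm{w_{t}}_{1}^{2}+\norm{z}^{2}+E(t)+E_{2}(t)\le Ce^{-2\alpha t}\norm{w_{0}}_{3}^{2}$.

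\textbf{Time-integrated terms.}
\begin{itemize}
\item $\int_{0}^{t}e^{2\alpha s}\big(\norm{w_{x}}^{2}+w^{2}(0,s)+w^{2}(1,s)\big)ds$, i.e.\ $\int e^{2\alpha s}\norm{|w|}^{2}$, together with $\int e^{2\alpha s}\norm{z}^{2}$ and $\int e^{2\alpha s}\big(w^{4}(0,s)+w^{4}(1,s)\big)$, come from Lemma \ref{L2.1}. Its bound $\norm{w_{0}}^{2}$ does not decay, so this part is multiplied by $e^{-2\alpha t}$ directly.
\item $\int e^{2\alpha s}\norm{w}_{2}^{2}$ is split as $\norm{w}^{2}+\norm{w_{x}}^{2}+\norm{w_{xx}}^{2}$. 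The $L^{2}$ part follows from Poincaré--Wirtinger and the boundary terms of Lemma \ref{L2.1}; the $w_{xx}$ part comes from Lemma \ref{L2.3}.
\item $\int e^{2\alpha s}\norm{w_{t}}_{2}^{2}$ uses $\int e^{2\alpha s}\norm{w_{t}}^{2}$ from Lemma \ref{L2.2}, $\int e^{2\alpha s}\norm{w_{xt}}^{2}$ and $\int e^{2\alpha s}E_{1}$ from Lemma \ref{L2.4}, and $\int e^{2\alpha s}\norm{w_{xxt}}^{2}$ from Lemma \ref{L2.6}.
\end{itemize}
Summing and multiplying by $e^{-2\alpha t}$ gives the stated estimate.

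\textbf{Main obstacle: consistent bookkeeping of the constants.} Lemmas \ref{L2.2}, \ref{L2.3} and \ref{L2.5} carry factors such as $e^{C\norm{w_{0}}^{2}}$ or $e^{C\norm{w_{0}}_{1}^{2}}$, and Lemma \ref{L2.4} has $\norm{w_{0}}_{2}^{2}$ with an extra $\norm{w_{0}}_{1}^{4}$ hidden in $\norm{w_{t}(0)}^{2}$. I would first state that all constants are allowed to depend on $\norm{w_{0}}_{1}$, hence on $\norm{w_{0}}_{3}$. I would then bound each such factor by $C(\norm{w_{0}}_{1})$. Finally, I would check that the assumption (A2) and the compatibility conditions (A1), which enter through $w_{xt}(0)$ and $z_{xt}(0)$ in Lemma \ref{L2.6}, are exactly what make the $H^{3}$ norm the correct right-hand side.
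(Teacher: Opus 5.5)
Your proposal is correct and matches the paper's argument: the paper says only that Theorem \ref{thm2} follows from Lemmas \ref{L2.1}--\ref{L2.6}, and your term-by-term assembly, including the Poincar\'e--Wirtinger step for the $L^{2}$ part of $\int_{0}^{t}e^{2\alpha s}\norm{w(s)}_{2}^{2}ds$, makes that bookkeeping explicit. You are also right that the generic constant $C$ must depend on $\norm{w_{0}}_{1}$, because of the factors $e^{C\norm{w_{0}}_{1}^{2}}$ in Lemmas \ref{L2.2}, \ref{L2.3} and \ref{L2.5}; this is consistent with the paper later writing $C=C(\norm{w_{0}}_{3})$.
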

\subsection{Adaptive Controller.}
In this section, we discuss adaptive feedback control using the Lyapunov approach, when the diffusion coefficient \(\nu \) is unknown. In the previous section, global stabilization is achieved for known $\nu$. The proof of adaptive feedback control for the viscous Burgers' equation $(\rho=0)$ is given in \cite{liu2001adaptive}.

We consider the Lyapunov functional
\begin{align}
	\label{3.1}
	V=\norm{w}^{2}+\rho\norm{z}^{2}+\frac{\nu}{\gamma}\big(\eta_{0}-\frac{1}{2\nu}\big)^{2}+ \frac{\nu}{\gamma}\big(\eta_{1}-\frac{1}{2\nu}\big)^{2},
\end{align}
where \(\gamma\) is a positive constant. In addition, $\eta_{0}$ and $\eta_{1}$ are used as estimates of $\frac{1}{2\nu}$.

Differentiating \eqref{3.1} with respect to time, we obtain
\begin{align}
	\dot{V}=2\int_{0}^{1}\big(ww_{t}+\rho zz_{t}\big)dx+ \frac{2\nu}{\gamma}\big(\eta_{0}-\frac{1}{2\nu}\big)\dot{\eta_{0}}+ \frac{2\nu}{\gamma}\big(\eta_{1}-\frac{1}{2\nu}\big)\dot{\eta_{1}}.
\end{align}
Using integration by parts, we get from \eqref{eq:4.1}-\eqref{eq:4.2}
\begin{align*}
	\dot{V}&=2\nu w(1,t)w_{x}(1,t)-2\nu w(0,t)w_{x}(0,t)-2\nu\norm{w_{x}}^{2}-2\rho \delta\norm{z}^{2}-\frac{w_{d}}{2}\Big(w^{2}(1,t)-w^{2}(0,t)\Big)
	\\&\quad -\frac{1}{3}\Big(w^{3}(1,t)-w^{3}(0,t)\Big)+2\rho z(1,t)w(1,t)-2\rho z(0,t)w(0,t)+ \frac{2\nu}{\gamma}\big(\eta_{0}-\frac{1}{2\nu}\big)\dot{\eta_{0}}+ \frac{2\nu}{\gamma}\big(\eta_{1}-\frac{1}{2\nu}\big)\dot{\eta_{1}}.
\end{align*}
Using the Cauchy-Schwarz inequality and Young's inequality, we observe that
\begin{align*}
	\dot{V}&\leq -2\nu\norm{w_{x}}^{2}-2\rho \delta\norm{z}^{2}- 2\nu w(0,t)w_{x}(0,t)+ w(0,t) \Big((c_{0}+w_{d})w(0,t)+\frac{2}{9c_{0}}w^{3}(0,t)\Big)
	\\&\quad +2\nu w(1,t)w_{x}(1,t)+ w(1,t)\Big((c_{1}+w_{d})w(1,t)+\frac{2}{9c_{1}}w^{3}(1,t)\Big)+2\rho z(1,t)w(1,t)
	\\&\quad -2\rho z(0,t)w(0,t)
	 + \frac{2\nu}{\gamma}\big(\eta_{0}-\frac{1}{2\nu}\big)\dot{\eta_{0}}+ \frac{2\nu}{\gamma}\big(\eta_{1}-\frac{1}{2\nu}\big)\dot{\eta_{1}}-\frac{w_d}{2}w^2(0,t)-\frac{c_{0}}{2} w^2(0,t)
	 \\&\quad-\frac{3w_d}{2}w^2(1,t) - \frac{c_{1}}{2} w^{2}(1,t).
\end{align*}
Therefore, it follows that
\begin{align*}
     \dot{V}&\leq -2\nu\norm{w_{x}}^{2}-2\rho \delta\norm{z}^{2}- 2\nu w(0,t)w_{x}(0,t)+2\nu w(1,t)w_{x}(1,t)
     \\&\quad + 2\nu(\frac{1}{2\nu}-\eta_{0}+\eta_{0})\Big((c_{0}+w_{d})w^{2}(0,t)+\frac{2}{9c_{0}}w^{4}(0,t)-2\rho z(0,t)w(0,t)\Big)
     \\&\quad + 2\nu(\frac{1}{2\nu}-\eta_{1}+\eta_{1})\Big((c_{1}+w_{d})w^{2}(1,t)+\frac{2}{9c_{1}}w^{4}(1,t)+2\rho z(1,t)w(1,t)\Big)
     \\&\quad
     + \frac{2\nu}{\gamma}\big(\eta_{0}-\frac{1}{2\nu}\big)\dot{\eta_{0}}+ \frac{2\nu}{\gamma}\big(\eta_{1}-\frac{1}{2\nu}\big)\dot{\eta_{1}}-\frac{w_d}{2}w^2(0,t)-\frac{c_{0}}{2} w^2(0,t)
     \\&\quad-\frac{3w_d}{2}w^2(1,t) - \frac{c_{1}}{2} w^{2}(1,t).  
\end{align*}
Rewrite to the above inequality
\begin{align}
	\label{3.3}
	\dot{V}&\leq -2\nu\norm{w_{x}}^{2}-2\rho \delta\norm{z}^{2}-\frac{w_d}{2}w^2(0,t)-\frac{c_{0}}{2} w^2(0,t)
	-\frac{3w_d}{2}w^2(1,t) - \frac{c_{1}}{2} w^{2}(1,t)
	\nonumber\\&\quad-2\nu w(0,t)\Big( V_{0}(t)-\eta_{0}\big((c_{0}+w_{d})w(0,t)+\frac{2}{9c_{0}}w^{3}(0,t)-2\rho z(0,t)\big) \Big)
	\nonumber\\&\quad +2\nu w(1,t)\Big( V_{1}(t)+\eta_{1}\big((c_{1}+w_{d})w(1,t)+\frac{2}{9c_{1}}w^{3}(1,t)+2\rho z(1,t)\big) \Big)
   \nonumber\\&\quad + 2\nu(\frac{1}{2\nu}-\eta_{0})\Big((c_{0}+w_{d})w^{2}(0,t)+\frac{2}{9c_{0}}w^{4}(0,t)-2\rho z(0,t)w(0,t)-\frac{\dot{\eta_{0}}}{\gamma}\Big)
	\nonumber\\&\quad + 2\nu(\frac{1}{2\nu}-\eta_{1})\Big((c_{1}+w_{d})w^{2}(1,t)+\frac{2}{9c_{1}}w^{4}(1,t)+2\rho z(1,t)w(1,t)-\frac{\dot{\eta_{1}}}{\gamma}\Big).
\end{align}
Now, we choose the adaptive feedback control
\begin{align*}
	 V_{0}(t)=\eta_{0}\big((c_{0}+w_{d})w(0,t)+\frac{2}{9c_{0}}w^{3}(0,t)-2\rho z(0,t)\big),\\
	 V_{1}(t)=-\eta_{1}\big((c_{1}+w_{d})w(1,t)+\frac{2}{9c_{1}}w^{3}(1,t)+2\rho z(1,t)\big),\\
	\dot{\eta_{0}}=\gamma \Big((c_{0}+w_{d})w^{2}(0,t)+\frac{2}{9c_{0}}w^{4}(0,t)-2\rho z(0,t)w(0,t)\Big),\\
	\dot{\eta_{1}}=\gamma\Big((c_{1}+w_{d})w^{2}(1,t)+\frac{2}{9c_{1}}w^{4}(1,t)+2\rho z(1,t)w(1,t)\Big),
\end{align*}
with $\eta_{0}(0)=\eta_{0}^{0}$ and $\eta_{1}(0)=\eta_{1}^{0}.$
Hence, with this control, we arrive at from \eqref{3.3}
\begin{align*}
	\dot{V}&\leq -2\nu\norm{w_{x}}^{2}-2\rho \delta\norm{z}^{2}-\frac{w_d}{2}w^2(0,t)-\frac{c_{0}}{2} w^2(0,t)
	-\frac{3w_d}{2}w^2(1,t) - \frac{c_{1}}{2} w^{2}(1,t),
	\\&\leq -\min\{2\nu, 2\delta, \frac{(c_{0}+w_{d})}{2}, \frac{(c_{1}+3w_{d})}{2}  \}V.
\end{align*}
where \(\nu, \delta,\) and \(\rho\) are positive constants. Therefore, we obtain stability in the $L^{2}$-norm.
\section{Existence and Uniqueness.}
\label{3}
This section provides the existence and uniqueness of the weak solution to problem \eqref{eq:4.1}-\eqref{eq:4.8} following \cite{MR4019979}. We use the Faedo-Galerkin approximation with boundary conditions
\begin{align}
	w_{x}(0,t)-k_{0} w(0,t)+ \frac{\rho }{\nu}z(0,t)= \frac{2}{9c_{0} \nu}w^{3}(0,t),
\end{align}
and \begin{align}
	w_{x}(1,t)+k_{1} w(1,t)- \frac{\rho }{\nu}z(1,t)= -\frac{2}{9c_{1} \nu}w^{3}(1,t),
\end{align}
where \(k_{i}=\frac{(c_{i}+w_{d})}{\nu}, i=0,1\). Let \(A:= -\frac{d^{2}}{dx^{2}}\) be a linear operator on \(L^{2}(0,1)\) with dense domain \(D(A)\), where 
 \[ D(A)=\{v\in H^{2}(0,1): v_{x}(0)-k_{0} v(0)+\frac{\rho }{\nu}z(0)=0, \ \text{and}  \ v_{x}(1)+k_{1} v(1)-\frac{\rho }{\nu}z(1)=0 \}.
 \]
Moreover, \(k_{0}+k_{1}>0\) and \(A\) is a self-adjoint positive definite with compact inverse,  since \(H^{1}(0,1)\hookrightarrow L^{2}(0,1)\) is compact. Then by spectral theorem, there exists a sequence of eigenvalues of \(A\), \(0<\lambda_{1}<\lambda_{2}<\ldots<\lambda_{k}<\ldots\) and \(\lim_{k\rightarrow\infty} \lambda_{k}=\infty\). Further, \((\phi_{k})_{k=1}^{\infty}\) forms an orthonormal basis of \(L^{2}(0,1)\), which consists of the eigenfunction of \(A\).

Let \((\phi_{k})_{k=1}^{\infty}\) be an orthogonal basis of \(H^{1}(0,1)\). Let \(m\) be a positive integer, and suppose that \(V_{m}\) is the space spanned by \(\{\phi_{1},\ldots,\phi_{m}\}\). Now for a positive integer \(m\), we seek a function \(w^{m}:[0,T]\rightarrow H^{1}(0,1)\) such that \[w^{m}(t)=\sum_{i=1}^{m}\alpha_{i}^{m}(t)\phi_{i}(x).\] The coefficients \(\alpha_{i}^{m}(t)\), \(0\leq t\leq T\) are chosen such that \(\alpha_{i}^{m}(0)=(w_{0}, \phi_{i}), i=1,2,\ldots,m\) and \(w^{m}\) satisfying 
\begin{align}
	\label{4.31}
 	(w_{t}^{m},\phi_{i}) &+\nu (w_{x}^{m},\phi_{ix})+w_{d}(w_{x}^{m},\phi_{i}) +(w^{m}w_{x}^{m},\phi_{i})+ \bigg((c_{0}+w_{d})w^{m}(0,t)+  \frac{2}{9c_{0}}(w^{m})^{3}(0,t)\bigg)\phi_{i}(0)\nonumber\\&-\rho z^{m}(0)\phi_{i}(0) +\left((c_{1}+w_{d})w^{m}(1,t)+ \frac{2}{9c_{1}}(w^{m})^{3}(1,t)+\rho z^{m}(1)\right)\phi_{i}(1)=\rho (z_{x}^{m}, \phi_{i}),	
\end{align}
where \(z^{m}(x,t)= \int_{0}^{t}e^{-\delta(t-s)}w_{x}^{m}(x,s)ds\), and 
\begin{align}
	\label{4.41}
	z_{t}^{m}+\delta z^{m}=w_{x}^{m}.
\end{align}
Multiplying \eqref{4.31} by \(\alpha_{i}^{m}(t)\) and summing from \(i=1,2,\ldots,m\), we obtain from \eqref{4.31}
\begin{align*}
	\frac{1}{2}\frac{d}{dt}\norm{w^{m}}^{2} &+\nu \norm{w_{x}^{m}}^{2}+w_{d}(w_{x}^{m},w^{m}) +(w^{m}w_{x}^{m},w^{m})+ \bigg((c_{0}+w_{d})w^{m}(0,t)+  \frac{2}{9c_{0}}(w^{m})^{3}(0,t)\bigg)w^{m}(0,t)\nonumber\\&-\rho z^{m}(0)w^{m}(0,t) +\left((c_{1}+w_{d})w^{m}(1,t)+ \frac{2}{9c_{1}}(w^{m})^{3}(1,t)+\rho z^{m}(1)\right)w^{m}(1,t)=\rho (z_{x}^{m}, w^{m}).	
\end{align*}
Following the proof of Lemma \ref{L2.1} with \(\alpha=0\) yields
\begin{align*}
	\norm{w^{m}}^{2}+\rho\norm{z^{m}}^{2}&+\beta_{0} \int_{0}^{t}\left(\norm{w_{x}^{m}(s)}^{2}+\norm{z^{m}(s)}^{2}+(w^{m})^{2}(0,s)+(w^{m})^{2}(1,s) \right)ds \nonumber\\&+ \frac{1}{3c_{0}}\int_{0}^{t}(w^{m})^{4}(0,s)ds +\frac{1}{3c_{1}}\int_{0}^{t}e^{2\alpha s}(w^{m})^{4}(1,s)ds\leq C\norm{w_{0}}^{2},
\end{align*}
where $ 0<\beta_{0}= \min\{2\nu, 2\rho\delta, (c_{0}+w_{d}), (c_{1}+w_{d})\}$ and \(\norm{w^{m}(0)}\leq C\norm{w_{0}}\).
Therefore, using Lemmas \ref{L2.2}--\ref{L2.6}, we obtain the uniformly bounds of the sequences \((w_{t}^{m}), (w^{m}_{xx}), (w_{xt}^{m}), (z^{m})\),  \((E(w^{m})), (E_{1}(w^{m}))  \) and \((E_{2}(w^{m})),\) where \(E_{i}(w^{m})=E_{i}(t),\  i=0,1,2\).

As {\it{a priori}} bounds are uniformly bounded and do not depend on $m$. Hence, by using the Banach–Alaoglu theorem, there exists a subsequence \(w^{m_{n}}\) of \(w^{m}\) such that 
\begin{align}
	\label{4.51}
	\begin{cases}
		w^{m_{n}}\rightarrow w \quad \text{weakly * in} \quad  L^{\infty}(0, T; H^{2}(0,1) ),\\
		w_{t}^{m_{n}}\rightarrow w_{t} \quad \text{weakly * in} \quad L^{\infty}(0, T; H^{1}(0,1) ), \\
		z^{m_{n}}\rightarrow z \quad \text{weakly * in} \quad L^{\infty}(0, T; L^{2}(0,1) ), \\
		w^{m_{n}}\rightarrow w \quad \text{weakly in} \quad  L^{2}(0, T; H^{2}(0,1)), \\
		w_{t}^{m_{n}}\rightarrow w_{t} \quad \text{weakly in} \quad L^{2}(0, T; H^{2}(0,1)),		
	\end{cases}
\end{align}
as \(n\rightarrow \infty.\) 

Since \(H^{1}(0,1)\subset L^{2}(0,1)\) is a compact embedding \cite[Chapter 5]{MR1625845}, then from the Aubin-Lions compactness lemma, the following holds strongly
\begin{align}
	\label{4.61}
	y^{m_{n}}\rightarrow y, \ \text{in} \ L^{2}(0,T,L^{2}(0,1)),
\end{align} 
as \(n\rightarrow \infty.\)
We choose a function \(v\in C^{1}(0,T, H^{1}(0,1))\) of the form \[v(t)=\sum_{i=1}^{N}d_{i}^{m}(t)\phi_{i}(x),\] where \(d_{i}^{m}(t)\) are given smooth function, and  \(N\) is a fixed positive integer. We choose \(m\geq N\). Multiplying by  \(d_{i}^{m}(t)\) in \eqref{4.31}, summing from \(i=1\) to $N$ and integrating with respect to time from \(0\) to \(T\), we have
\begin{align}
	\label{4.71}
	\int_{0}^{T}\Big[(w_{t}^{m},v)&+\nu (w_{x}^{m},v)+w_{d}(w_{x}^{m},v) +(w^{m}w_{x}^{m},v)+ \bigg((c_{0}+w_{d})w^{m}(0,t)+  \frac{2}{9c_{0}}(w^{m})^{3}(0,t)\bigg)v(0)\nonumber\\&-\rho z^{m}(0)v(0) +\left((c_{1}+w_{d})w^{m}(1,t)+ \frac{2}{9c_{1}}(w^{m})^{3}(1,t)+\rho z^{m}(1)\right)v(1)\Big]dt=\rho\int_{0}^{T} (z_{x}^{m}, v)dt.	
\end{align}
Choosing \(m=m_{n}\) and taking \(n\rightarrow \infty\) in \eqref{4.71}, and then using \eqref{4.51} and \eqref{4.61}, we get 
\begin{align}
	\label{4.81}
	\int_{0}^{T}\Big[(w_{t},v)&+\nu (w_{x},v)+w_{d}(w_{x},v) +(ww_{x},v)+ \bigg((c_{0}+w_{d})w(0,t)+  \frac{2}{9c_{0}}w^{3}(0,t)\bigg)v(0)\nonumber\\&-\rho z(0)v(0) +\left((c_{1}+w_{d})w(1,t)+ \frac{2}{9c_{1}}w^{3}(1,t)+\rho z(1)\right)v(1)\Big]dt=\rho\int_{0}^{T} (z_{x}, v)dt,	
\end{align}
provided we have
\[(w^{m_{n}}w_{x}^{m_{n}}, v)\rightarrow (ww_{x}, v),
\]
\[\sum_{i=0}^{1}(w^{m_{n}})^{2}(i,t) w_{t}^{m_{n}}(i,t)\rightarrow \sum_{i=0}^{1}w^{2}(i,t) w_{t}(i,t),
\]
and \[\sum_{i=0}^{1}(w^{m_{n}})^{3}(i,t) w^{m_{n}}(i,t)\rightarrow \sum_{i=0}^{1}w^{3}(i,t) w(i,t),
\] as \(n\rightarrow \infty, \) (see \cite{MR4019979} for more details).

For the proof of \(w(0)=w_{0}\) see \cite{MR4019979}, and for the proof of uniqueness see \cite{MR3790146}.

\section{Finite Element Approximation.}
\label{4}
This section contains the semi-discrete scheme of the continuous problem \eqref{1.13} and exponential bounds of the semi-discrete scheme. Moreover, we establish error estimates for the state variable and feedback control laws.

For any positive integer \(N\), suppose that \(p=\{0=x_{0}<x_{1}<\cdots< x_{N}=1\}\) is the partition of the interval \([0,1]\). Let \(I_{j}=(x_{j-1}, x_{j}), 1\leq j\leq N,\) be the sub-intervals of the partition \(p\) with mesh length  \(h_{j}=x_{j}-x_{j-1}\). Assume that mesh parameter $h=\max\limits_{1\leq j\leq N}h_{j}$.
We construct a finite dimensional subspace of \(H^{1}\) as
\[ S_{h}=\{v_{h}\in C^{0}([0,1]),\  v_{h}|_{I_{j}}\in P_{1}(I_{j}), 1\leq j\leq N\},
\]
where \(P_{1}\) is a polynomial of degree at most one on each \(I_{j}, 1\leq j\leq N.\)

Now, the semi-discrete finite element scheme of the continuous problem \eqref{1.13} is to find \(w_{h}\in S_{h}\) such that
\begin{align}
	\label{4.1}
	\nonumber(w_{ht},\phi) +\nu (w_{hx},\phi_{x})+w_{d}(w_{hx},\phi) +(w_{h}w_{hx},\phi)+ \left((c_{0}+w_{d})w_{h}(0,t)+  \frac{2}{9c_{0}}w_{h}^{3}(0,t)-\rho z_{h}(0)\right)\phi(0)\\ +\left((c_{1}+w_{d})w_{h}(1,t)+ \frac{2}{9c_{1}}w_{h}^{3}(1,t)+\rho z_{h}(1)\right)\phi(1)=\rho (z_{hx}, \phi), \hspace{5mm} \forall \  \phi \in S_{h},
\end{align}
and set \(z_{h}=\int_{0}^{t}e^{-\delta(t-s)}w_{hx}(s)ds\), we can write
\begin{align}
	\label{4.2}
	z_{ht}+\delta z_{h} = w_{hx},
\end{align}
with $ w_{h}(x,0)=w_{0h}(x)$ and $ z_{h}(x,0)=0,$ where \(w_{0h}\) is an approximation of \(w_{0}\) in \(S_{h}\).

Since the space \(S_{h}\) is finite dimensional, the system \eqref{4.1} represents a non-linear system of ordinary differential equations. Therefore, using Picard's theorem, there exists a solution \(w_{h}(t)\) for some \(t\in(0, t^{*})\), where \(0\leq t^{*}\leq T.\) Hence, for global existence $\forall t>0$, we apply the following lemma, which shows the global stabilization for the semi-discrete scheme \eqref{4.1}.
\begin{lemma}
		\label{L4.1}
		Under  assumptions \((A1)\)- \((A2)\), there exists a decay \( 0<\alpha \leq \frac{1}{2}\min\{\nu, \delta,  c_{0}+w_{d}, c_{1}+w_{d}\}\) such that
		\begin{align*}
			\norm{w_{h}}^{2}&+\rho\norm{z_{h}}^{2}+\beta e^{-2\alpha t} \int_{0}^{t}e^{2\alpha s}\left(\norm{w_{hx}(s)}^{2}+\rho\norm{z_{h}(s)}^{2}+w_{h}^{2}(0,s)+w_{h}^{2}(1,s) \right)ds \nonumber\\&+ \frac{e^{-2\alpha t}}{3c_{0}}\int_{0}^{t}e^{2\alpha s}w_{h}^{4}(0,s)ds +\frac{e^{-2\alpha t}}{3c_{1}}\int_{0}^{t}e^{2\alpha s}w_{h}^{4}(1,s)ds\leq e^{-2\alpha t} \norm{w_{0}}^{2},
		\end{align*}
		where $ 0<\beta= \min\{(\nu-2\alpha), (\delta-2\alpha), (c_{0}+w_{d}-2\alpha), (c_{1}+w_{d}-2\alpha)   \}.$  
	\end{lemma}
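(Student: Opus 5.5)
The plan is to repeat the energy argument of Lemma \ref{L2.1} at the discrete level. The discrete problem \eqref{4.1}--\eqref{4.2} has the same structure as the weak formulation \eqref{1.13}--\eqref{1.14}, and $w_h(t)$ itself is an admissible test function in $S_h$.

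\textbf{Step 1: the discrete energy identity.} Set $\phi=w_h$ in \eqref{4.1} and take the $L^2$-inner product of \eqref{4.2} with $\rho z_h$. Since $w_h\in S_h\subset H^1$, we have $z_h=\int_0^t e^{-\delta(t-s)}w_{hx}(s)\,ds\in L^2$. On each element $z_h$ is a time integral of piecewise constants, so the term $(z_{hx},w_h)$ has to be read through the integration by parts already used to pass from \eqref{1.13} to \eqref{6.1}, namely $\rho(z_{hx},w_h)=\rho\big(z_h(1)w_h(1)-z_h(0)w_h(0)\big)-\rho(z_h,w_{hx})$. With this reading the boundary terms $\mp\rho z_h(i)w_h(i)$ cancel exactly. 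The term $-\rho(z_h,w_{hx})$ then cancels against $\rho(w_{hx},z_h)$ coming from \eqref{4.2}, just as in \eqref{1.18}.

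\textbf{Step 2: the boundary terms.} The transport terms give $w_d(w_{hx},w_h)=\frac{w_d}{2}\big(w_h^2(1)-w_h^2(0)\big)$ and $(w_hw_{hx},w_h)=\frac13\big(w_h^3(1)-w_h^3(0)\big)$, both exact for piecewise linear continuous functions. The result is the discrete analogue of \eqref{1.18}. Bound the cubic boundary terms by Young's inequality exactly as in the proof of Lemma \ref{L2.1}, namely $\frac13|w_h|^3(i)\le \frac{c_i}{2}w_h^2(i)+\frac{1}{18c_i}w_h^4(i)$. The leftover coefficients are $\frac{2}{9c_i}-\frac{1}{18c_i}=\frac{1}{6c_i}$, which is at least $\frac{1}{6c_i}$ as needed for the quartic terms.

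\textbf{Step 3: the exponential weight.} Multiply by $2e^{2\alpha t}$. The negative term $-2\alpha e^{2\alpha t}\big(\norm{w_h}^2+\rho\norm{z_h}^2\big)$ is absorbed using Poincar\'e--Wirtinger, $\norm{w_h}^2\le 2\norm{w_{hx}}^2+w_h^2(i)$, together with the damping $2\rho\delta\norm{z_h}^2$. This uses the restriction $\alpha\le\frac12\min\{\nu,\delta,c_0+w_d,c_1+w_d\}$ and produces the constant $\beta$. Integrate over $(0,t)$, multiply by $e^{-2\alpha t}$, and use $z_h(0)=0$.

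\textbf{Step 4: initial data and global existence.} The right-hand side is $\norm{w_{0h}}^2$. To replace it by $\norm{w_0}^2$, choose $w_{0h}$ to be the $L^2$-projection of $w_0$ onto $S_h$, which satisfies $\norm{w_{0h}}\le\norm{w_0}$. The bound shows that $\norm{w_h(t)}$ stays bounded on the maximal interval $(0,t^*)$. Since $S_h$ is finite dimensional, all norms on it are equivalent, so the Picard solution cannot blow up and extends to all $t>0$. This is why the lemma is stated before global existence is asserted.

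\textbf{Main obstacle.} The delicate point is Step 1: the treatment of $z_{hx}$ and the memory coupling. $w_{hx}$ is only piecewise constant, so $z_{hx}$ is not a function but carries jump contributions at the nodes. I would handle this by working from the outset with the equivalent form \eqref{6.1}, that is the term $\rho(z_h,\phi_x)$ with no boundary $z_h$-terms, as the definition of the scheme. Then the cancellation with $\rho(w_{hx},z_h)$ from \eqref{4.2} is immediate and no regularity of $z_h$ beyond $L^2$ is needed.
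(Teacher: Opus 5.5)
Your proof is correct and follows the paper's route; the paper's own proof is just ``similar to Lemma~\ref{L2.1},'' and your Steps 1--4 fill in what it omits. Those steps are: testing with \(w_h\); replacing the boundary \(z_h\)-terms and \(\rho(z_{hx},\phi)\) in \eqref{4.1} by the single term \(\rho(z_h,\phi_x)\) of \eqref{6.1}, so the memory coupling cancels against \(\rho(w_{hx},z_h)\) from \eqref{4.2}; the same Young and Poincar\'e--Wirtinger bounds; and the finite-dimensional continuation argument. One caveat on Step 4: the paper later fixes \(w_{0h}=\tilde{w}_{h}(0)\), the Ritz--Volterra projection at \(t=0\), to get \(\theta(0)=0\), and for that choice \(\norm{w_{0h}}\leq\norm{w_{0}}\) is not automatic (one only has \(\nu\norm{w_{0hx}}^{2}+\lambda\norm{w_{0h}}^{2}\leq\nu\norm{w_{0x}}^{2}+\lambda\norm{w_{0}}^{2}\)), so with the paper's initial datum the right-hand side should read \(e^{-2\alpha t}\norm{w_{0h}}^{2}\), which is harmless for its later use.
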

\begin{proof}
The proof is similar to Lemma \ref{L2.1}.
\end{proof}

\subsection{Error Estimates.}
This subsection deals with the error estimates for both the state variable and control inputs.

Here, we take the projection \cite{Yapping1990} in the following form:

\begin{align}
	\label{4.4}
	\nu\Big((w-\tilde{w}_{h})_{x}, \phi_{x}\Big)+\rho\Big(\int_{0}^{t}e^{-\delta(t-s)}\big(w(s)-\tilde{w}_{h}(s)\big)_{x}ds, \phi_{x}\Big)+\lambda\Big(w-\tilde{w}_{h}, \phi\Big)=0, \quad \forall \phi\in S_{h},
\end{align}
where \(\tilde{w}_{h}:[0, T]\rightarrow S_{h}\). This type of projection is known as the Ritz-Volterra projection. The existence and uniqueness of this projection has been shown in \cite{Yapping1990}. Set \(\eta = w-\tilde{w}_{h}\). The proof of the following lemma is given in \cite{ doi:10.1137/0727036, Yapping1990}.
\begin{lemma}
	\label{L4.2}
	Let \(w\in H^{2}(0,1)\) and \(w_{t}\in L^{2}(0,T; H^{2}(0,1)\cap H^{1}(0,1))\). Then the following holds
\[
	  \norm{\eta}_{j}\leq Ch^{2-j}\Big(\norm{w}_{2}+\int_{0}^{t}\norm{w(s)}_{2}\Big),\]
	  and 
	  \[\norm{\eta_{t}}_{j}\leq Ch^{2-j}\Big(\norm{w_{t}}_{2}+\int_{0}^{t}\norm{w_{t}(s)}_{2}\Big), 
	\]
where  \( j=0,1.\)
\end{lemma}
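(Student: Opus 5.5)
We would prove Lemma \ref{L4.2} by the standard Ritz--Volterra argument: compare \(\tilde{w}_{h}\) with the ordinary elliptic (Ritz) projection and treat the memory term as a Volterra perturbation handled by Gronwall. Let \(R_{h}w\in S_{h}\) be defined by \(\nu((w-R_{h}w)_{x},\phi_{x})+\lambda(w-R_{h}w,\phi)=0\) for all \(\phi\in S_{h}\), with \(\lambda>0\) so the form is coercive on \(H^{1}(0,1)\) without boundary constraints. The usual C\'ea plus Aubin--Nitsche argument on \([0,1]\) with piecewise linears gives \(\norm{v-R_{h}v}_{j}\leq Ch^{2-j}\norm{v}_{2}\) for \(j=0,1\); the dual problem \(-\nu\psi_{xx}+\lambda\psi=g\), \(\psi_{x}(0)=\psi_{x}(1)=0\), is \(H^{2}\)-regular. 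Then split \(\eta=(w-R_{h}w)+(R_{h}w-\tilde{w}_{h})=:\xi+\theta\), where \(\theta\in S_{h}\).

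First we get the \(H^{1}\) bound. Subtracting the definitions gives, for all \(\phi\in S_{h}\),
\[\nu(\theta_{x},\phi_{x})+\lambda(\theta,\phi)=-\rho\Big(\int_{0}^{t}e^{-\delta(t-s)}\eta_{x}(s)ds,\phi_{x}\Big).\]
Taking \(\phi=\theta\) yields \(\norm{\theta}_{1}\leq C\int_{0}^{t}(\norm{\xi(s)}_{1}+\norm{\theta(s)}_{1})ds\), and Gronwall gives \(\norm{\theta}_{1}\leq C\int_{0}^{t}\norm{\xi(s)}_{1}ds\leq Ch\int_{0}^{t}\norm{w(s)}_{2}ds\) on bounded intervals. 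With the bound for \(\xi\), this gives the case \(j=1\).

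The \(L^{2}\) case (\(j=0\)) is the main obstacle, because a naive bound on the memory term only gives \(O(h)\). We would use duality on the whole error. For fixed \(t\), let \(\psi\) solve \(-\nu\psi_{xx}+\lambda\psi=\eta(t)\) with Neumann data, so \(\norm{\psi}_{2}\leq C\norm{\eta}\). Then
\[\norm{\eta}^{2}=\nu(\eta_{x},\psi_{x})+\lambda(\eta,\psi).\]
Subtract the defining relation \eqref{4.4} with \(\phi=I_{h}\psi\), the interpolant. What remains are terms \(\nu(\eta_{x},(\psi-I_{h}\psi)_{x})\) and \(\lambda(\eta,\psi-I_{h}\psi)\), both bounded by \(Ch\norm{\eta}_{1}\norm{\psi}_{2}\), plus the memory term \(\rho(\int_{0}^{t}e^{-\delta(t-s)}\eta_{x}(s)ds,\psi_{x})\) with \(\phi=\psi\). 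Integrating by parts in \(x\) (using \(\psi_{x}=0\) at the endpoints) turns the memory term into \(-\rho\int_{0}^{t}e^{-\delta(t-s)}(\eta(s),\psi_{xx})ds\), which is bounded by \(C\int_{0}^{t}\norm{\eta(s)}ds\,\norm{\psi}_{2}\). This gives
\[\norm{\eta}\leq Ch\norm{\eta}_{1}+C\int_{0}^{t}\norm{\eta(s)}ds,\]
and combining with the \(H^{1}\) bound and applying Gronwall yields the \(O(h^{2})\) estimate.

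For \(\eta_{t}\), we differentiate \eqref{4.4} in time:
\[\nu(\eta_{tx},\phi_{x})+\lambda(\eta_{t},\phi)+\rho(\eta_{x}(t),\phi_{x})-\rho\delta\Big(\int_{0}^{t}e^{-\delta(t-s)}\eta_{x}(s)ds,\phi_{x}\Big)=0.\]
So \(\eta_{t}\) satisfies the same kind of relation as \(\eta\), with extra forcing terms involving \(\eta\) itself. Repeating the two steps above, with the already-proved bounds for \(\eta\) controlling those terms, gives the second estimate. Along the way the \(w\)-terms are dominated by \(w_{t}\)-terms through \(w(t)=w(0)+\int_{0}^{t}w_{t}(s)ds\), possibly with an added \(\norm{w(0)}_{2}\) contribution absorbed into \(C\).
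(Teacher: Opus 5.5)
Your argument for $\eta$ is correct. It is the standard Ritz--Volterra proof from the references the paper cites; the paper gives no proof of its own. Your $L^{2}$ duality step (Neumann dual problem, memory term moved onto $\psi_{xx}$ by parts, then Gronwall) is the same device the paper uses for the boundary values in Lemma \ref{L4.3}. There is one bookkeeping slip. Subtracting \eqref{4.4} with $\phi=I_{h}\psi$ leaves the memory term tested against $(I_{h}\psi)_{x}$, not $\psi_{x}$. You therefore also get the piece $\rho\big(\int_{0}^{t}e^{-\delta(t-s)}\eta_{x}(s)ds,(\psi-I_{h}\psi)_{x}\big)\leq Ch\int_{0}^{t}\norm{\eta(s)}_{1}ds\,\norm{\psi}_{2}$, which your $H^{1}$ bound makes $O(h^{2})$.

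The genuine gap is your last sentence. The forcing terms in the differentiated relation put $\norm{w(t)}_{2}$ and $\int_{0}^{t}\norm{w(s)}_{2}ds$ on the right. After substituting $w(t)=w(0)+\int_{0}^{t}w_{t}\,ds$, a term $\norm{w(0)}_{2}$ remains. This cannot be absorbed into $C$, since $\eta_{t}$ depends linearly on $w$. In fact no argument can remove it, because the second estimate as stated is false.

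Take $w(x,t)=w_{0}(x)$, so the right-hand side of the claimed bound is zero. At $t=0$ the memory integral vanishes, so \eqref{4.4} makes $\tilde{w}_{h}(0)$ the Ritz projection of $w_{0}$ for $\nu(\cdot_{x},\cdot_{x})+\lambda(\cdot,\cdot)$. With $\xi_{0}=\eta(0)$, differentiating \eqref{4.4} at $t=0$ gives
\[
\nu(\eta_{tx}(0),\phi_{x})+\lambda(\eta_{t}(0),\phi)=-\rho(\xi_{0x},\phi_{x})=\frac{\rho\lambda}{\nu}(\xi_{0},\phi)\qquad\forall\,\phi\in S_{h}.
\]
This is nonzero as soon as $\xi_{0}$ is not $L^{2}$-orthogonal to $S_{h}$. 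For instance, take $w_{0}=x^{3}$ on a single element; there $(\xi_{0},x)\neq 0$ for all $\nu,\lambda>0$. Hence $\eta_{t}\neq 0$ although $w_{t}\equiv 0$.

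What your argument actually proves, and what you should state, is
\[
\norm{\eta_{t}}_{j}\leq Ch^{2-j}\Big(\norm{w_{t}}_{2}+\norm{w}_{2}+\int_{0}^{t}\norm{w(s)}_{2}ds\Big),\qquad j=0,1.
\]
This is the usual form for time derivatives of the Ritz--Volterra projection. It is harmless for the paper's later use, since Theorem \ref{thm2} controls $\norm{w}_{2}$.
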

The following estimates of $\eta$ and $\eta_{t}$ at the boundary points $x=0, 1,$ are used in the proof of the error analysis. 
\begin{lemma}
	\label{L4.3}
	For \(x=0, 1\), the following estimates hold
	\[
		\norm{\eta(x)}\leq Ch^{2}\Big(\norm{w}_{2}+\int_{0}^{t}\norm{w(s)}_{2}ds \Big),  
	\]
and \[\norm{\eta_{t}(x)}\leq Ch^{2}\Big(\norm{w_{t}}_{2}+\int_{0}^{t}\norm{w_{t}(s)}_{2}ds \Big),\]
where \(C\) is a positive constant depends on \(\nu,\lambda,\) and \(\rho\).
\end{lemma}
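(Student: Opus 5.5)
We prove the pointwise estimates for $\eta(x)$ and $\eta_t(x)$ at $x=0,1$ by a one-dimensional duality (Green's function) argument applied to the Ritz–Volterra projection \eqref{4.4}; here $\norm{\eta(x)}$ means $|\eta(x,t)|$. Fix the endpoint $x_0\in\{0,1\}$ and let $g\in H^2(0,1)$ solve the auxiliary Neumann problem
\[
-\nu g_{xx}+\lambda g=0 \ \text{in } (0,1), \qquad \nu g_x(0)=-\delta_{x_0,0},\quad \nu g_x(1)=\delta_{x_0,1}.
\]
Because $\lambda>0$, this problem has a smooth solution. For every $v\in H^1$ it gives the representation $v(x_0)=\nu(v_x,g_x)+\lambda(v,g)$, and $\norm{g}_2\le C(\nu,\lambda)$.

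\textbf{Step 1 (error identity).} Take $v=\eta$ and subtract the Galerkin orthogonality \eqref{4.4} tested with $\phi=\chi\in S_h$. This gives
\[
\eta(x_0)=\nu(\eta_x,(g-\chi)_x)+\lambda(\eta,g-\chi)-\rho\Big(\int_0^t e^{-\delta(t-s)}\eta_x(s)\,ds,\ \chi_x\Big).
\]
In the memory term we write $\chi_x=g_x-(g-\chi)_x$. The part involving $(g-\chi)_x$ is bounded by $\int_0^t\norm{\eta_x(s)}\,ds\,\norm{(g-\chi)_x}$. For the part involving $g_x$ we integrate by parts in space. Since $g$ satisfies the ODE, $g_{xx}=(\lambda/\nu)g$, and its boundary values are bounded constants. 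This yields
\[
\Big|\Big(\int_0^t e^{-\delta(t-s)}\eta_x\,ds,\ g_x\Big)\Big|\le C\int_0^t\big(\norm{\eta(s)}+|\eta(0,s)|+|\eta(1,s)|\big)\,ds.
\]

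\textbf{Step 2 (interpolation and Lemma \ref{L4.2}).} Choose $\chi$ to be the interpolant of $g$, so that $\norm{g-\chi}_j\le Ch^{2-j}\norm{g}_2\le Ch^{2-j}$. Combining this with Lemma \ref{L4.2} for $j=0,1$ gives
\[
\nu(\eta_x,(g-\chi)_x)\le Ch\cdot h\Big(\norm{w}_2+\int_0^t\norm{w}_2\Big),
\]
the same $O(h^2)$ bound for $\lambda(\eta,g-\chi)$ and for the memory remainder, and an $O(h^2)$ bound for the $\norm{\eta(s)}$ contribution. Set $\theta(t)=|\eta(0,t)|+|\eta(1,t)|$ and sum over the two endpoints. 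We obtain the Volterra inequality
\[
\theta(t)\le Ch^2\Big(\norm{w}_2+\int_0^t\norm{w}_2\,ds\Big)+C\int_0^t\theta(s)\,ds.
\]
Gronwall's inequality then yields the first claim. Here $C$ depends on $\nu$, $\lambda$, $\rho$, and on $T$ through the factor $e^{CT}$. If a $T$-independent constant is desired, I would try to avoid the integration by parts by bounding the memory term directly with Lemma \ref{L4.2}, which already carries the time integrals.

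\textbf{Step 3 ($\eta_t$).} Differentiate \eqref{4.4} in $t$. This produces
\[
\nu(\eta_{tx},\phi_x)+\rho(\eta_x,\phi_x)-\rho\delta\Big(\int_0^t e^{-\delta(t-s)}\eta_x\,ds,\ \phi_x\Big)+\lambda(\eta_t,\phi)=0 .
\]
Repeat the same duality argument with $v=\eta_t$. The extra terms $\rho(\eta_x,(g-\chi)_x)$ and the memory term are handled with Lemma \ref{L4.2} and the estimate for $\eta(x_0)$ just proved. The resulting bounds involve $\norm{w}_2$; these are dominated by the $\norm{w_t}_2$ data either via $w(t)=w_0+\int_0^t w_t$ or via the a priori bounds of Theorem \ref{thm2}, giving the stated form.

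The \textbf{main obstacle} is Step 1's memory term. There, Galerkin orthogonality against $\chi$ does not directly cancel the term against $g$, so it has to be handled either by the integration by parts with a Gronwall argument, or by exploiting $\eta_x$ bounds under the time integral.
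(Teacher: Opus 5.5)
Your duality argument is the same as the paper's: the Neumann auxiliary problem $-\nu\psi_{xx}+\lambda\psi=0$, the splitting $\chi_x=g_x-(g-\chi)_x$ in the memory term, integration by parts of the $g_x$ part, then interpolation plus Lemma \ref{L4.2}. However, as you note yourself, it does not prove the statement. The lemma asserts $C=C(\nu,\lambda,\rho)$, and this uniformity in $t$ is what Lemmas \ref{L4.4}--\ref{L4.5} need, since they integrate the bound against $e^{2\alpha s}$ to obtain decay for all $t>0$.

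Dropping the weight $e^{-\delta(t-s)}$ and taking absolute values leaves $\theta\le a+C\int_0^t\theta$. The Gronwall bound for this grows like $(1+t)e^{Ct}$, and the factor $t$ already enters through $\int_0^t\int_0^s\norm{w}_2$. Your proposed remedy fails: bounding $\rho\big(\int_0^te^{-\delta(t-s)}\eta_x\,ds,\chi_x\big)$ directly by Lemma \ref{L4.2} gives only $O(h)$, because $\norm{\chi_x}=O(1)$ and $\norm{\eta_x}=O(h)$. That loss is exactly why the integration by parts is needed.

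The missing idea is to keep both the kernel and the sign of the boundary memory term. With your $g$ one has $\big(\int_0^te^{-\delta(t-s)}\eta_x\,ds,g_x\big)=\frac1\nu\int_0^te^{-\delta(t-s)}\eta(x_0,s)\,ds-\frac{\lambda}{\nu}\big(\int_0^te^{-\delta(t-s)}\eta\,ds,g\big)$, so
\[
\eta(x_0,t)+\frac{\rho}{\nu}\int_0^te^{-\delta(t-s)}\eta(x_0,s)\,ds=G(t),\qquad |G(t)|\le Ch^2\Big(\norm{w}_2+\int_0^t\norm{w(s)}_2\,ds\Big).
\]
Here $C$ is independent of $t$ because the weight is kept in the two memory contributions to $G$; for example, $\int_0^te^{-\delta(t-s)}\int_0^s\norm{w}_2\le\delta^{-1}\int_0^t\norm{w}_2$. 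This scalar Volterra equation with positive kernel can be solved exactly. With $\kappa=\rho/\nu$,
\[
\eta(x_0,t)=G(t)-\kappa\int_0^te^{-(\delta+\kappa)(t-s)}G(s)\,ds,
\]
which gives the claim with a $t$-independent constant.

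The paper also keeps the weight, but then squares, takes absolute values and applies Gronwall with the factor $\exp\big(\frac{\rho^2}{\delta}\int_0^te^{-\delta(t-s)}ds\big)$. For a kernel that decreases in $t$, Gronwall's lemma does not give this. For instance, $u=a+c\int_0^te^{-\delta(t-s)}u\,ds$ with $c=\delta$ has solution $u=a(1+\delta t)$. So it is really the sign, through the resolvent formula, that closes the argument for all parameter values.

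Separately, the last sentence of your Step 3 does not hold. Writing $w=w_0+\int_0^tw_t$ leaves a term $\norm{w_0}_2$, and Theorem \ref{thm2} bounds $\norm{w}_2$ by $\norm{w_0}_3$, not by $\norm{w_t}_2$. In fact the $\norm{w}_2$-terms cannot be removed. After integrating by parts in $s$, the differentiated projection reads
\[
\nu(\eta_{tx},\phi_x)+\rho\Big(\int_0^te^{-\delta(t-s)}\eta_{tx}(s)\,ds,\phi_x\Big)+\lambda(\eta_t,\phi)=-\rho e^{-\delta t}(\eta_x(0),\phi_x).
\]
The right-hand side is generally nonzero, because at $t=0$ one has $\nu(\eta_x(0),\phi_x)=-\lambda(\eta(0),\phi)$. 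Hence for a time-independent $w$ the claimed bound has a vanishing right-hand side, while $\eta_t(x_0,t)\neq0$ in general.

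What your Step 3 actually yields is $|\eta_t(x_0,t)|\le Ch^2\big(\norm{w_t}_2+\int_0^t\norm{w_t}_2+\norm{w}_2+\int_0^t\norm{w}_2\big)$. This is harmless downstream, since Theorem \ref{thm2} controls the extra terms. The paper's ``in a similar fashion'' passes over the same point, and the $\eta_t$ bound in Lemma \ref{L4.2} has the same omission.
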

\begin{proof}
	Let \(\psi\in H^{2}(0,1)\) be the solution of 
	\begin{align}
		\label{4.5}
		-\nu\psi_{xx}+\lambda \psi=0, \quad \psi_{x}(1)=1,\  \psi_{x}(0)=0,
	\end{align}
satisfying \(\norm{\psi}_{2}\leq C\), where \(C\) is a positive constant depends on \(\lambda\).

Multiplying by \(\eta\) in \eqref{4.5} and integrating with respect to \(x\) over \([0,1]\) yields
\begin{align*}
      \nu\eta(1,t)=\nu(\eta_{x}, \psi_{x})+\lambda(\eta, \psi).
\end{align*}
Using \eqref{4.4}, we can write
\begin{align}
	\label{4.6}
	\nu\eta(1,t)=\nu(\eta_{x}, \psi_{x}-\phi_{x})+\lambda(\eta, \psi-\phi)-\rho\big(\int_{0}^{t}e^{-\delta(t-s)}\eta_{x}(s)ds, \phi_{x}\big).
\end{align}
From the third term on the right hand side of \eqref{4.6}, we deduce that
\begin{align*}
	\big(\int_{0}^{t}\eta_{x}(s)ds, \phi_{x}\big)=-\big(\int_{0}^{t}\eta_{x}(s)ds, \psi_{x}-\phi_{x}\big)+\big(\int_{0}^{t}\eta_{x}(s)ds, \psi_{x}\big).
\end{align*}
Using integration by parts, we arrive at
\begin{align*}
	\big(\int_{0}^{t}\eta_{x}(s)ds, \phi_{x}\big)=-\big(\int_{0}^{t}\eta_{x}(s)ds, \psi_{x}-\phi_{x}\big)+\int_{0}^{t}\eta(1,s)ds-\big(\int_{0}^{t}\eta(s)ds, \psi_{xx}\big).
\end{align*}
Therefore, from \eqref{4.6}, we write
\begin{align*}
	\nu\eta(1,t)&=\nu(\eta_{x}, \psi_{x}-\phi_{x})+\lambda(\eta, \psi-\phi)+\rho\big(\int_{0}^{t}e^{-\delta(t-s)}\eta_{x}(s)ds, \psi_{x}-\phi_{x}\big)
	\\&\quad -\rho\int_{0}^{t}e^{-\delta(t-s)}\eta(1,s)ds+\rho\big(\int_{0}^{t}e^{-\delta(t-s)}\eta(s)ds, \psi_{xx}\big).
\end{align*}
Using H\"older's inequality, it follows that
\begin{align*}
	|\eta(1,t)|^{2}&\leq C\norm{\eta_{x}}^{2}\norm{\psi_{x}-\phi_{x}}^{2}+C\norm{\eta}^{2}\norm{\psi-\phi}^{2}+C\int_{0}^{t}\norm{\eta_{x}(s)}^{2}ds \norm{\psi_{x}-\phi_{x}}^{2}\\&\quad +\frac{\rho^{2}}{\delta}\int_{0}^{t}e^{-\delta(t-s)}|\eta(1,s)|^{2}ds+C\int_{0}^{t}\norm{\eta(s)}^{2}ds\norm{\psi_{xx}}^{2},
\end{align*}
where \(  \big(\int_{0}^{t}e^{-\delta(t-s)}\eta(1,s)ds\big)^{2}\leq \big(\int_{0}^{t}e^{-\delta(t-s)}ds\big)\big(\int_{0}^{t}e^{-\delta(t-s)}|\eta(1,s)|^{2}ds\big)\leq \frac{1}{\delta} \int_{0}^{t}e^{-\delta(t-s)}|\eta(1,s)|^{2}ds.\)

Using \cite[Chapter 1]{thomee2007galerkin} and Lemma \ref{L4.2}, we obtain
\begin{align*}
	|\eta(1,t)|^{2}&\leq Ch^{4}\norm{w}_{2}^{2}+Ch^{8}\norm{w}_{2}^{2}+Ch^{4}\int_{0}^{t}\norm{w(s)}_{2}^{2}ds  +\frac{\rho^{2}}{\delta}\int_{0}^{t}e^{-\delta(t-s)}|\eta(1,s)|^{2}ds.
\end{align*}
Using Gronwall's inequality, we have
\begin{align*}
	|\eta(1,t)|^{2}\leq Ch^{4}\Big(\norm{w}_{2}^{2}+\int_{0}^{t}\norm{w(s)}_{2}^{2}ds \Big)e^{\frac{\rho^{2}}{\delta}\int_{0}^{t}e^{-\delta(t-s)}ds}\leq Ch^{4}\Big(\norm{w}_{2}^{2}+\int_{0}^{t}\norm{w(s)}_{2}^{2}ds \Big).
\end{align*}
In a similar fashion, we can prove 
\begin{align*}
	|\eta_{t}(1,t)|^{2}\leq Ch^{4}\Big(\norm{w_{t}}_{2}^{2}+\int_{0}^{t}\norm{w_{t}(s)}_{2}^{2}ds \Big).
\end{align*}
A similar result holds for \(x=0.\)

This completes the proof.
\end{proof}
We define the error \(e:=w-w_{h}=:\eta-\theta\), where \(\eta=w-\tilde{w}_{h}\) and \(\theta=w_{h}-\tilde{w}_{h}\). 

Choose \(\tilde{w}_{h}(0)=w_{0h}\), so that \(\theta(0)=0.\) From Lemmas \ref{L4.2}-\ref{L4.3}, the estimates of \(\eta\) are known. Therefore, it is enough to estimate \(\theta.\)

Subtracting \eqref{4.1} from \eqref{1.13} and using Volterra projection \eqref{4.4}, we obtain
\begin{align}
	\label{4.7}
	(\theta_{t}, \phi)+\nu(\theta_{x}, \phi_{x})&+w_{d}(\theta_{x}, \phi)+(c_{0}+w_{d})\theta(0,t)\phi(0)-\rho \zeta(0,t)\phi(0)
	\nonumber\\&+(c_{1}+w_{d})\theta(1,t)\phi(1)+\rho \zeta(1,t)\phi(1)-\rho(\zeta_{x}, \phi)
	\nonumber\\&=(\eta_{t}, \phi)-\lambda(\eta, \phi)+w_{d}(\eta_{x}, \phi)+(c_{0}+w_{d})\eta(0,t)\phi(0)+(c_{1}+w_{d})\eta(1,t)\phi(1)
	\nonumber\\&\quad +(ww_{x}-w_{h}w_{hx}, \phi)+\frac{2}{9c_{0}}\Big(w^{3}(0,t)-w_{h}^{3}(0,t)\Big)\phi(0)+\frac{2}{9c_{1}}\Big(w^{3}(1,t)-w_{h}^{3}(1,t)\Big)\phi(1),
\end{align}
where \(\zeta:=\int_{0}^{t}e^{-\delta(t-s)}\theta_{x}(s)ds\)  and after differentiating with respect to time yields
\begin{align}
	\label{4.8}
	\zeta_{t}+\delta \zeta=\theta_{x}.
\end{align}
The cubic term \( w^{3}(i,t)-w_{h}^{3}(i,t)\) can be written as
\[w^{3}(i,t)-w_{h}^{3}(i,t)=\eta^{3}(i,t)-\theta^{3}(i,t)+3w(i,t)\eta(i,t)\Big(w(i,t)-\eta(i,t)\Big)-3w_{h}(i,t)\theta(i,t)\Big(w_{h}(i,t)-\theta(i,t)\Big),
\]
where \(i=0,1.\)

The following lemma deals with the proof of the error estimate for the state variable in the $L^{2}$-norm.
\begin{lemma}
	\label{L4.4}
    Suppose that  assumptions \((A1)\) and \((A2)\) hold. Then, there exists a positive constant $C$ independent of $h$ such that for \(0<\alpha\leq \frac{1}{2}\min\{\nu, \delta, c_{0}+\frac{w_{d}}{2}-\frac{\nu}{2}, c_{1}+w_{d}-\frac{\nu}{2} \}\) and \(\nu\leq 2\min\{c_{0}+\frac{w_{d}}{2}, c_{1}+w_{d}\}\) such that
   \begin{align*}
   	\norm{\theta}^{2}&+\rho \norm{\zeta}^{2}+\beta_{1}e^{-2\alpha t}\int_{0}^{t}e^{2\alpha s}\Big(\norm{\theta_{x}(s)}^{2}+ \rho\norm{\zeta(s)}^{2}+\theta^{2}(0,s)+\theta^{2}(1,s) \Big)ds
   	\nonumber\\&+e^{-2\alpha t}\int_{0}^{t}e^{2\alpha s}\Big(\frac{1}{9c_{0}}\theta^{4}(0,s)+\frac{1}{9c_{1}}\theta^{4}(1,s) \Big)ds\leq C(\norm{w_{0}}_{2}) h^{4} e^{-2\alpha t}\norm{w_{0}}_{3}^{2},
   \end{align*}
where \(0<\beta_{1}=\min\{(\nu-2\alpha), (c_{0}+\frac{w_{d}}{2}-\frac{\nu}{2}-2\alpha),(c_{1}+w_{d}-\frac{\nu}{2}-2\alpha),(\delta-2 \alpha)\}\).
\end{lemma}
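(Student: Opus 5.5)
We test the error equation \eqref{4.7} with $\phi=\theta$ and take the $L^{2}$-inner product of \eqref{4.8} with $\rho\zeta$, exactly mirroring Lemma \ref{L2.1}. Integration by parts in $\rho(\zeta,\theta_x)$ produces the boundary terms $\rho\zeta(1)\theta(1)-\rho\zeta(0)\theta(0)$. These cancel the corresponding $\zeta$-boundary terms in \eqref{4.7}, together with $-\rho(\zeta_x,\theta)$. The left-hand side then becomes
\[
\tfrac12\tfrac{d}{dt}\big(\norm{\theta}^2+\rho\norm{\zeta}^2\big)+\nu\norm{\theta_x}^2+\rho\delta\norm{\zeta}^2+\big(c_0+\tfrac{w_d}{2}\big)\theta^2(0,t)+\big(c_1+\tfrac{3w_d}{2}\big)\theta^2(1,t),
\]
where we used $w_d(\theta_x,\theta)=\tfrac{w_d}{2}(\theta^2(1)-\theta^2(0))$. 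From the cubic decomposition, the term $-\tfrac{2}{9c_i}\theta^3(i)\cdot(\pm\theta(i))$ and the $-3w_h\theta(w_h-\theta)$ piece need care. We move the $\theta^4(i)$ contributions to the left to obtain the $\tfrac{1}{9c_i}\theta^4(i)$ terms. The mixed terms $w_h^2\theta^2$ and $w_h\theta^3$ are absorbed by Young's inequality into $\theta^4(i)$ and $\theta^2(i)$, with coefficients involving $\norm{w_h}_\infty^2$. These coefficients are integrable in time by Lemma \ref{L4.1} (via $w_h^4(i,\cdot)$ and $\norm{w_{hx}}^2$).

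Next we bound the right-hand side. We use $(\eta_t-\lambda\eta,\theta)\le C(\norm{\eta_t}^2+\norm{\eta}^2)+\epsilon\norm{\theta}^2$, and $w_d(\eta_x,\theta)=-w_d(\eta,\theta_x)+$ boundary terms. The boundary terms $(c_i+w_d)\eta(i)\theta(i)$ are handled by Young's inequality with $\tfrac{\nu}{4}\theta^2(i)$, which explains the $-\tfrac{\nu}{2}$ in the constants and the hypothesis $\nu\le 2\min\{c_0+\tfrac{w_d}{2},c_1+w_d\}$. For the convective term we write $ww_x-w_hw_{hx}=\tfrac12\partial_x(w^2-w_h^2)$ and integrate by parts. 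This yields $-\tfrac12(e(w+w_h),\theta_x)$ plus boundary values, where $e=\eta-\theta$ and $w_h=w-\eta+\theta$. The pieces are $\norm{w}_\infty\norm{\eta}\norm{\theta_x}$, $\norm{w}_\infty\norm{\theta}\norm{\theta_x}$, $\norm{\eta}_\infty\norm{\theta}\norm{\theta_x}$, and the cubic $(\theta^2,\theta_x)$. The last one integrates to $\tfrac13(\theta^3(1)-\theta^3(0))$, which we handle exactly as in Lemma \ref{L2.1} with $\tfrac{c_i}{2}\theta^2(i)+\tfrac{1}{18c_i}\theta^4(i)$. The $\eta$-cubic terms $\eta^3(i)$ and $w\eta(w-\eta)$ are bounded by Lemma \ref{L4.3} and Theorem \ref{thm2}, giving $O(h^4)$ sources. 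After using Poincaré–Wirtinger to control $\norm{\theta}^2$ by $\norm{\theta_x}^2+\theta^2(i)$, we arrive at
\[
\tfrac{d}{dt}\big(\norm{\theta}^2+\rho\norm{\zeta}^2\big)+\text{(dissipation)}\le C\big(\norm{w}_1^2+\norm{\eta}_1^2\big)\norm{\theta}^2+C\big(\norm{\eta}^2+\norm{\eta_t}^2+\eta^2(i)+\eta^6(i)+\cdots\big).
\]

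Finally, we multiply by $e^{2\alpha t}$ and absorb $2\alpha(\norm{\theta}^2+\rho\norm{\zeta}^2)$ through the restriction on $\alpha$, which defines $\beta_1$. We integrate using $\theta(0)=\zeta(0)=0$ and apply Gronwall's inequality. The Gronwall exponent $\int_0^t\norm{w}_1^2$ is bounded by Theorem \ref{thm2}. The source integrals $\int_0^t e^{2\alpha s}(\norm{\eta}^2+\norm{\eta_t}^2+\eta^2(i)+\cdots)\,ds$ are bounded by $Ch^4\norm{w_0}_3^2$ using Lemmas \ref{L4.2}–\ref{L4.3}. For the memory integrals $\int_0^s\norm{w}_2$, we use Cauchy–Schwarz with weights $e^{2\alpha s}$ together with the weighted bounds of Theorem \ref{thm2}.

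The main obstacle is the nonlinear coupling, namely the terms $\norm{w_h}_\infty$ and $\theta^3(i)$. These must be closed without a smallness assumption, using either the a priori bound of Lemma \ref{L4.1} or the replacement $w_h=w-\eta+\theta$ so that only the $\theta^4(i)$ boundary dissipation absorbs the highest-order terms. I would first try the replacement, and fall back on Lemma \ref{L4.1} for the leftover time-integrable weights.
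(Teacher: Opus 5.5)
Your proof follows the same skeleton as the paper's. You test \eqref{4.7} with $\theta$, add $\rho$ times \eqref{4.8} tested with $\zeta$ so that the $\zeta$-coupling cancels, use Young and Poincar\'e--Wirtinger, multiply by the weight $e^{2\alpha t}$, and close with Gronwall using Theorem~\ref{thm2}, Lemma~\ref{L4.1} and Lemmas~\ref{L4.2}--\ref{L4.3}. Where you genuinely differ is in the two nonlinearities.

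\textbf{How the paper handles them.} It splits $ww_x-w_hw_{hx}=w_x(\eta-\theta)+w_h\eta_x-w_h\theta_x$. It pays for $-(w_h\theta_x,\theta)$ with the Gronwall weight $\norm{w_h}_\infty^2\le 2\norm{|w_h|}^2$, which is integrable by Lemma~\ref{L4.1}. For the cubic feedback, $-\frac{2}{3c_i}w_h^2\theta^2(i)\le 0$, and the Young constant in $I_6$ is chosen so that $\frac{2}{3c_i}w_h\theta^3(i)\le\frac{1}{6c_i}\theta^4(i)+\frac{2}{3c_i}w_h^2\theta^2(i)$ cancels this term exactly. This is $(a^3-b^3)(a-b)\ge\frac14(a-b)^4$ with $a=w_h(i)$, $b=\tilde w_h(i)$. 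It leaves $\frac{2}{9c_i}-\frac{1}{6c_i}=\frac{1}{18c_i}$, i.e.\ the $\frac{1}{9c_i}\theta^4(i)$ of the statement, with no $w_h$-weight at all.

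\textbf{How yours handles them, and the cost.} Your conservative form with $w_h=w-\eta+\theta$ makes the $\theta$-nonlinearity of convection appear only as $\mp\frac13\theta^3(i)$, treated as in Lemma~\ref{L2.1}. So your convective weights involve only $w$ and $\eta$.
\begin{itemize}
\item This consumes $\frac{1}{18c_i}\theta^4(i)$ and $\frac{c_i}{2}\theta^2(i)$. After that, the exact cancellation is no longer affordable if you want to keep the $\frac{1}{9c_i}\theta^4(i)$ term, since $\frac{2}{9c_i}-\frac{1}{18c_i}-\frac{1}{6c_i}=0$.
\item With a smaller Young constant you are left with a positive multiple of $w_h^2(i)\theta^2(i)$. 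This is not of Gronwall form until you use $\theta^2(i)\le\norm{\theta}^2+2\norm{\theta}\norm{\theta_x}$. That gives the weight $w_h^2(i)+w_h^4(i)$, integrable by Lemma~\ref{L4.1}; presumably this is what your parenthetical means.
\item The same trace step is needed for the boundary terms $\mp w(i)\theta^2(i)$ produced by your integration by parts. Their weight is $\norm{w}_\infty+\norm{w}_\infty^2$, integrable by the exponential decay in Theorem~\ref{thm2}.
\end{itemize}
So both routes rest on Lemma~\ref{L4.1}: the paper through $\int\norm{|w_h|}^2$, you through $\int w_h^4(i)$.

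\textbf{A step that fails as justified.} You claim that $\int_0^t e^{2\alpha s}\big(\norm{\eta}^2+\norm{\eta_t}^2+\eta^2(i,s)+\cdots\big)\,ds\le Ch^4\norm{w_0}_3^2$ via weighted Cauchy--Schwarz. This does not hold with the lemmas as stated.
\begin{itemize}
\item In Lemmas~\ref{L4.2}--\ref{L4.3} the history term $\int_0^s\norm{w(\tau)}_2\,d\tau$ is nondecreasing in $s$, so it does not decay. Weighted Cauchy--Schwarz only shows that it is bounded.
\item Hence $\int_0^t e^{2\alpha s}\big(\int_0^s\norm{w(\tau)}_2\,d\tau\big)^2ds$ is of order $e^{2\alpha t}$. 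After multiplying by $e^{-2\alpha t}$ you get $\norm{\theta}^2\le Ch^4$ with no decay.
\item For the same reason, $\norm{\eta}_1^2$ is only known to be $O(h^2)$ uniformly in time, not integrable on $(0,\infty)$. It should not sit in your Gronwall exponent; absorb those terms using the smallness of $h$ instead.
\end{itemize}
To obtain the factor $e^{-2\alpha t}$ you need a sharper projection estimate whose history term carries the kernel, e.g.\ $\int_0^t e^{-\delta(t-s)}\norm{w(s)}_2\,ds$. Then Fubini gives
\[
\int_0^t e^{2\alpha s}\Big(\int_0^s e^{-\delta(s-\tau)}\norm{w(\tau)}_2\,d\tau\Big)^2ds\le\frac{1}{\delta(\delta-2\alpha)}\int_0^t e^{2\alpha\tau}\norm{w(\tau)}_2^2\,d\tau,
\]
which is finite by Theorem~\ref{thm2} because $\delta-2\alpha\ge\beta_1>0$; the same argument handles $\eta_t$. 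The paper's proof passes over this point in the same way, so your route loses nothing relative to it. Still, your stated justification does not close this step.
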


\begin{proof}
    Choose \(\phi=\theta\) in \eqref{4.7} to obtain
    \begin{align}
    	\label{4.9}
    	\frac{1}{2}\frac{d}{dt}\norm{\theta}^{2}&+ \nu\norm{\theta_{x}}^{2}+(c_{0}+\frac{w_{d}}{2})\theta^{2}(0,t)+(c_{1}+\frac{3w_{d}}{2})\theta^{2}(1,t)+\frac{2}{9c_{0}}\theta^{4}(0,t)+\frac{2}{9c_{1}}\theta^{4}(1,t)
    	\nonumber\\&-\rho \zeta(0,t)\theta(0,t)+\rho \zeta(1,t)\theta(1,t)-(\zeta_{x}, \theta)
    	\nonumber\\&=(\eta_{t}-\lambda \eta, \theta)+w_{d}(\eta_{x}, \theta)+(c_{0}+w_{d})\eta(0,t)\theta(0,t)+(c_{1}+w_{d})\eta(1,t)\theta(1,t)
    	\nonumber\\&\quad +(ww_{x}-w_{h}w_{hx}, \theta)+\sum_{i=0}^{1}\Big(\frac{2}{9c_{i}}\eta^{3}(i,t)+\frac{2}{3c_{i}}\eta(i,t)w(i,t)\big(w(i,t)-\eta(i,t)\big)\Big)\theta(i,t)
    	\nonumber\\&\quad+\sum_{i=0}^{1}\Big(-\frac{2}{3c_{i}}w_{h}^{2}(i,t)\theta^{2}(i,t)+\frac{2}{3c_{i}}w_{h}(i,t)\theta^{3}(i,t)\Big).
    \end{align}
Multiplying \eqref{4.8} by \(\zeta\) and integrating with respect \(x\) over \([0,1]\), we get 
\begin{align}
	\label{4.10}
	\frac{1}{2}\frac{d}{dt}\norm{\zeta}^{2}+\delta\norm{\zeta}^{2}=\theta(1,t)\zeta(1,t)-\theta(0,t)\zeta(0,t)-(\zeta_{x}, \theta).
\end{align}
Again, multiplying \eqref{4.10} by \(\rho\) and adding in \eqref{4.9}, it follows that
\begin{align}
	\label{4.11}
	\frac{1}{2}\frac{d}{dt}\Big(\norm{\theta}^{2}&+\rho \norm{\zeta}^{2} \Big)+\nu\norm{\theta_{x}}^{2}+\rho \delta\norm{\zeta}^{2}+(c_{0}+\frac{w_{d}}{2})\theta^{2}(0,t)+(c_{1}+\frac{3w_{d}}{2})\theta^{2}(1,t)
	\nonumber\\&+\frac{2}{9c_{0}}\theta^{4}(0,t)+\frac{2}{9c_{1}}\theta^{4}(1,t)
	\nonumber\\&=(\eta_{t}-\lambda \eta, \theta)+w_{d}(\eta_{x}, \theta)+(c_{0}+w_{d})\eta(0,t)\theta(0,t)+(c_{1}+w_{d})\eta(1,t)\theta(1,t)
	\nonumber\\&\quad +(ww_{x}-w_{h}w_{hx}, \theta)+\sum_{i=0}^{1}\Big(\frac{2}{9c_{i}}\eta^{3}(i,t)+\frac{2}{3c_{i}}\eta(i,t)w(i,t)\big(w(i,t)-\eta(i,t)\big)\Big)\theta(i,t)
	\nonumber\\&\quad+\sum_{i=0}^{1}\Big(-\frac{2}{3c_{i}}w_{h}^{2}(i,t)\theta^{2}(i,t)+\frac{2}{3c_{i}}w_{h}(i,t)\theta^{3}(i,t)\Big),
	\nonumber\\&=\sum_{j=1}^{6}I_{j}.
\end{align}
The first term on the right hand side of \eqref{4.11} is bounded by
\begin{align*}
	I_{1}=(\eta_{t}-\lambda \eta, \theta)\leq C\Big(\norm{\eta_{t}}^{2}+ \norm{\eta}^{2}\Big)+\frac{\nu}{8}\norm{\theta}^{2}.
\end{align*}
Using integration by parts, the second term \(I_{2}\) on the right hand side of \eqref{4.11} gives 
\begin{align*}
 I_{2}=w_{d}(\eta_{x}, \theta)&=w_{d}\eta(1,t)\theta(1,t)-w_{d}\eta(0,t)\theta(0,t)-w_{d}(\eta, \theta_{x})
 \\&\leq C\Big(\eta^{2}(0,t)+\eta^{2}(1,t)+\norm{\eta}^{2}\Big)+\frac{w_{d}}{4}\theta^{2}(0,t)+w_{d} \theta^{2}(1,t)+\frac{\nu}{12}\norm{\theta_{x}}^{2}.
\end{align*}
A use of the Young's inequality, the third term \(I_{3}\) gives
\begin{align*}
   I_{3}=(c_{0}+w_{d})\eta(0,t)\theta(0,t)+(c_{1}+w_{d})\eta(1,t)\theta(1,t),
   \leq C\Big(\eta^{2}(0,t)+\eta^{2}(1,t)\Big)+\frac{c_{0}}{8}\theta^{2}(0,t)+\frac{c_{1}}{8}\theta^{2}(1,t).
\end{align*}
Rewrite the fourth term \(I_{4}\) on the right hand side of \eqref{4.11} as 
\begin{align}
	\label{4.12}
	I_{4}=(ww_{x}-w_{h}w_{hx}, \theta)=\big(w_{x}(\eta-\theta), \theta\big)+(w_{h}\eta_{x}, \theta)-(w_{h}\theta_{x}, \theta).
\end{align}
First subterms term of $I_{4}$ on the right hand side of \eqref{4.12} is bounded by
\begin{align*}
	\big(w_{x}(\eta-\theta), \theta\big)&\leq \norm{w_{x}}_{\infty}\norm{\eta}\norm{\theta}+ \norm{w_{x}}_{\infty}\norm{\theta}\norm{\theta},
	\\&\leq C\norm{\eta}^{2}+ \frac{\nu}{8}\norm{\theta}^{2}+C\norm{w_{x}}_{\infty}^{2}\norm{\theta}^{2}.
\end{align*}
Setting \(w_{h}=\tilde{w}_{h}+\theta\) into second subterms of $I_{4}$ on the right hand side of \eqref{4.12} yields
\begin{align*}
	(w_{h}\eta_{x}, \theta)=\Big( (\tilde{w}_{h}+\theta)\eta_{x}, \theta\Big)&=\tilde{w}_{h}(1,t)\eta(1,t)\theta(1,t)-\tilde{w}_{h}(0,t)\eta(0,t)\theta(0,t)-(\eta, \tilde{w}_{hx}\theta+\tilde{w}_{h}\theta_{x} ),
	\\&\leq C\Big(\tilde{w}_{h}^{2}(1,t)\eta^{2}(1,t)+\tilde{w}_{h}^{2}(0,t)\eta^{2}(0,t)\Big)+\frac{c_{0}}{8}\theta^{2}(0,t)+\frac{c_{1}}{8}\theta^{2}(1,t)
	\\&\quad + C\Big(\norm{\tilde{w}_{hx}}_{\infty}^{2}+\norm{\tilde{w}_{h}}_{\infty}^{2}\Big)\norm{\theta}^{2}+\frac{\nu}{12}\norm{\theta_{x}^{2}}+C\norm{\eta}^{2}.
\end{align*}
Last subterms of $I_{4}$ on the right hand of \eqref{4.12} is bounded by 
\begin{align*}
	-(w_{h}\theta_{x}, \theta)\leq \norm{w_{h}}_{\infty}\norm{\theta_{x}}\norm{\theta}\leq C\norm{w_{h}}_{\infty}^{2}\norm{\theta}^{2}+ \frac{\nu}{12}\norm{\theta_{x}^{2}}.
\end{align*}
Now, from the fifth term \(I_{5}\) on the right hand side of \eqref{4.11}, we arrive at
\[
	\frac{2}{9c_{i}}\eta^{3}(i,t)\theta(i,t)\leq C\eta^{6}(i,t)+\frac{c_{i}}{8}\theta^{2}(i,t).
\]
 Using Young's inequality subterms of $I_{5}$ yields
\[ \frac{2}{3c_{i}}w(i,t)\eta(i,t)\big(w(i,t)-\eta(i,t)\big)\theta(i,t)\leq C\big(\eta^{2}(i,t)w^{4}(i,t)+ \eta^{4}(i,t)w^{2}(i,t)\big)+\frac{c_{i}}{8}\theta^{2}(i,t), 
\]
where $i=0,1.$

Finally, the final term on the right hand side of \eqref{4.11} is given by
\begin{align*}
	I_{6}=\frac{2}{3c_{i}}w_{h}(i,t)\theta^{3}(i,t)\leq \frac{6}{36c_{i}}\theta^{4}(i,t)+ \frac{2}{3c_{i}}\theta^{2}(i,t)w_{h}^{2}(i,t), \quad i=0,1.
\end{align*}
Substituting the bounds for \(I_{j}, j=1,2,\ldots,6\) into \eqref{4.11}, it follows that
\begin{align*}
\frac{1}{2}\frac{d}{dt}\Big(\norm{\theta}^{2}&+\rho \norm{\zeta}^{2} \Big)+\frac{3\nu}{4}\norm{\theta_{x}}^{2}+\rho \delta\norm{\zeta}^{2}+(\frac{c_{0}}{2}+\frac{w_{d}}{4})\theta^{2}(0,t)+(\frac{c_{1}}{2}+\frac{w_{d}}{2})\theta^{2}(1,t)
\nonumber\\&+\frac{1}{18c_{0}}\theta^{4}(0,t)+\frac{1}{18c_{1}}\theta^{4}(1,t)
\nonumber\\&\leq \frac{\nu}{4}\norm{\theta}^{2}+ C \Big(\norm{w_{x}}_{\infty}^{2}+\norm{\tilde{w}_{hx}}_{\infty}^{2}+\norm{\tilde{w}_{h}}_{\infty}^{2}+\norm{{w}_{h}}_{\infty}^{2} \Big)\norm{\theta}^{2}
\nonumber\\&\quad + C\bigg(\norm{\eta_{t}}^{2}+\norm{\eta}^{2}+\sum_{i=0}^{1}\Big(\eta^{2}(i,t)+\eta^{2}(i,t)w^{4}(i,t)+\eta^{4}(i,t)w^{2}(i,t)+\eta^{6}(i,t)\Big)\bigg).
\end{align*}
Using \(\norm{\tilde{w}_{h}}_{\infty}\leq C\norm{w}_{1}\) and Poincar\'e-Wirtinger's inequality, we arrive at

\begin{align}
	\label{4.13}
	\frac{1}{2}\frac{d}{dt}\Big(\norm{\theta}^{2}&+\rho \norm{\zeta}^{2} \Big)+\frac{\nu}{2}\norm{\theta_{x}}^{2}+\rho \delta\norm{\zeta}^{2}+(\frac{c_{0}}{2}+\frac{w_{d}}{4}-\frac{\nu}{4})\theta^{2}(0,t)+(\frac{c_{1}}{2}+\frac{w_{d}}{2}-\frac{\nu}{4})\theta^{2}(1,t)
	\nonumber\\&+\frac{1}{18c_{0}}\theta^{4}(0,t)+\frac{1}{18c_{1}}\theta^{4}(1,t)
	\nonumber\\&\leq  C \Big(\norm{w}_{2}^{2}+\norm{w}_{1}^{2}+2\norm{|{w}_{h}|}^{2} \Big)\norm{\theta}^{2}
	\nonumber\\&\quad + C\bigg(\norm{\eta_{t}}^{2}+\norm{\eta}^{2}+\sum_{i=0}^{1}\Big(\eta^{2}(i,t)+\eta^{2}(i,t)w^{4}(i,t)+\eta^{4}(i,t)w^{2}(i,t)+\eta^{6}(i,t)\Big)\bigg).
\end{align}
Multiplying \eqref{4.13} by \(2e^{2\alpha t}\) and using Poincar\'e-Wirtinger's inequality, we have
\begin{align*}
	\frac{d}{dt}\Big(\norm{e^{\alpha t}\theta}^{2}&+\rho \norm{e^{\alpha t}\zeta}^{2} \Big)+(\nu-2\alpha)e^{2\alpha t}\norm{\theta_{x}}^{2}
	+(\rho\delta-2\rho \alpha)e^{2\alpha t}\norm{\zeta}^{2}+(c_{0}+\frac{w_{d}}{2}-\frac{\nu}{2}-2\alpha)e^{2\alpha t}\theta^{2}(0,t)
	\nonumber\\&+(c_{1}+w_{d}-\frac{\nu}{2}-2\alpha)e^{2\alpha t}\theta^{2}(1,t)
	+\frac{e^{2\alpha t}}{9c_{0}}\theta^{4}(0,t)+\frac{e^{2\alpha t}}{9c_{1}}\theta^{4}(1,t)
	\nonumber\\&\leq Ce^{2\alpha t} \Big(\norm{w}_{2}^{2}+\norm{w}_{1}^{2}+2\norm{|{w}_{h}|}^{2} \Big)\norm{\theta}^{2}
	\nonumber\\&\quad + Ce^{2\alpha t}\bigg(\norm{\eta_{t}}^{2}+\norm{\eta}^{2}+\sum_{i=0}^{1}\Big(\eta^{2}(i,t)+\eta^{2}(i,t)w^{4}(i,t)+\eta^{4}(i,t)w^{2}(i,t)+\eta^{6}(i,t)\Big)\bigg).
\end{align*}
Integrating with respect to time over \([0,t]\), we observe that
\begin{align*}
	\norm{e^{\alpha t}\theta}^{2}&+\rho \norm{e^{\alpha t}\zeta}^{2}+\beta_{1}\int_{0}^{t}e^{2\alpha s}\Big(\norm{\theta_{x}(s)}^{2}+ \rho\norm{\zeta(s)}^{2}+\theta^{2}(0,s)+\theta^{2}(1,s) \Big)ds
\nonumber\\&+\int_{0}^{t}e^{2\alpha s}\Big(\frac{1}{9c_{0}}\theta^{4}(0,s)+\frac{1}{9c_{1}}\theta^{4}(1,s) \Big)ds
\nonumber\\&\leq C\int_{0}^{t}e^{2\alpha s} \Big(\norm{w(s)}_{2}^{2}+\norm{w(s)}_{1}^{2}+2\norm{|{w}_{h}(s)|}^{2} \Big)\norm{\theta(s)}^{2}ds
\nonumber\\& \quad + C\int_{0}^{t}e^{2\alpha s}\bigg(\sum_{i=0}^{1}\Big(\eta^{2}(i,s)+\eta^{2}(i,s)w^{4}(i,s)+\eta^{4}(i,s)w^{2}(i,s)+\eta^{6}(i,s)\Big)
\nonumber\\& \quad \quad +\norm{\eta_{t}(s)}^{2}+\norm{\eta(s)}^{2}\bigg)ds,
\end{align*}
where \(0<\beta_{1}=\min\{(\nu-2\alpha), (c_{0}+\frac{w_{d}}{2}-\frac{\nu}{2}-2\alpha),(c_{1}+w_{d}-\frac{\nu}{2}-2\alpha),(\delta-2 \alpha)\}\).

Using the Gronwall's inequality,  Theorem \ref{thm2} with $\alpha=0$, and Lemmas \ref{L4.1}$(\alpha=0) $ and \ref{L4.2}-\ref{L4.3}, the proof is completed after multiplying by \(e^{-2\alpha t}\) in the resulting inequality.
\end{proof}
In the next lemma, we prove the error estimate for the state variable in the \(H^{1}\)-norm.
\begin{lemma}
	\label{L4.5}
	Let the assumptions \((A1)\) and \((A2)\) hold. Then the following holds:
	\begin{align*}
		\nu \norm{\theta_{x}}^{2}+ \sum_{i=0}^{1}\Big((c_{i}+w_{d})\theta^{2}(i,t)+\frac{2}{27c_{i}}\theta^{4}(i,t)\Big)+2\int_{0}^{t}e^{2\alpha s}\norm{\theta_{t}(s)}^{2}ds \leq h^{4}e^{-2\alpha t}C(\norm{w_{0}}_{3}).
	\end{align*}
\end{lemma}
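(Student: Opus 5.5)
I would follow the pattern of Lemma \ref{L2.2}, this time applied to the error equation \eqref{4.7}. The test function is \(\phi=\theta_{t}\in S_{h}\).

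The left side of \eqref{4.7} then gives \(\norm{\theta_{t}}^{2}+\frac{\nu}{2}\frac{d}{dt}\norm{\theta_{x}}^{2}\). It also gives \(\frac12\frac{d}{dt}\big((c_{i}+w_{d})\theta^{2}(i,t)\big)\), and the term \(w_{d}(\theta_{x},\theta_{t})\le C\norm{\theta_{x}}^{2}+\frac18\norm{\theta_{t}}^{2}\).

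For the cubic boundary nonlinearity I would move the piece \(-\frac{2}{9c_{i}}\theta^{3}(i,t)\theta_{t}(i,t)\) of the expansion of \(w^{3}-w_{h}^{3}\) to the left. It then appears as \(\frac{1}{18c_{i}}\frac{d}{dt}\theta^{4}(i,t)\). After absorbing part of the mixed terms \(w_{h}\theta^{2}\theta_{t}\) and \(w_{h}^{2}\theta\theta_{t}\), a smaller multiple such as \(\frac{2}{27c_{i}}\) survives, which accounts for that coefficient in the statement.

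The memory terms \(-\rho\zeta(0,t)\theta_{t}(0,t)+\rho\zeta(1,t)\theta_{t}(1,t)-\rho(\zeta_{x},\theta_{t})\) combine by integration by parts into \(\rho(\zeta,\theta_{xt})\). By \eqref{4.8}, exactly as in the proof of Lemma \ref{L2.2},
\[
(\zeta,\theta_{xt})=\frac{d}{dt}(\zeta,\theta_{x})+\delta(\zeta,\theta_{x})-\norm{\theta_{x}}^{2}.
\]
So the memory contribution is an exact time derivative plus terms bounded by \(C(\norm{\zeta}^{2}+\norm{\theta_{x}}^{2})\). After integration in time, \(\rho e^{2\alpha t}(\zeta,\theta_{x})\) is bounded by \(\frac{\nu}{4}e^{2\alpha t}\norm{\theta_{x}}^{2}+Ce^{2\alpha t}\norm{\zeta}^{2}\), and the second piece is controlled by Lemma \ref{L4.4}.

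On the right side, \((\eta_{t}-\lambda\eta,\theta_{t})\) is handled by Young's inequality, giving \(C(\norm{\eta_{t}}^{2}+\norm{\eta}^{2})\). The term \(w_{d}(\eta_{x},\theta_{t})\) I would not integrate by parts, since that would create \(\theta_{xt}\). Instead I bound it by \(C\norm{\eta_{x}}^{2}+\frac18\norm{\theta_{t}}^{2}\). This appears to lose an order, because Lemma \ref{L4.2} only gives \(\norm{\eta_{x}}\le Ch\). So I would first try the alternative of writing it as \(\frac{d}{dt}\big[w_{d}(\eta_{x},\theta)\big]-w_{d}(\eta_{xt},\theta)\) and integrating by parts on \(\eta\). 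This leaves only \(\eta\), \(\eta_{t}\) and point values of \(\eta\), \(\eta_{t}\) (all \(O(h^{2})\) by Lemmas \ref{L4.2}--\ref{L4.3}) multiplied by \(\theta,\theta_{x}\). Those factors are controlled by Lemma \ref{L4.4} and by the left side.

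The boundary terms \((c_{i}+w_{d})\eta(i,t)\theta_{t}(i,t)\) and the \(\eta\)-parts of the cubic term are treated the same way: write \(\eta\theta_{t}=\frac{d}{dt}(\eta\theta)-\eta_{t}\theta\) at \(x=i\), then use Lemma \ref{L4.3}.

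The nonlinearity \((ww_{x}-w_{h}w_{hx},\theta_{t})\) is split as in \eqref{4.12}:
\[
\big(w_{x}(\eta-\theta),\theta_{t}\big)+(w_{h}\eta_{x},\theta_{t})-(w_{h}\theta_{x},\theta_{t}).
\]
These are bounded by \(C\norm{w_{x}}_{\infty}^{2}(\norm{\eta}^{2}+\norm{\theta}^{2})+C\norm{w_{h}}_{\infty}^{2}\norm{\theta_{x}}^{2}+\frac18\norm{\theta_{t}}^{2}\), plus the \(\eta_{x}\)-term. Here \(w_{h}=\tilde w_{h}+\theta\), and the \(\eta_{x}\)-term is again handled by the time-derivative trick. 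The required uniform bounds on \(\norm{w}_{2}\), \(\norm{|w_{h}|}\), \(\norm{\tilde w_{h}}_{1,\infty}\) come from Theorem \ref{thm2} and Lemma \ref{L4.1}.

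Next I multiply by \(2e^{2\alpha t}\) and integrate over \([0,t]\). The factor \(2\alpha e^{2\alpha t}(\nu\norm{\theta_{x}}^{2}+\dots)\) is controlled by the integral bounds of Lemma \ref{L4.4}. Since \(\theta(0)=0\), there are no initial contributions. Gronwall's inequality is then applied with kernel \(C(\norm{w}_{2}^{2}+\norm{|w_{h}|}^{2}+w_{t}^{2}(i)+\dots)\), whose time integral is finite by Theorem \ref{thm2} and Lemma \ref{L4.1}. The forcing integrals are \(O(h^{4})\) by Lemmas \ref{L4.2}--\ref{L4.3} together with \(\int_{0}^{t}e^{2\alpha s}(\norm{w}_{2}^{2}+\norm{w_{t}}_{2}^{2})ds\le C\norm{w_{0}}_{3}^{2}\). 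Multiplying by \(e^{-2\alpha t}\) gives the claim.

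The main obstacle is keeping full \(h^{2}\) order against terms involving \(\eta_{x}\) and the boundary cubic mixed terms. The time-derivative-plus-integration-by-parts device is essential there. A second delicate point is absorbing the cubic \(\theta^{3}\theta_{t}\) and \(w_{h}\theta^{2}\theta_{t}\) terms, which needs the \(L^{\infty}\) bounds on \(w_{h}(i,t)\) and \(\theta^{4}(i,s)\) from Lemmas \ref{L4.1} and \ref{L4.4}.
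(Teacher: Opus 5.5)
Your plan is essentially the paper's proof. Both test \eqref{4.7} with $\theta_t$, rewrite the memory term through \eqref{4.8}, turn every $\eta$-term multiplying $\theta_t$ (the term $w_d(\eta_x,\theta_t)$, the term $(\tilde w_h\eta_x,\theta_t)$ and the boundary terms) into an exact time derivative plus $O(h^2)$ remainders, and close with Lemma \ref{L4.4}, Theorem \ref{thm2} and Gronwall.

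The one step to sharpen is the mixed cubic boundary term. Point values $\theta_t(i,t)$ cannot be absorbed by Young's inequality, and Lemma \ref{L4.1} gives no pointwise-in-time bound on $w_h(i,t)$. So write the term via $w_h=\tilde w_h+\theta$ as $\frac{2}{3c_i}(\tilde w_h^2\theta+\tilde w_h\theta^2)\theta_t$ at $x=i$, and convert it into time derivatives of $\tilde w_h^2\theta^2$ and $\tilde w_h\theta^3$. The remainders then involve only $\tilde w_{ht}$, $\theta^2(i,s)$ and $\theta^4(i,s)$, which are integrable by Lemma \ref{L4.4}. Finally use $\theta^4+4\tilde w_h\theta^3+6\tilde w_h^2\theta^2\ge\frac13\theta^4$; this is exactly what leaves the reduced coefficient $\frac{2}{27c_i}$.
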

\begin{proof}
	Choose \(\phi=\theta_{t}\) in \eqref{4.7} to obtain
\begin{align}
	\label{4.14}
	\frac{1}{2}\frac{d}{dt}\Big(\nu \norm{\theta_{x}}^{2}&+ \sum_{i=0}^{1}\Big((c_{i}+w_{d})\theta^{2}(i,t)+\frac{1}{9c_{i}}\theta^{4}(i,t)\Big)\Big)+\norm{\theta_{t}}^{2}
	\nonumber\\&=-\rho(\zeta, \theta_{xt})+(\eta_{t}-\lambda \eta, \theta_{t})+w_{d}(\eta_{x}-\theta_{x}, \theta_{t})+(ww_{x}-w_{h}w_{hx}, \theta_{t})
	\nonumber\\&\quad +\sum_{i=0}^{1}\Big((c_{i}+w_{d})\eta(i,t)\theta_{t}(i,t)+\frac{2}{9c_{i}}\eta^{3}(i,t)\theta_{t}(i,t)\Big)
	\nonumber\\&\quad +\sum_{i=0}^{1}\frac{2}{3c_{i}}\Big(w(i,t)\eta(i,t)\big(w(i,t)-\eta(i,t)\big)\theta_{t}(i,t)\Big)
	\nonumber\\&\quad -\sum_{i=0}^{1}\frac{2}{3c_{i}}\Big(w_{h}(i,t)\theta(i,t)\big(w_{h}(i,t)-\theta(i,t)\big)\theta_{t}(i,t)\Big).
\end{align}
Also, from \eqref{4.8}, we can write
\begin{align}
	\label{4.15}
	(\zeta, \theta_{xt})=\delta(\zeta, \theta_{x})-\norm{\theta_{x}}^{2}+\frac{d}{dt}(\zeta, \theta_{x}).
\end{align}
From \eqref{4.14} and \eqref{4.15}, we arrive at 
\begin{align}
	\label{4.16}
	\frac{1}{2}\frac{d}{dt}\Big(\nu \norm{\theta_{x}}^{2}&+ \sum_{i=0}^{1}\Big((c_{i}+w_{d})\theta^{2}(i,t)+\frac{1}{9c_{i}}\theta^{4}(i,t)\Big)\Big)
	\nonumber\\&=-\rho\frac{d}{dt}(\zeta, \theta_{x})+\rho \norm{\theta_{x}}^{2}-\rho \delta(\zeta, \theta_{x}) +(\eta_{t}-\lambda \eta, \theta_{t})+w_{d}(\eta_{x}-\theta_{x}, \theta_{t})
	\nonumber\\&\quad +(ww_{x}-w_{h}w_{hx}, \theta_{t})
	\nonumber\\&\quad +\sum_{i=0}^{1}\Big((c_{i}+w_{d})\eta(i,t)\theta_{t}(i,t)+\frac{2}{9c_{i}}\eta^{3}(i,t)\theta_{t}(i,t)\Big)
	\nonumber\\&\quad +\sum_{i=0}^{1}\frac{2}{3c_{i}}\Big(w(i,t)\eta(i,t)\big(w(i,t)-\eta(i,t)\big)\theta(i,t)\Big)
	\nonumber\\&\quad -\sum_{i=0}^{1}\frac{2}{3c_{i}}\Big(w_{h}(i,t)\theta(i,t)\big(w_{h}(i,t)-\theta(i,t)\big)\theta_{t}(i,t)\Big).
\end{align}
The fifth term on the right hand side of \eqref{4.16} is given by
\begin{align*}
	w_{d}(\eta_{x}-\theta_{x}, \theta_{t})&=w_{d}\frac{d}{dt}\Big(\eta(1,t)\theta(1,t)-\eta(0,t)\theta(0,t)\Big)-w_{d}\Big(\eta_{t}(1,t)\theta(1,t)-\eta_{t}(0,t)\theta(0,t)\Big)
	\\&\quad -w_{d}\frac{d}{dt}(\eta, \theta_{x})+w_{d}(\eta_{t}, \theta_{x})-w_{d}(\theta_{x}, \theta_{t}),
	\\&\leq w_{d}\frac{d}{dt}\Big(\eta(1,t)\theta(1,t)-\eta(0,t)\theta(0,t)\Big)-w_{d}\frac{d}{dt}(\eta, \theta_{x})+C\Big(\eta_{t}^{2}(1,t)+\eta_{t}^{2}(0,t)+\norm{\eta_{t}}^{2}\Big)
	\\&\quad + C\norm{\theta_{x}}^{2}+\epsilon\norm{\theta_{t}}^{2}+\theta^{2}(0,t)+\theta^{2}(1,t),
\end{align*}
where \(\epsilon>0\) will be chosen later.

The sixth term on the right hand side of \eqref{4.16} can be written as
\begin{align}
	\label{4.17}
	(ww_{x}-w_{h}w_{hx}, \theta_{t})=\Big(w_{x}\big(\eta-\theta\big), \theta_{t}\Big)+(w_{h}\eta_{x}, \theta_{t})-(w_{h}\theta_{x}, \theta_{t}).
\end{align}
The first term on the right hand side of \eqref{4.17} is bounded by 
\begin{align*}
	\Big(w_{x}\big(\eta-\theta\big), \theta_{t}\Big)\leq C\norm{w_{x}}_{\infty}^{2}\norm{\eta}^{2}+C\norm{w_{x}}_{\infty}^{2}\norm{\theta}^{2}+\epsilon\norm{\theta_{t}}^{2}.
\end{align*}
Setting \(w_{h}=\theta+\tilde{w}_{h}\), the second term on the right hand side of \eqref{4.17} yields
\begin{align*}
	(w_{h}\eta_{x}, \theta_{t})&=(\theta\eta_{x}, \theta_{t})+(\tilde{w}_{h}\eta_{x}, \theta_{t}),
	\\&=(\theta\eta_{x}, \theta_{t})+\tilde{w}_{h}(1,t)\eta(1,t)\theta_{t}(1,t)-\tilde{w}_{h}(0,t)\eta(0,t)\theta_{t}(0,t)-(\tilde{w}_{hx}\eta, \theta_{t})-(\tilde{w}_{h}\eta, \theta_{xt}),
	\\&=(\theta\eta_{x}, \theta_{t})+\frac{d}{dt}\Big(\tilde{w}_{h}(1,t)\eta(1,t)\theta(1,t)\Big)-\big(\tilde{w}_{h}(1,t)\eta(1,t)\big)_{t}\theta(1,t)-\frac{d}{dt}\Big(\tilde{w}_{h}(0,t)\eta(0,t)\theta(0,t)\Big)
	\\&\quad +\big(\tilde{w}_{h}(0,t)\eta(0,t)\big)_{t}\theta(0,t)-(\tilde{w}_{hx}\eta, \theta_{t})-\frac{d}{dt}(\tilde{w}_{h}\eta, \theta_{x})+((\tilde{w}_{h}\eta)_{t}, \theta_{x}),
	\\&\leq C\norm{|\theta|}^{2}\norm{\eta_{x}}^{2}+\epsilon\norm{\theta_{t}}^{2}+C\norm{\theta_{x}}^{2}+C\Big(\norm{\tilde{w}_{hx}}_{\infty}^{2}\norm{\eta}^{2}+\norm{\tilde{w}_{ht}}_{\infty}^{2}\norm{\eta_{t}}^{2}\Big)
	\\&\quad +C \norm{\tilde{w}_{ht}}_{\infty}^{2}\big(\eta_{t}^{2}(1,t)+\eta_{t}^{2}(0,t)\big)+\theta^{2}(1,t)+\theta^{2}(0,t)-\frac{d}{dt}(\tilde{w}_{h}\eta, \theta_{x})
	\\&\quad +\frac{d}{dt}\Big(\tilde{w}_{h}(1,t)\eta(1,t)\theta(1,t)\Big)-\frac{d}{dt}\Big(\tilde{w}_{h}(0,t)\eta(0,t)\theta(0,t)\Big).
\end{align*}
The third term on the right hand side of \eqref{4.17} is estimated by 
\begin{align*}
	-(w_{h}\theta_{x}, \theta_{t})\leq C\norm{|w_{h}|}^{2}\norm{\theta_{x}}^{2}+ \epsilon\norm{\theta_{t}}^{2}.
\end{align*}
Now, the seventh term on the right hand side of \eqref{4.16} is deduced by 
\begin{align*}
	(c_{i}+w_{d})\eta(i,t)\theta_{t}(i,t)+\frac{2}{9c_{i}}\eta^{3}(i,t)\theta_{t}(i,t)&\leq (c_{i}+w_{d})\frac{d}{dt}\Big(\eta(i,t)\theta(i,t)\Big)+\frac{2}{9c_{i}}\frac{d}{dt}\Big(\eta^{3}(i,t)\theta(i,t)\Big)
	\\&\quad+C\eta_{t}^{2}(i,t)+\theta^{2}(i,t)+C\eta_{t}^{2}(i,t)\eta^{2}(i,t)+C\eta^{2}(i,t)\theta^{2}(i,t),
\end{align*}
where \(i=0,1.\)

On the right hand side of \eqref{4.16}, the eighth term is given by
\begin{align*}
	\frac{2}{3c_{i}}\Big(w(i,t)\eta(i,t)\big(w(i,t)-\eta(i,t)\big)\theta_{t}(i,t)\Big)&\leq \frac{2}{3c_{i}}\frac{d}{dt}\Big(w^{2}(i,t)\eta(i,t)\theta(i,t)\Big)-\frac{2}{3c_{i}}\frac{d}{dt}\Big(w(i,t)\eta^{2}(i,t)\theta(i,t)\Big)
	\\&\quad+C\Big(\eta^{2}(i,t)+\eta_{t}^{2}(i,t)+\theta^{2}(i,t)\Big),
\end{align*}
where \(i=0,1.\)

Setting \(w_{h}=\theta+\tilde{w}_{h}\), the last term on the right hand side of \eqref{4.16} yields
\begin{align*}
	\frac{2}{3c_{i}}\Big(w_{h}(i,t)\theta(i,t)\big(w_{h}(i,t)-\theta(i,t)\big)\theta_{t}(i,t)\Big)&=\frac{2}{3c_{i}}\tilde{w}_{h}^{2}(i,t)\theta(i,t)\theta_{t}(i,t)-\frac{2}{3c_{i}}\tilde{w}_{h}(i,t)\theta^{2}(i,t)\theta_{t}(i,t)
	\\&\leq -\frac{1}{3c_{i}}\frac{d}{dt}\Big(\tilde{w}_{h}^{2}(i,t)\theta^{2}(i,t) \Big)-\frac{2}{9c_{i}}\frac{d}{dt}\Big(\tilde{w}_{h}(i,t)\theta^{3}(i,t) \Big)
	\\&\quad +C\Big(\norm{\tilde{w}_{h}}_{\infty}^{2}+\norm{\tilde{w}_{ht}}_{\infty}^{2} \Big)\theta^{2}(i,t)+C\theta^{4}(i,t),
\end{align*}
where \(i=0,1.\) 

Substituting above these estimates in \eqref{4.16} with \(\epsilon=\frac{1}{10}\), it follows from Theorem \ref{thm2} with \(\norm{\tilde{w}_{h}}_{\infty}\leq C\norm{w}_{1}\) and \(\norm{\tilde{w}_{hx}}_{\infty}\leq C\norm{w}_{2}\)
\begin{align*}
	\frac{1}{2}\frac{d}{dt}\Big(\nu \norm{\theta_{x}}^{2}&+ \sum_{i=0}^{1}\Big((c_{i}+w_{d})\theta^{2}(i,t)+\frac{1}{9c_{i}}\theta^{4}(i,t)\Big)\Big)+\frac{1}{2}\norm{\theta_{t}}^{2}
	\nonumber\\&\leq C\Big(\norm{|\theta|}^{2}\norm{\eta_{x}}^{2}+\norm{\eta}^{2}+\norm{\eta_{t}}^{2}+\norm{\theta_{x}}^{2}+\sum_{i=0}^{1}\big(\eta^{2}(i,t)+\eta_{t}^{2}(i,t)+\eta_{t}^{2}(i,t)\eta^{2}(i,t)\big) \Big)
	\nonumber\\&\quad +C \sum_{i=0}^{1}\Big(\eta^{2}(i,t)\theta^{2}(i,t)+\theta^{2}(i,t)+\theta^{4}(i,t) \Big)+ C\norm{|w_{h}|}^{2}\norm{\theta_{x}}^{2}+C\norm{\zeta}^{2}
	\nonumber\\&\quad + w_{d}\frac{d}{dt}\Big(\eta(1,t)\theta(1,t)-\eta(0,t)\theta(0,t)\Big)-w_{d}\frac{d}{dt}(\eta, \theta_{x})-\frac{d}{dt}(\tilde{w}_{h}\eta, \theta_{x})
	\nonumber\\&\quad +\frac{d}{dt}\Big(\tilde{w}_{h}(1,t)\eta(1,t)\theta(1,t)\Big)-\frac{d}{dt}\Big(\tilde{w}_{h}(0,t)\eta(0,t)\theta(0,t)\Big)
	\nonumber\\&\quad+\sum_{i=0}^{1}\bigg((c_{i}+w_{d})\frac{d}{dt}\Big(\eta(i,t)\theta(i,t)\Big)+\frac{2}{9c_{i}}\frac{d}{dt}\Big(\eta^{3}(i,t)\theta(i,t)\Big)\bigg)
	\nonumber\\&\quad+\sum_{i=0}^{1}\bigg( \frac{2}{3c_{i}}\frac{d}{dt}\Big(w^{2}(i,t)\eta(i,t)\theta(i,t)\Big)-\frac{2}{3c_{i}}\frac{d}{dt}\Big(w(i,t)\eta^{2}(i,t)\theta(i,t)\Big)\bigg)
	\nonumber\\&\quad-\sum_{i=0}^{1}\bigg(\frac{1}{3c_{i}}\frac{d}{dt}\Big(\tilde{w}_{h}^{2}(i,t)\theta^{2}(i,t) \Big)+\frac{2}{9c_{i}}\frac{d}{dt}\Big(\tilde{w}_{h}(i,t)\theta^{3}(i,t) \Big)\bigg)-\rho\frac{d}{dt}(\zeta, \theta_{x}).	
\end{align*}
Multiplying by $e^{2\alpha t}$ and integrating with respect to time over \([0,t]\), we get from Lemmas \ref{L2.1}-\ref{L2.2}
\begin{align*}
\nu e^{2\alpha t}\norm{\theta_{x}}^{2}&+ \sum_{i=0}^{1}e^{2\alpha t}\Big((c_{i}+w_{d})\theta^{2}(i,t)+\frac{1}{9c_{i}}\theta^{4}(i,t)\Big)+\int_{0}^{t}e^{2\alpha s}\norm{\theta_{t}(s)}^{2}ds \nonumber\\&\leq C\int_{0}^{t}e^{2\alpha s}\bigg(\norm{|\theta(s)|}^{2}\norm{\eta_{x}(s)}^{2}+\norm{\eta(s)}^{2}+\norm{\eta_{t}(s)}^{2}+\norm{\theta_{x}(s)}^{2}
\nonumber\\&\quad+\sum_{i=0}^{1}\big(\eta^{2}(i,s)+\eta_{t}^{2}(i,s)+\eta_{t}^{2}(i,s)\eta^{2}(i,s)\big) \bigg)ds+2w_{d}\Big(\eta(1,t)\theta(1,t)-\eta(0,t)\theta(0,t)\Big)
\nonumber\\&\quad +C(\alpha) \int_{0}^{t}e^{2\alpha s}\Big(\nu \norm{\theta_{x}(s)}^{2}+ \sum_{i=0}^{1}\Big(\theta^{2}(i,s)+\theta^{4}(i,s)+\eta^{4}(i,s)+\eta^{6}(i,s)\Big)\Big)ds
\nonumber\\&\quad -2w_{d}e^{2\alpha t}(\eta, \theta_{x})-2e^{2\alpha t}(\tilde{w}_{h}\eta, \theta_{x})
 +2e^{2\alpha t}\Big(\tilde{w}_{h}(1,t)\eta(1,t)\theta(1,t)\Big)-2e^{2\alpha t}\Big(\tilde{w}_{h}(0,t)\eta(0,t)\theta(0,t)\Big)
\nonumber\\&\quad+2e^{2\alpha t}\sum_{i=0}^{1}\bigg((c_{i}+w_{d})\Big(\eta(i,t)\theta(i,t)\Big)+\frac{2}{9c_{i}}\Big(\eta^{3}(i,t)\theta(i,t)\Big)\bigg)
\nonumber\\&\quad+2e^{2\alpha t}\sum_{i=0}^{1}\bigg( \frac{2}{3c_{i}}\Big(w^{2}(i,t)\eta(i,t)\theta(i,t)\Big)-\frac{2}{3c_{i}}\Big(w(i,t)\eta^{2}(i,t)\theta(i,t)\Big)\bigg)
\nonumber\\&\quad-2e^{2\alpha t}\sum_{i=0}^{1}\bigg(\frac{1}{3c_{i}}\Big(\tilde{w}_{h}^{2}(i,t)\theta^{2}(i,t) \Big)+\frac{2}{9c_{i}}\Big(\tilde{w}_{h}(i,t)\theta^{3}(i,t) \Big)\bigg)
\nonumber\\&\quad+C\int_{0}^{t}e^{2\alpha s}\Big(\norm{|w_{h}(s)|}^{2}\norm{\theta_{x}(s)}^{2}+\norm{\zeta(s)}^{2}\Big)ds-\rho e^{2\alpha t}(\zeta, \theta_{x}).
\end{align*}
Again, using the Cauchy-Schwarz inequality and Young's inequality, we obtain from Lemmas \ref{L2.1}-\ref{L2.2} with $\alpha=0$
\begin{align*}
	\frac{\nu}{2}e^{2\alpha t} \norm{\theta_{x}}^{2}&+e^{2\alpha t} \sum_{i=0}^{1}\Big(\frac{1}{2}(c_{i}+w_{d})\theta^{2}(i,t)+\frac{1}{27c_{i}}\theta^{4}(i,t)\Big)+\int_{0}^{t}e^{2\alpha s}\norm{\theta_{t}(s)}^{2}ds \nonumber\\&\leq C\int_{0}^{t}e^{2\alpha s}\bigg(\norm{|\theta(s)|}^{2}\norm{\eta_{x}(s)}^{2}+\norm{\eta(s)}^{2}+\norm{\eta_{t}(s)}^{2}+\norm{\theta_{x}(s)}^{2}
	\nonumber\\&\quad+\sum_{i=0}^{1}\big(\eta^{2}(i,s)+\eta_{t}^{2}(i,s)+\eta_{t}^{2}(i,s)\eta^{2}(i,s)\big) \bigg)ds+Ce^{2\alpha t}\norm{\eta}^{2}+Ce^{2\alpha t}\norm{\zeta}^{2}
	\nonumber\\&\quad+Ce^{2\alpha t}\sum_{i=0}^{1}\Big(\eta^{2}(i,t)+\eta^{4}(i,t)+\eta^{6}(i,t)\Big)+C\int_{0}^{t}e^{2\alpha s}\Big(\norm{|w_{h}(s)|}^{2}\norm{\theta_{x}(s)}^{2}+\norm{\zeta(s)}^{2}\Big)ds.
\end{align*}
Using Gronwall's inequality with Theorem \ref{thm2} and Lemma \ref{L4.1} with $\alpha=0$, the proof is completed after multiplying by $e^{-2\alpha t}$ in the resulting inequality using Lemmas \ref{L4.2}-\ref{L4.4}.
\end{proof}
\begin{remark}
  From Lemma \ref{L4.5}, we can write using Poincar\'e inequality
  \begin{align*}
     \norm{\theta}_{\infty}\leq C(\norm{w_{0}}_{3}) h^{2} e^{C\norm{w_{0}}}.
  \end{align*}
\end{remark}
The following theorem shows the optimal order of convergence for the state variable.
\begin{theorem}
	\label{th4.1}
	Under the assumptions \((A1)\) and \((A2)\), there exists a constant \(C=C(\norm{w_{0}}_{3})\) independent of \(h\) such that 
	\begin{align*}
	\norm{w(t)-w_{h}(t)}_{j}\leq C h^{2-j} \exp(C\norm{w_{0}}_{2}), 
	\end{align*}
where \(j=0,1\) and \begin{align*}
	\norm{w(t)-w_{h}(t)}_{\infty}\leq C h^{2} \exp(C\norm{w_{0}}_{2}).
\end{align*}
\end{theorem}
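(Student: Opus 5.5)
The plan is to use the splitting $e=w-w_h=\eta-\theta$ introduced above and the triangle inequality
\[
\norm{w(t)-w_h(t)}_j\le \norm{\eta(t)}_j+\norm{\theta(t)}_j,\qquad j=0,1,
\]
and then to bound the two pieces separately.

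\textbf{The projection error $\eta$.} Lemma \ref{L4.2} gives $\norm{\eta}_j\le Ch^{2-j}\big(\norm{w}_2+\int_0^t\norm{w(s)}_2ds\big)$. Theorem \ref{thm2} gives $\norm{w(t)}_2^2\le Ce^{-2\alpha t}\norm{w_0}_3^2$. Hence $\int_0^t\norm{w(s)}_2ds\le C\norm{w_0}_3\int_0^\infty e^{-\alpha s}ds\le C\alpha^{-1}\norm{w_0}_3$. Alternatively, one can use Cauchy--Schwarz with the weighted time integral in Theorem \ref{thm2}. Either way, $\norm{\eta(t)}_j\le C(\norm{w_0}_3)h^{2-j}$ uniformly in $t$.

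\textbf{The discrete error $\theta$.} For $j=0$, Lemma \ref{L4.4} gives $\norm{\theta}\le C(\norm{w_0}_2)h^2\norm{w_0}_3$. For $j=1$, Lemma \ref{L4.5} controls $\nu\norm{\theta_x}^2+\sum_i(c_i+w_d)\theta^2(i,t)$, that is, the norm $\norm{|\theta|}$, which is equivalent to $\norm{\theta}_1$. It gives $\norm{\theta}_1\le C(\norm{w_0}_3)h^2\le C(\norm{w_0}_3)h$, so this piece is even better than needed. The Gronwall factors arising in those lemmas are of the form $\exp\big(C\int_0^t(\norm{w}_2^2+\norm{|w_h|}^2)ds\big)$. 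By Theorem \ref{thm2} and Lemma \ref{L4.1} with $\alpha=0$, these are bounded by $\exp(C\norm{w_0}_2^2)$-type quantities. I will track them explicitly and collect them into the factor $\exp(C\norm{w_0}_2)$, absorbing any remaining polynomial dependence into $C=C(\norm{w_0}_3)$. Together with the $\eta$ bound, this gives the $L^2$ and $H^1$ estimates.

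\textbf{The $L^\infty$ estimate.} This is the main obstacle, since a naive Sobolev embedding from the $H^1$ bound of $e$ only yields $O(h)$. The plan is to split the error again. For $\theta$, use the Poincar\'e--Wirtinger inequality $\norm{\theta}_\infty\le\sqrt2\,\norm{|\theta|}$ together with Lemma \ref{L4.5}, which gives $\norm{\theta}_\infty\le Ch^2$ (this is the preceding Remark). For $\eta$, one needs a maximum-norm estimate of the Ritz--Volterra projection, $\norm{\eta}_\infty\le Ch^2\big(\norm{w}_{2}+\int_0^t\norm{w}_{2}ds\big)$, possibly with $W^{2,\infty}$ norms in place of $\norm{\cdot}_2$. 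In one space dimension I would first try to derive it as follows: write $\eta(x)$ through a Green's function $\psi^x$ of $-\nu\psi_{xx}+\lambda\psi=\delta_x$ with Neumann data. Then mimic the proof of Lemma \ref{L4.3}, using $\psi^x\in W^{1,\infty}$ with $\norm{\psi^x_x-\phi_x}\le Ch^{1/2}$, the $L^2$ and $H^1$ bounds from Lemma \ref{L4.2}, and a Volterra--Gronwall step for the memory term. If this only gives $h^{3/2}$, the fallback is to cite the known $L^\infty$ estimates for the Ritz--Volterra projection in \cite{Yapping1990, doi:10.1137/0727036}. Combining the two pieces with the uniform-in-time bounds for $w$ from Theorem \ref{thm2} yields $\norm{w-w_h}_\infty\le Ch^2\exp(C\norm{w_0}_2)$.
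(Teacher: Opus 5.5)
For $j=0,1$ your argument is exactly the paper's. Its whole proof is the triangle inequality for $e=\eta-\theta$ together with Lemmas \ref{L4.2}, \ref{L4.4} and \ref{L4.5}. Your handling of $\int_0^t\norm{w(s)}_2\,ds$ through the decay in Theorem \ref{thm2}, and of $\norm{\theta}_\infty$ through the Remark after Lemma \ref{L4.5}, is fine. For the $L^\infty$ bound the paper also only says it ``follows from Lemmas \ref{L4.2}--\ref{L4.5}''. Yet none of these lemmas bounds $\norm{\eta}_\infty$ at order $h^2$; Lemma \ref{L4.3} treats only $x=0,1$. So you have correctly identified the one step that needs a real argument, but your plan does not close it.

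In your Green's function representation the leading term is $\nu(\eta_x,(\psi^{x_0}-\phi)_x)$. Since $\psi^{x_0}_x$ jumps at $x_0$, for $x_0$ inside an element $\inf_{\phi\in S_h}\norm{(\psi^{x_0}-\phi)_x}$ is of order $h^{1/2}$. With $\norm{\eta_x}=O(h)$ this gives exactly $h^{3/2}$.

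This is not a weakness of the technique. With only $w\in H^2$, which is all Theorem \ref{thm2} provides, the bound $\norm{\eta}_\infty\le Ch^2(\norm{w}_2+\int_0^t\norm{w}_2\,ds)$ is false in general. Indeed, $\eta=(w-I_hw)+\chi$ with $\norm{\chi}_\infty=O(h^2)$ (see below), while $\norm{w-I_hw}_{L^\infty(I_j)}\le Ch_j^{3/2}\norm{w_{xx}}_{L^2(I_j)}$ is attained when $w_{xx}$ concentrates on one element. For the same reason your fallback breaks at its last step. An $h^2$ max-norm projection estimate needs $W^{2,\infty}$ norms of $w$, and ``combining with Theorem \ref{thm2}'' does not supply them.

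The missing ingredient is a uniform bound on $\norm{w(t)}_{W^{2,\infty}}\le C\norm{w(t)}_3$. Differentiating \eqref{eq:4.1} in $x$ gives $\nu w_{xxx}+\rho\int_0^te^{-\delta(t-s)}w_{xxx}(s)\,ds=F:=w_{xt}+(ww_x)_x+w_dw_{xx}$. Its solution is $w_{xxx}=\frac1\nu F-\frac{\rho}{\nu^2}\int_0^te^{-(\delta+\rho/\nu)(t-s)}F(s)\,ds$. Moreover $\norm{F(t)}$ is bounded by Theorem \ref{thm2}, which controls $\norm{w_t}_1$ and $\norm{w}_2$.

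You can then finish directly in one dimension. Set $\chi=I_hw-\tilde w_h\in S_h$. Since $((w-I_hw)_x,\phi_x)=0$ for all $\phi\in S_h$, \eqref{4.4} becomes $\nu(\chi_x,\phi_x)+\rho\big(\int_0^te^{-\delta(t-s)}\chi_x(s)\,ds,\phi_x\big)+\lambda(\chi,\phi)=\lambda(I_hw-w,\phi)$. Taking $\phi=\chi$ and applying a Volterra--Gronwall step gives $\norm{\chi}_\infty\le C\norm{\chi}_1\le Ch^2\sup_s\norm{w(s)}_2$. Finally, $\norm{w-I_hw}_\infty\le Ch^2\norm{w_{xx}}_\infty\le Ch^2\norm{w}_3$.
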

\begin{proof}
	With the application of the triangle inequality, the proof follows from Lemmas \ref{L4.2}- \ref{L4.5}.
\end{proof}
Denotes \(e_{i}(t):=v_{i}(t)-v_{ih}(t)\), where \(v_{ih}(t):=\frac{1}{\nu}\bigg((c_{i} + w_{d})w_{h}(i,t) + \frac{2}{9c_{i}}w_{h}^3(i,t) - \rho z_{h}(i,t)\bigg), \) and \(z_{h}(i,t)=\int_{0}^{t}e^{-\delta(t-s)}w_{hx}(i,s)ds, i=0,1.\)

The following theorem presents the error estimate of the control inputs.
\begin{theorem}
	\label{th4.2}
Let assumptions \((A1)\) and \((A2)\) be true. Then, there exists a constant \(C\) independent of \(h\) such that
	\begin{align*}
		|e_{i}(t)|\leq  C(\norm{w_{0}}_{3}) h^{2},
	\end{align*}
where \(i=0,1.\)
\end{theorem}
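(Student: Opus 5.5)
We write the control error directly from the definitions of \(v_{i}\) in \eqref{v0}--\eqref{v1} and of \(v_{ih}\). Up to sign conventions at \(i=1\),
\begin{align*}
\nu e_{i}(t)=(c_{i}+w_{d})\big(w(i,t)-w_{h}(i,t)\big)+\frac{2}{9c_{i}}\big(w^{3}(i,t)-w_{h}^{3}(i,t)\big)-\rho\big(z(i,t)-z_{h}(i,t)\big),
\end{align*}
and we bound the three groups separately. The first two are pointwise state errors at the boundary. Writing \(w-w_{h}=\eta-\theta\), we use Lemma \ref{L4.3} for \(|\eta(i,t)|\leq Ch^{2}\big(\norm{w}_{2}+\int_{0}^{t}\norm{w(s)}_{2}ds\big)\), and Lemma \ref{L4.5} (or the subsequent remark and Theorem \ref{th4.1} in the \(L^{\infty}\)-norm) for \(|\theta(i,t)|\leq Ch^{2}\). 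Theorem \ref{thm2} bounds \(\norm{w}_{2}\) and its time integral by \(C(\norm{w_{0}}_{3})\), so \(|w(i,t)-w_{h}(i,t)|\leq C(\norm{w_{0}}_{3})h^{2}\). For the cubic term we factor
\begin{align*}
w^{3}-w_{h}^{3}=(w-w_{h})(w^{2}+ww_{h}+w_{h}^{2}).
\end{align*}
We bound \(|w(i,t)|\leq\sqrt{2}\norm{|w|}\) by Theorem \ref{thm2}, and \(|w_{h}(i,t)|\leq|w(i,t)|+|w(i,t)-w_{h}(i,t)|\), which is uniformly bounded. This gives \(O(h^{2})\) for the cubic term as well.

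The memory term is the main obstacle. The quantity \(z(i,t)-z_{h}(i,t)=\int_{0}^{t}e^{-\delta(t-s)}(\eta_{x}-\theta_{x})(i,s)\,ds\) involves a pointwise derivative of the error, which for piecewise linears is only \(O(h)\) in general. To avoid this, we use \eqref{eq:4.2} and \eqref{4.2}, which give \(z-z_{h}=\int_{0}^{t}e^{-\delta(t-s)}\partial_{x}(w-w_{h})(s)\,ds\). For the \(\theta\)-part, \(\zeta(i,t)\), we plan to eliminate the boundary value through the error equation \eqref{4.7}. Taking \(\phi\) to be the hat function at the node \(x_{i}\), \eqref{4.7} expresses \(\rho\zeta(i,t)\) in terms of \(\theta(i,t)\), \(\nu\theta_{x}\) on the end element, \(\int\theta_{t}\phi\), \(\rho\int\zeta\phi_{x}\), and the right-hand side terms. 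Each of these is then bounded by \(O(h^{2})\) using Lemmas \ref{L4.4}--\ref{L4.5} together with the superconvergent bounds on \(\theta\) in \(H^{1}\).

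Alternatively, and more simply, we would mimic the duality argument of Lemma \ref{L4.3} applied to the Ritz--Volterra projection \eqref{4.4}. The idea is to treat the combination \(\nu\eta_{x}+\rho\int_{0}^{t}e^{-\delta(t-s)}\eta_{x}(s)ds\) at the boundary as a ``flux'', which is exactly what enters the controller through the Neumann data. The flux error is \(O(h^{2})\) by testing \eqref{4.4} with the boundary hat function and using Lemma \ref{L4.2} for \(\eta\) and \(\eta_{t}\). Finally, we combine all pieces with the triangle inequality, absorb the factor \(1/\nu\), and collect the constants depending on \(\norm{w_{0}}_{3}\). This gives \(|e_{i}(t)|\leq C(\norm{w_{0}}_{3})h^{2}\) for \(i=0,1\).
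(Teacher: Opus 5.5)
Your treatment of the linear and cubic boundary terms is fine and agrees with the paper. The gap is the memory term, and neither of your two routes closes it.

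In route (b), testing \eqref{4.4} with the hat function $\phi_0$ of the node $x_0=0$ only controls an element average. With $F:=\nu\eta_x+\rho\int_0^t e^{-\delta(t-s)}\eta_x(s)\,ds$ one gets $\frac{1}{h_1}\int_{I_1}F\,dx=\lambda(\eta,\phi_0)=O(h^{5/2})$, which says nothing about $F(0,t)$. Since $\tilde{w}_{hx}$ is constant on $I_1$, $F(x,t)-F(0,t)=\nu\big(w_x(x,t)-w_x(0,t)\big)+\rho\big(z(x,t)-z(0,t)\big)$. Its mean over $I_1$ is $\frac{h_1}{2}(\nu w_{xx}+\rho z_x)(0,t)+o(h)=\frac{h_1}{2}(w_t+ww_x+w_dw_x)(0,t)+o(h)$ by \eqref{eq:4.1}. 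So the pointwise flux error is generically of exact order $h$. Besides, the controller contains only $\rho(z-z_h)(i,t)$, not the whole flux, so $\nu\eta_x(i,t)$ would be left over anyway.

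Route (a) needs the point value $\theta_x|_{I_1}$. Lemma \ref{L4.5} with an inverse inequality gives only $\big|\theta_x|_{I_1}\big|\leq h_1^{-1/2}\norm{\theta_x}\leq Ch^{3/2}$, and this route leaves the $\eta$-part untouched.

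Your own diagnosis is in fact decisive. The hat-function identity above is a Volterra equation for $D(t):=(\eta(x_1,t)-\eta(0,t))/h_1$, giving $D=O(h^{5/2})$. Hence $\eta_x(0,t)=-\frac{h_1}{2}w_{xx}(0,t)+o(h)$, and together with the $h^{3/2}$ bound on $\theta_x|_{I_1}$,
\[
\nu e_0(t)=O(h^2)-\rho\big(z(0,t)-z_h(0,t)\big)=\frac{\rho h_1}{2}z_x(0,t)+o(h).
\]
So with $z_h(i,t)=\int_0^t e^{-\delta(t-s)}w_{hx}(i,s)\,ds$ taken literally, $e_i(t)$ is generically of exact order $h$ when $\rho\neq 0$, and no estimate can give $h^2$.

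The paper never touches boundary derivatives. It writes $z(i,t)-z_h(i,t)=\int_0^t e^{-\delta(t-s)}e_i(s)\,ds$, so that
\[
e_i(t)+\frac{\rho}{\nu}\int_0^t e^{-\delta(t-s)}e_i(s)\,ds=\frac{1}{\nu}\Big((c_i+w_d)(w-w_h)(i,t)+\frac{2}{9c_i}(w^3-w_h^3)(i,t)\Big).
\]
It bounds the right-hand side by $Ch^2$ exactly as you do and closes with Gronwall: $|e_i(t)|\leq Ch^2e^{\rho/(\nu\delta)}$. That identity is equivalent to $w_{hx}(i,s)=v_{ih}(s)$. The piecewise linear solution does not satisfy this, because the Neumann condition is only imposed naturally. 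The identity does hold if the discrete boundary memory is defined from the discrete control, $z_h(i,t):=\int_0^t e^{-\delta(t-s)}v_{ih}(s)\,ds$.

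So the idea missing from your proposal, relative to the paper, is to treat the memory term as a Volterra term in $e_i$ itself and apply Gronwall. You should make that redefinition explicit, since under the literal definition the claimed $h^2$ rate fails.
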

\begin{proof}
Since
	\begin{align*}
		e_{i}(t)=\frac{1}{\nu}\bigg((c_{i} + w_{d})\big(w(i,t)-w_{h}(i,t)\big) + \frac{2}{9c_{i}}\big(w^{3}(i,t)-w_{h}^3(i,t)\big) - \rho \int_{0}^{t}e^{-\delta(t-s)}e_{i}(s)ds\bigg),
	\end{align*}
 we can write
\begin{align*}
	e_{i}(t)+\frac{\rho}{\nu}\int_{0}^{t}e^{-\delta(t-s)}e_{i}(s)ds&=\frac{1}{\nu}\bigg((c_{i} + w_{d})\big(w(i,t)-w_{h}(i,t)\big) + \frac{2}{9c_{i}}\big(w^{3}(i,t)-w_{h}^3(i,t)\big)\bigg),
	\\& =\frac{1}{\nu}\bigg((c_{i} + w_{d})\big(\eta(i,t)-\theta(i,t)\big)+\frac{2}{9c_{i}}\big(\eta^{3}(i,t)-\theta^{3}(i,t)+3\tilde{w}_{h}^{2}\eta(i,t)
	\\&\quad +3\tilde{w}_{h}\eta^{2}(i,t)-3\tilde{w}_{h}\theta^{2}(i,t)-3\tilde{w}_{h}^{2}\theta(i,t)\big)\bigg).
\end{align*}
With the help of the Cauchy-Schwarz inequality and Young's inequality, we obtain from Lemmas \ref{L2.1}-\ref{L2.2}
\begin{align*}
	|e_{i}(t)|\leq C\Big(|\eta(i,t)|+|\theta(i,t)|+|\eta(i,t)|^{2}+|\eta(i,t)|^{4}+|\theta(i,t)|^{2}+|\theta(i,t)|^{4}\Big)+\frac{\rho}{\nu}\int_{0}^{t}e^{-\delta(t-s)}|e_{i}(s)|ds.
\end{align*}
Using  Gronwall's inequality and Lemmas \ref{L4.2}-\ref{L4.3} and \ref{L4.5}, we arrive at
\begin{align*}
	|e_{i}(t)|\leq C(\norm{w_{0}}_{3}) h^{2} e^{\frac{\rho (1-e^{-\delta t})}{\nu \delta}}\leq C(\norm{w_{0}}_{3}) h^{2}.
\end{align*}
This completes the proof.
\end{proof}
\section{Numerical Examples.}
\label{5}
In this section, we present some numerical examples that illustrate our theoretical results. We examine the behavior of the state variable, control inputs, and the corresponding order of convergence.

Let  \(t_{n}=nk, \ n=0,1,\ldots, M\), where \(k\) is the time step size and \(M\) is a positive integer. Let \(W^{n}\) be the fully discrete approximation solution of the problem \eqref{1.13}  at \(t=t_{n}.\) Moreover, we denote by $V_{i}^{n}, i=0,1$ the fully discrete approximation of the feedback control input $v_{i}, i=0,1$ at \(t=t_{n}.\)  Suppose that \(\phi\) is a smooth function defined on \([0, \infty)\). We define $\phi(t_{n})=\phi^{n}$ and the difference operator \(\bar{\partial_{t}} {\phi^{n}}:=\frac{\phi^{n}-\phi^{n-1}}{k}\).

Applying the backward Euler method to \eqref{6.1}, we write a fully discrete scheme to seek a sequence $\{W^{n}\}_{n\geq 1}$ in the following form:
\begin{align}
	\label{6.2}
	\nonumber(\bar{\partial_{t}} {W}^{n},\phi) +\nu (W^{n}_{x},\phi_{x})+w_{d}(W^{n}_{x},\phi) +(W^{n}W^{n}_{x},\phi)+ \left((c_{0}+w_{d})W^{n}(0)+  \frac{2}{9c_{0}}(W^{n})^{3}(0)\right)\phi(0)\\ +\left((c_{1}+w_{d})W^{n}(1)+ \frac{2}{9c_{1}}(W^{n})^{3}(1)\right)\phi(1)+\rho (Z^{n}, \phi_{x})=0, 
\end{align}
where \(Z^{n}:=\int_{0}^{t_{n}}e^{-\delta(t_{n}-s)}W^{n}_{x}(s)ds\) and \(Z^{n}\) is approximated by right rectangular rule. We apply Newton's method to solve the nonlinear system \eqref{6.2}, where we use the previous time step \(W^{n}\) as an initial guess to find the current time step \(W^{n+1}\). 
 Since the exact solution of this problem is not known, we consider a refined mesh solution (reference solution)  as the exact solution, which is denoted by $w$.

In the following example, we analyze the behavior of the state variable and control inputs for various values of \(c_{0}\) and \(c_{1}\). Moreover, varying the values of \(c_{0}\), \(c_{1}\), and $\rho$, we investigate the order of convergence for the state variable in the \(L^{2}\) and \(L^{\infty}\)-norms and for the control inputs in the \(L^{\infty}\)-norm.
\begin{example}
	\label{ex1}
	We consider the initial condition \((t=0)\) \(w_{0}(x)=x(x-1)-w_{d}\) with \(w_{d}=3\) and \(x\in[0,1].\) We set the coefficients \(\nu=0.1\), \(\rho=1\), and \(\delta=1\). Further, we choose the spatial step size  $h=\frac{1}{100}$ and temporal step size $k=\frac{1}{100}$.
\end{example}
	 The system \eqref{eq:4.1}-\eqref{eq:4.8} in the time interval \([0, T]\), where \(T>0\), with zero Neumann boundary \((v_{0}(t),v_{1}(t)=0)\) is unstable in the \(L^{2}\)-norm, meaning that the approximate solution in the uncontrolled synthesis, which is represented by ``Uncontrolled solution" in Figure \ref{fig:ex1}(i) does not converge to its steady state solution. But in presence of the feedback control laws \eqref{v0} and \eqref{v1}, the state variable denoted as the `` Controlled solution"  in the same figure with respect to the \(L^{2}\)-norm goes to zero for various values of \(c_{0}\) and \(c_{1}\), which justifies the theoretical result in Lemma \ref{L2.1}. Therefore, the numerical simulation confirms that in the presence of feedback laws \eqref{v0} and \eqref{v1}, the solution of the closed loop system \eqref{eq:1.1}-\eqref{eq:1.4} goes to its steady state solution as time goes to infinity. For numerous values of \(c_{0}\) and \(c_{1}\), Figures \ref{fig:ex1}(ii) and (iii) depict the controller value for the left boundary \((x=0)\) and right boundary \((x=1)\), respectively. We observe that both controllers go to zero as time increases, which verifies the results in Lemma \ref{L2.2}.
	  
	    Next, we examine the order of convergence of the state variable and control inputs for numerous values of $c_{0}, c_{1},$ and $\rho$. We consider the reference solution with $h=\frac{1}{2048}$. From Table \ref{table:1}, we observe that the order of convergence of the state variable is two, and the error of the state variable decreases in both the \(L^{2}\) and \(L^{\infty}\)-norms for various values of \(h\) with the fixed values of \(k\) and \(c_{0}, c_{1}=0.1\). Furthermore, we obtain the same order of convergence for other values of \(c_{0}\) and \( c_{1}\), for example, \(c_{0}, c_{1}=10\). Hence, the order of convergence verifies the result in Theorem \ref{th4.1} for the state variable. Moreover, for varying the values of $\rho$, we get the same order of convergence for the state variable in the $L^{2}$ and $L^{\infty}$-norms. 
	  
	  Tables \ref{table:2} and \ref{table:3} contain the error and order of convergence with the spatial direction for both control inputs, and observe that the order of convergence is two for different values of \(c_{0}\) and \(c_{1}\). In addition, the error decreases in the \(L^{\infty}\)-norm. Therefore, Tables \ref{table:2} and \ref{table:3} justify the theoretical results of Theorem \ref{th4.2} for the control inputs. Moreover,  we notice that the order of convergence of the feedback controller at the left $(x=0)$ boundary and right $(x=1)$ boundary is two in the $L^{\infty}$-norm for various values of $\rho$.
	  
	  \begin{figure}[h]
	  	\centering
	  	(i)\includegraphics[width= 0.44\textwidth]{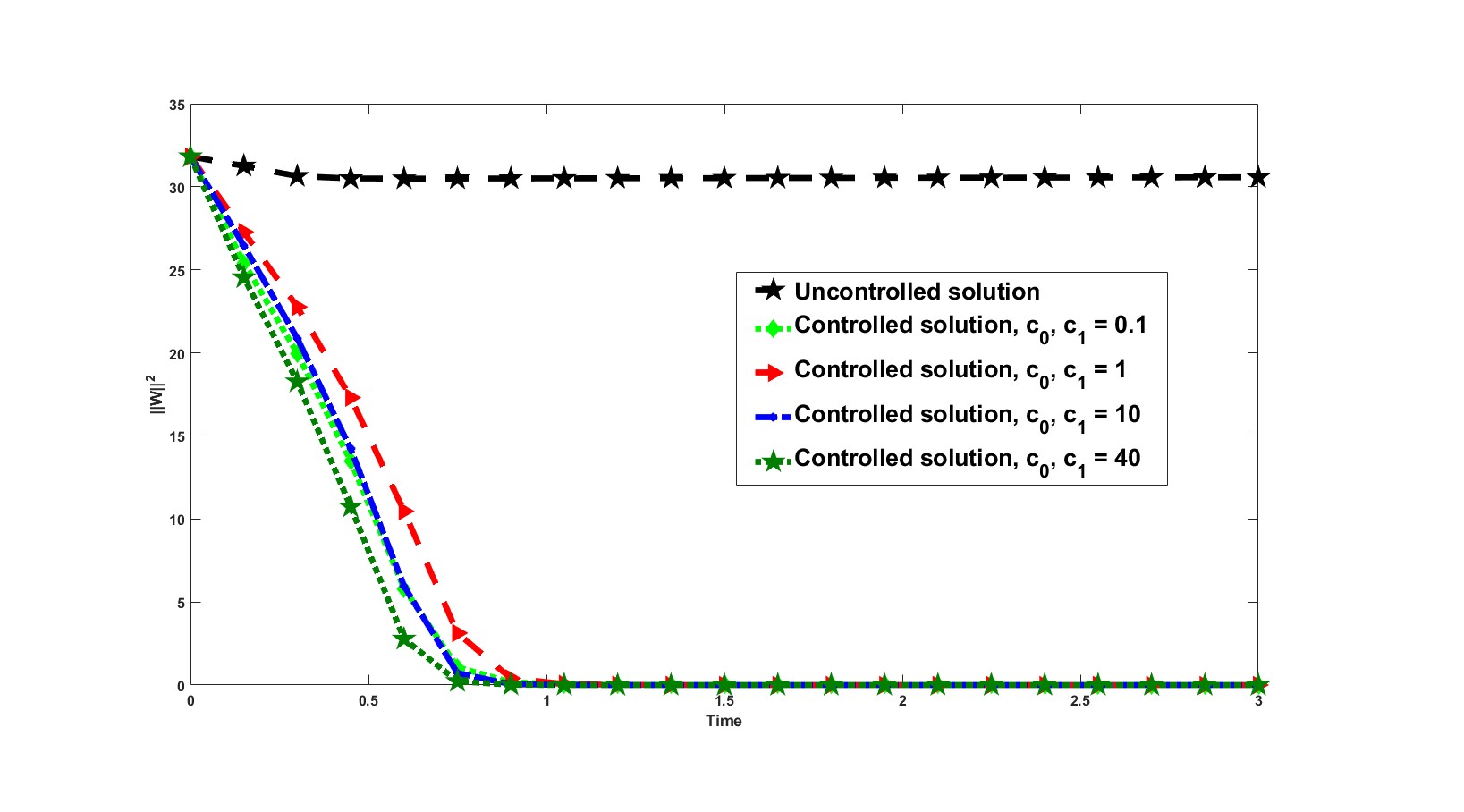}
	  	(ii)\includegraphics[width= 0.45\textwidth]{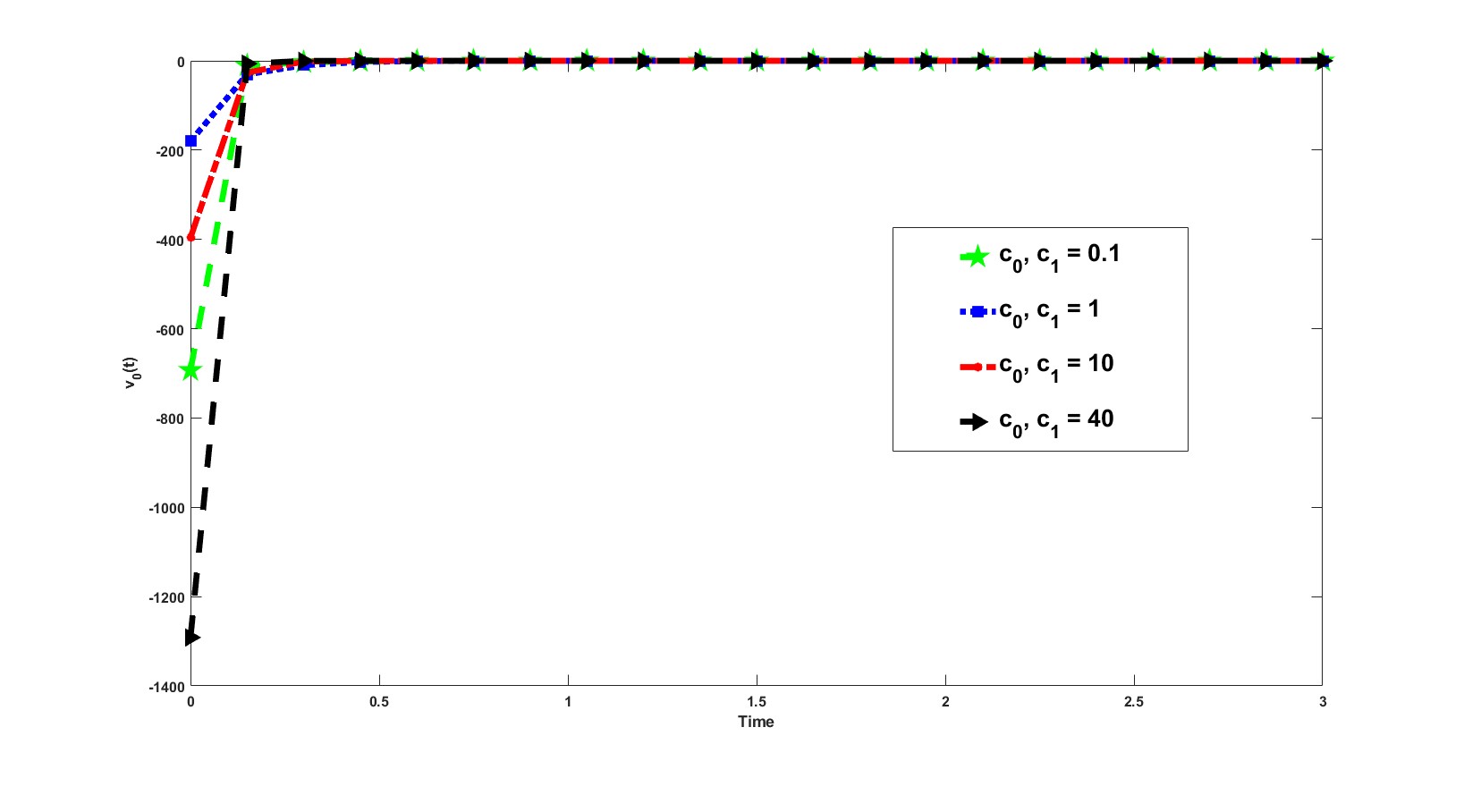}	
	  	(iii)\includegraphics[width= 0.40\textwidth]{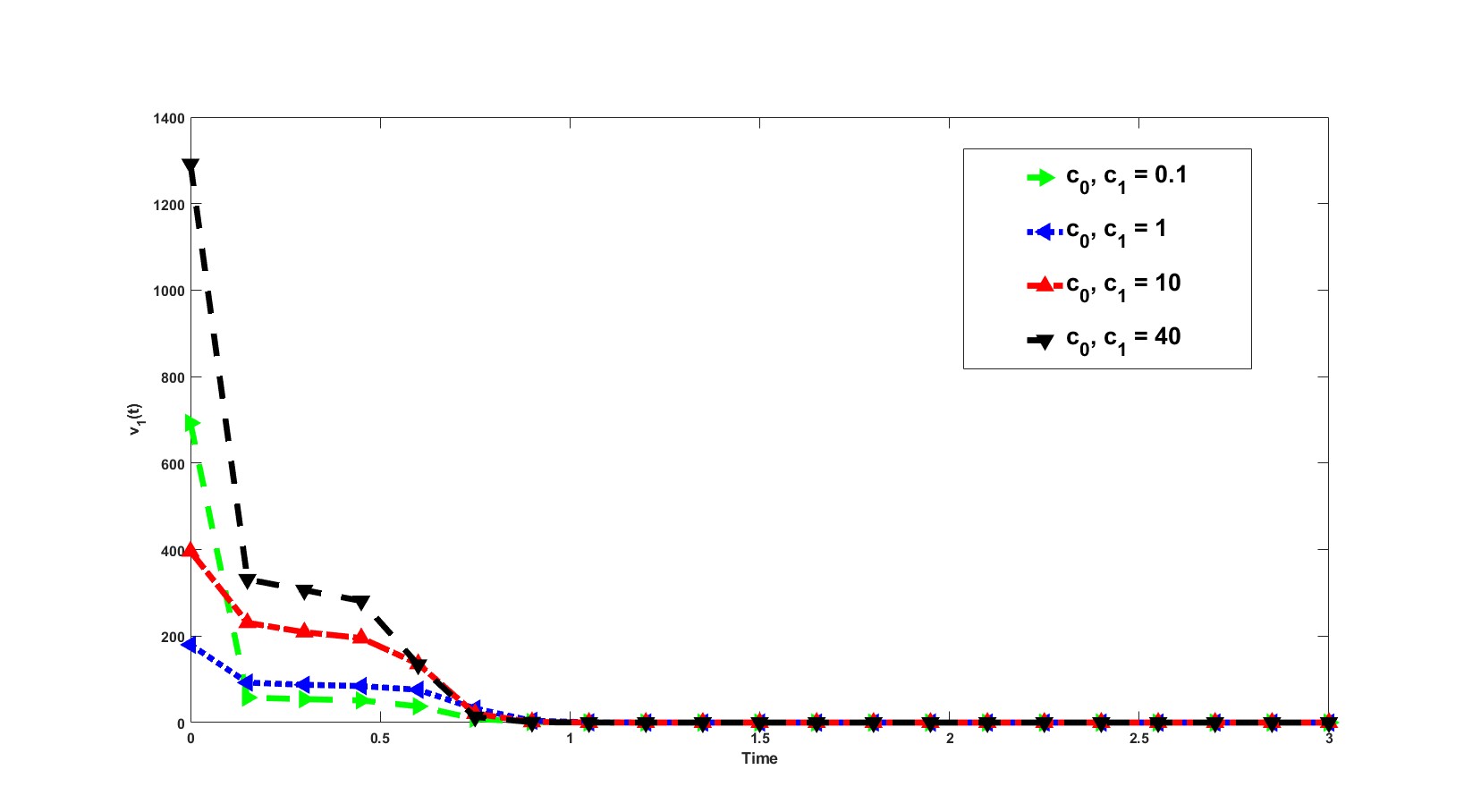}
	  	\caption{Example \ref{ex1}: {\bf (i)} Controlled and uncontrolled solution in the \(L^{2}\)-norm for  various values of \(c_{0}\) and \(c_{1}\).  
	  		{\bf (ii)} Control input for different values of \(c_{0}\) and \(c_{1}\) at the left boundary \(x=0\).  
	  		{\bf (iii)} Control input for various values of \(c_{0}\) and \(c_{1}\) at the right boundary \(x=1\).    
	  	}  
	  	\label{fig:ex1}
	  \end{figure}
\begin{table}[H]
	\centering
	\caption{ The order of convergence (O.C.) of the state variable in Example \ref{ex1}, considering varying the values of $h$ and a fixed value of $k=\frac{1}{1000}$.}
	\begin{tabular}{ c ||c|| c|| c|| c }
		\toprule
		h & $\norm{w-W}_{\infty}$         & O. C. & $\norm{w-W}$    &    O. C.    \\
		\midrule
		$\frac{1}{8}$& $ 4.8695e-05 $ &  $-- $ & $2.7887e-05$ &   $--$     \\
		
		$\frac{1}{16}$ &  $1.2323e-05 $ & $1.98$ & $6.7868e-06$&     $2.03$   \\
		
		$\frac{1}{32}$ &  $ 3.0955e-06  $ & $1.99$ & $1.663e-06$  &  $2.02$   \\
		
		$\frac{1}{64}$ & $7.7473e-07 $  & $ 1.99 $ &  $4.1061e-07$  &  $2.02$  \\
		
		$\frac{1}{128}$ &  $ 1.9377e-07$  & $1.99$ &  $1.0198e-07$  &  $2.00$  \\
		
	    $\frac{1}{256}$ &  $ 4.8477e-08$  & $2.00$ &  $2.5419e-08$  &  $2.00$  \\
		\bottomrule
	\end{tabular}
	\label{table:1}
\end{table}

\begin{table}[H]
	\centering
	\caption{ The order of convergence for the control inputs in Example \ref{ex1}, considering varying the values of $h$ and a fixed value of $k=\frac{1}{1000}$ with \(c_{0}, c_{1}=0.1\)}
	\begin{tabular}{ c ||c|| c|| c|| c }
		\toprule
		h & $\norm{v_{0}-V_{0}}_{\infty}$   & O. C. & $\norm{v_{1}-V_{1}}_{\infty}$    &  O. C.    \\
		\midrule
		$\frac{1}{8}$& $ 0.053$ &  $-- $ & $0.0180$ &   $--$     \\
		
		$\frac{1}{16}$ &  $0.0207 $ & $1.39$ & $0.0099$&     $0.86$   \\
		
		$\frac{1}{32}$ &  $ 0.0080 $ & $1.37$ & $0.0047$  &  $1.07$   \\
		
		$\frac{1}{64}$ & $0.0028 $  & $ 1.52 $ &  $0.0016$  &  $1.58$  \\
		
		$\frac{1}{128}$ &  $ 7.7891e-04$  & $1.85$ &  $4.4462e-04$  &  $1.85$  \\
		
		$\frac{1}{256}$ &  $ 1.9991e-04$  & $1.96$ &  $1.846e-04$  &  $1.93$  \\
		
		$\frac{1}{512}$ &  $4.9760e-05$  & $2.01$ &  $3.1751e-05$  &  $1.97$  \\
		\bottomrule
	\end{tabular}
	\label{table:2}
\end{table}

\begin{table}[H]
	\centering
	\caption{ The order of convergence for the control inputs in Example \ref{ex1}, considering varying values of $h$ and a fixed value of $k=\frac{1}{1000}$ with \(c_{0}, c_{1}=10\).}
	\begin{tabular}{ c ||c|| c|| c|| c }
		\toprule
		h & $\norm{v_{0}-V_{0}}_{\infty}$ & O. C. & $\norm{v_{1}-V_{1}}_{\infty}$    &   O. C.    \\
		\midrule
		$\frac{1}{8}$& $ 0.5905$ &  $-- $ & $0.2968$ &   $--$     \\
		
		$\frac{1}{16}$ &  $0.2897 $ & $1.03$ & $0.1720$&     $0.79$   \\
		
		$\frac{1}{32}$ &  $ 0.1275 $ & $1.18$ & $0.0803$  &  $1.09$   \\
		
		$\frac{1}{64}$ & $0.0436 $  & $ 1.55 $ &  $0.0265$  &  $1.59$  \\
		
		$\frac{1}{128}$ &  $ 0.0120$  & $1.86$ &  $0.0073$  &  $1.86$  \\
		
		$\frac{1}{256}$ &  $0.0031$  & $1.96$ &  $0.0019$  &  $1.95$  \\
		
		$\frac{1}{512}$ &  $7.6771e-04$  & $2.00$ &  $5.0238e-04$  &  $1.99$  \\
		\bottomrule
	\end{tabular}
	\label{table:3}
\end{table}


The next example shows the effect of the memory term for the state variable and control inputs. Moreover, we present the behavior of the state variable and control inputs for various values of the diffusion coefficient \(\nu\).
\begin{example}
	\label{ex2}
	We select the initial condition \(w_{0}(x)= 0.125 \cos(\pi x)-w_{d}\), with \(w_{d}=1\). We choose \(c_{0}, c_{1}=0.1\), \(\delta=5\), the spatial step size $h=\frac{1}{100}$, and temporal step size $k=\frac{1}{100}$. 
\end{example}
Figure \ref{fig:ex2}  shows the effect of the diffusion coefficient  while other parameters remain fixed. In Figure \ref{fig:ex2}(i), we observe that large values of \(\nu\) lead to faster decay for the state variable over time in the \(L^{2}\)-norm as here the decay rate is $\alpha= \min\{\nu, \delta, (c_{0}+w_{d}), (c_{1}+w_{d})\} = \nu$ ($\nu $ ranges from $0.1$ to $1$). Moreover, for various values of \(\nu \), Figures \ref{fig:ex2}(ii) and (iii) depict the controller values for the left \((x=0)\) boundary and right \((x=1)\) boundary, respectively. 
 \begin{figure}[ht]
	\centering
	(i)\includegraphics[width= 0.46\textwidth]{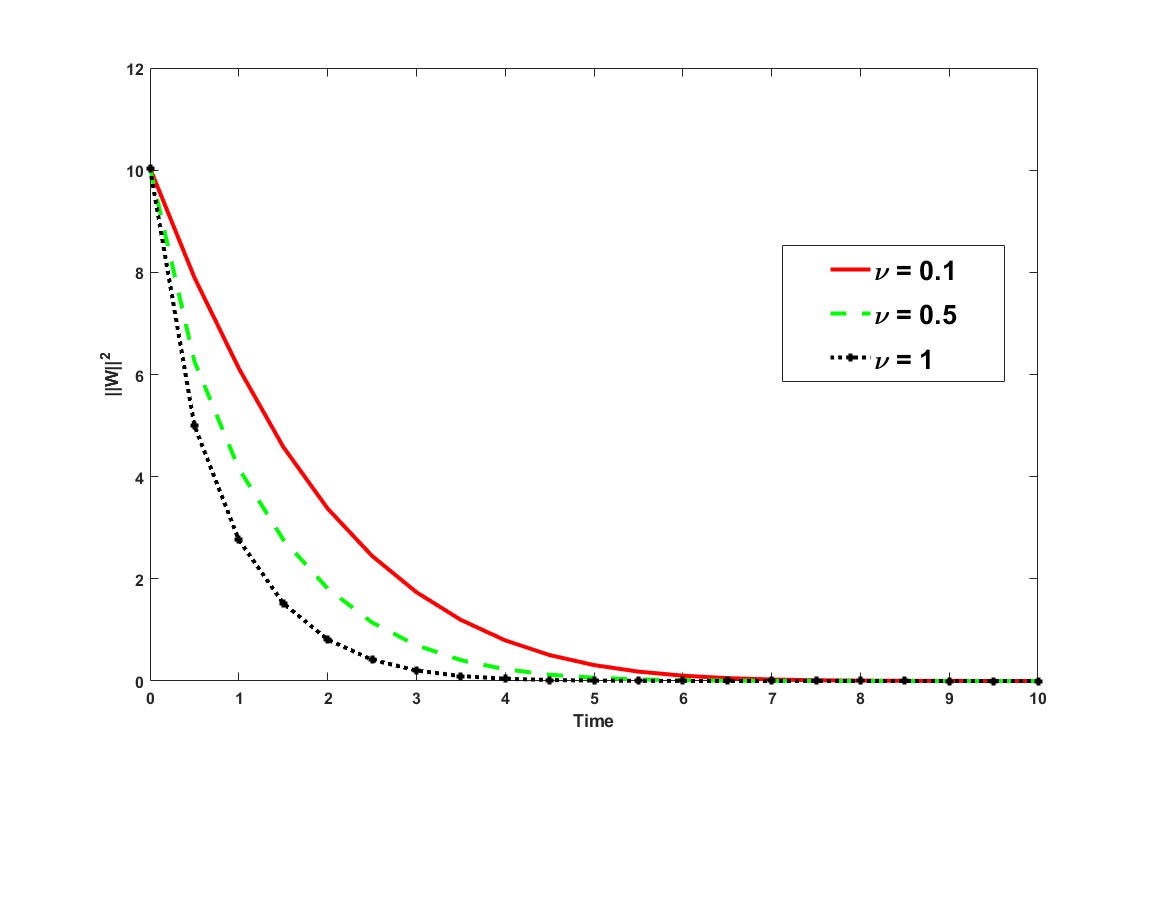}
	(ii)\includegraphics[width= 0.46\textwidth]{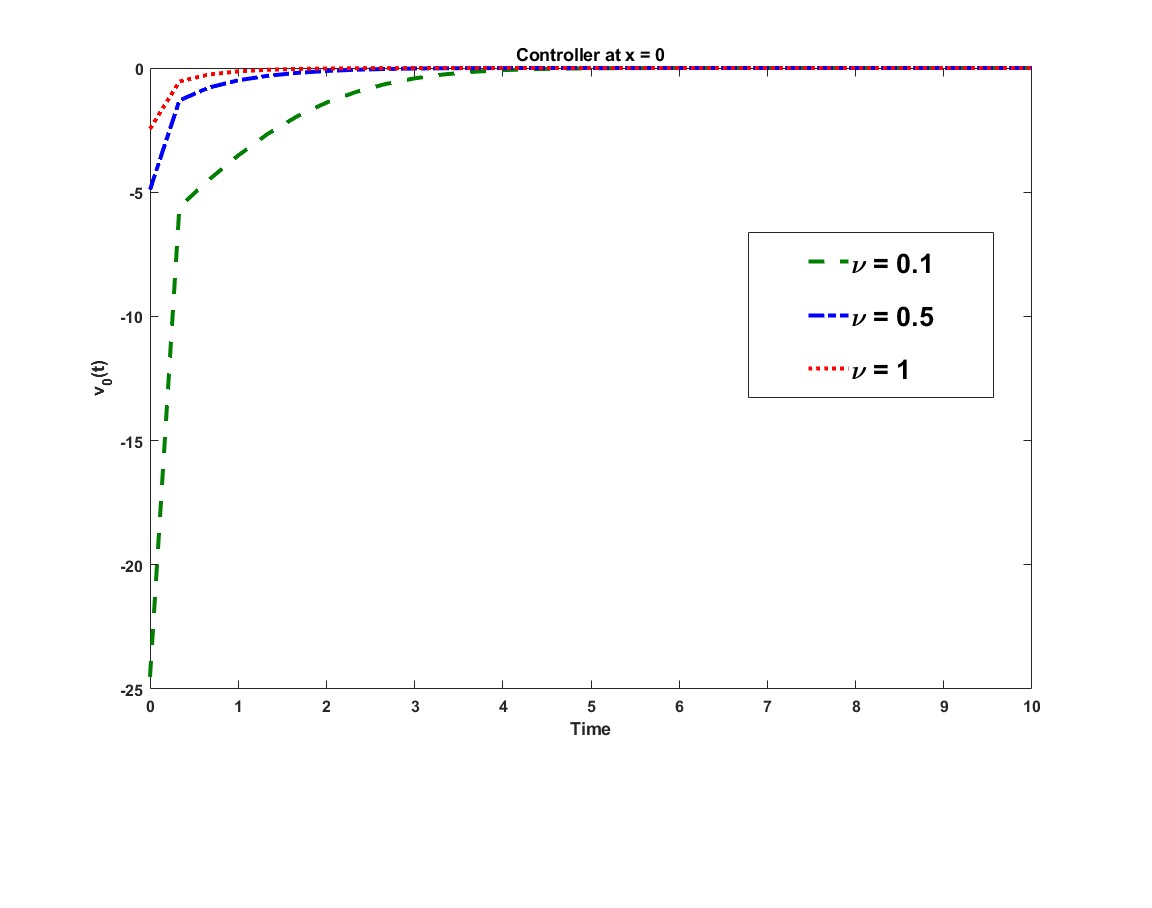}	
	(iii)\includegraphics[width= 0.50\textwidth]{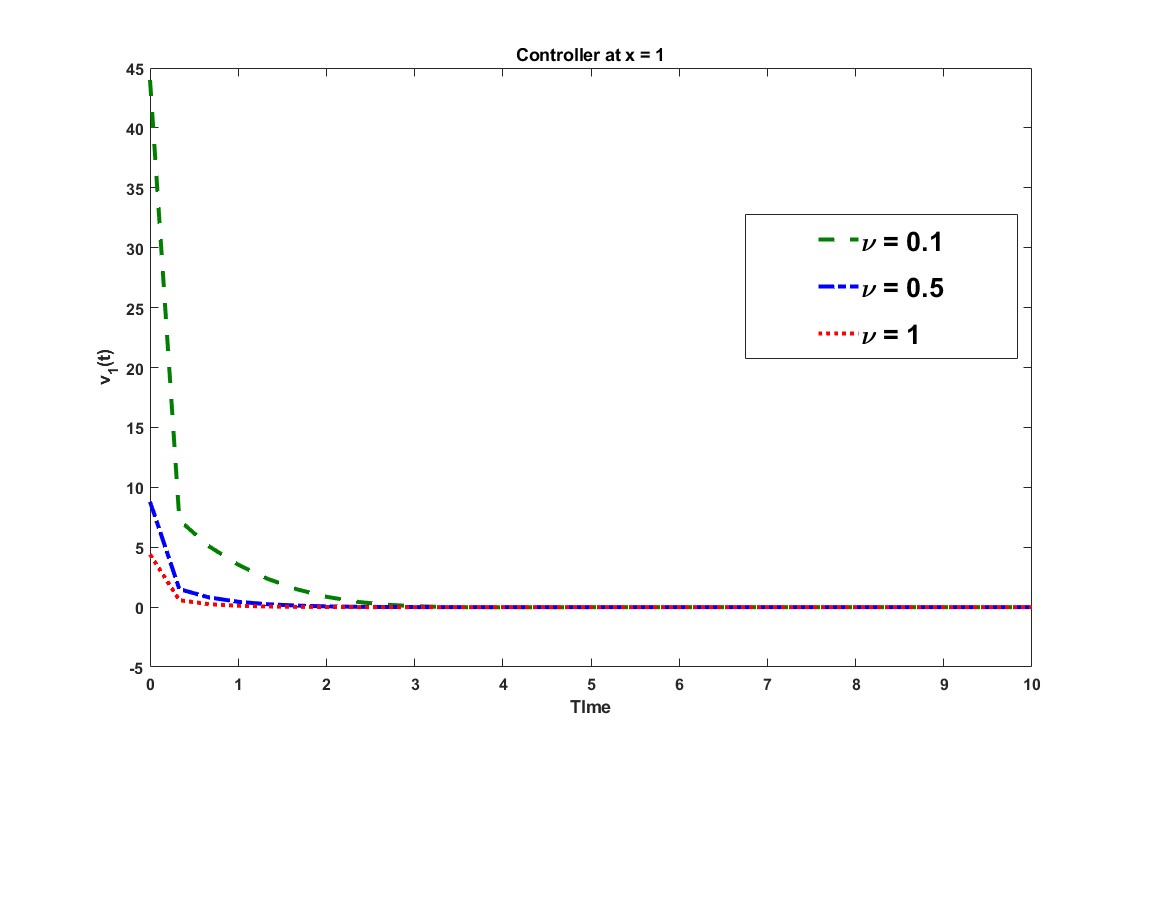}
	\caption{Example \ref{ex2}: {\bf (i)} Controlled  solution in the \(L^{2}\)-norm for various values of \(\nu\) with \(\rho=1\).  
		{\bf (ii)} Control input for different values of \(\nu\) with \(\rho=1\) at the left boundary \(x=0\).  
		{\bf (iii)} Control input for various values of \(\nu\) with \(\rho=1\) at the right boundary \(x=1\).    
	}  
	\label{fig:ex2}
\end{figure}

 Figure \ref{fig:ex3} contains the performance of the memory term coefficient keeping other parameters fixed. In Figure \ref{fig:ex3}(i), we notice that the state variable in the \(L^{2}\)-norm goes to zero for \(\rho=0,5,10,20,\) with the fixed value of the diffusion coefficient \(\nu=0.1 \).  In particular, we see that the state variable for the viscous Burgers' ($\rho=0$) equation decreases towards zero as time increases (see \cite{MR3790146} for more details). However, in the presence of a memory term,  we observe that as the values of $\rho$ increase, the state variable converges faster to the steady state solution in the $L^{2}$-norm.  In Figures \ref{fig:ex3}(ii) and (iii), we depict the controller values of the left boundary \((x=0)\) and right boundary \((x=1)\) for the numerous values of \(\rho = 0,10,20,30\) with fixed \(\nu=0.1\), respectively. Here as we increase $\rho$, the controllers have a tendency to settle at zero with higher decay in the transient phase.
\begin{figure}[h!]
	\centering
	(i)\includegraphics[width= 0.46\textwidth]{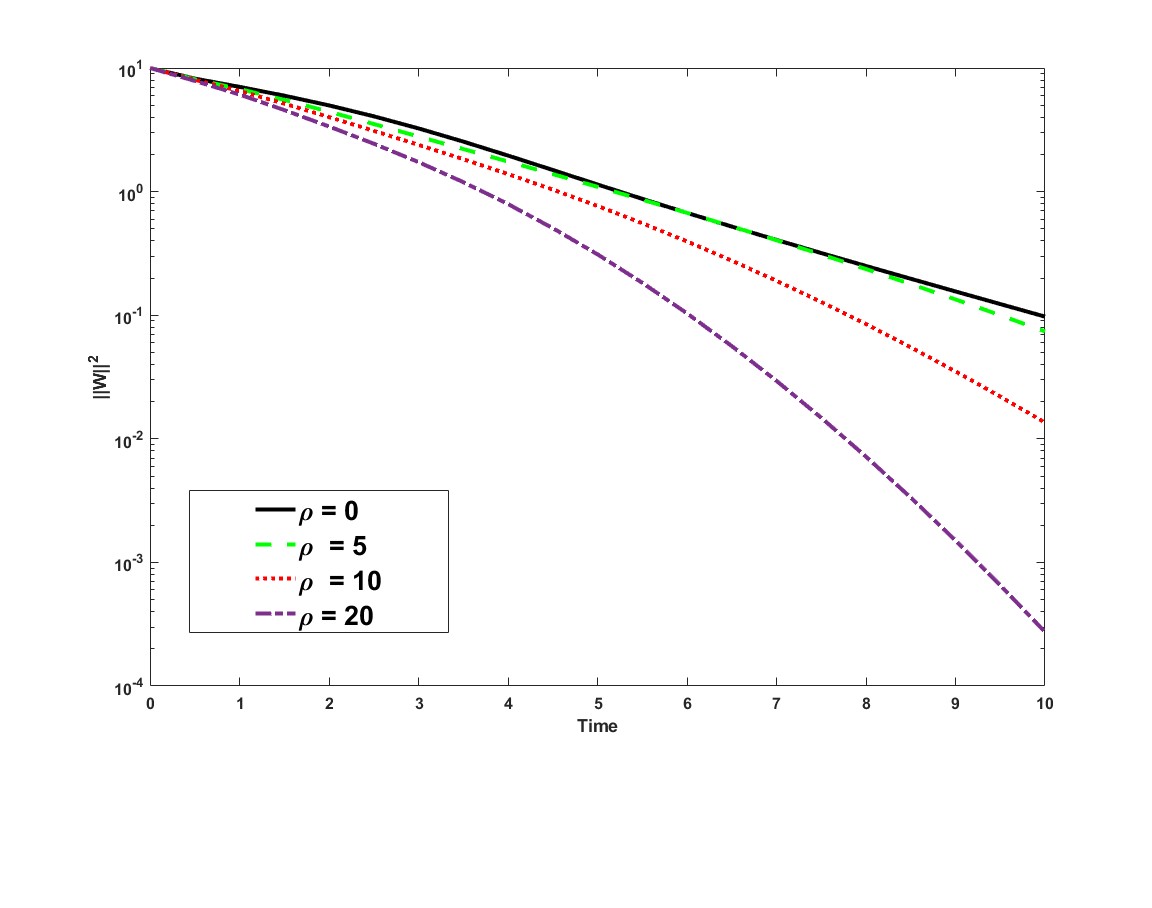}
	(ii)\includegraphics[width= 0.46\textwidth]{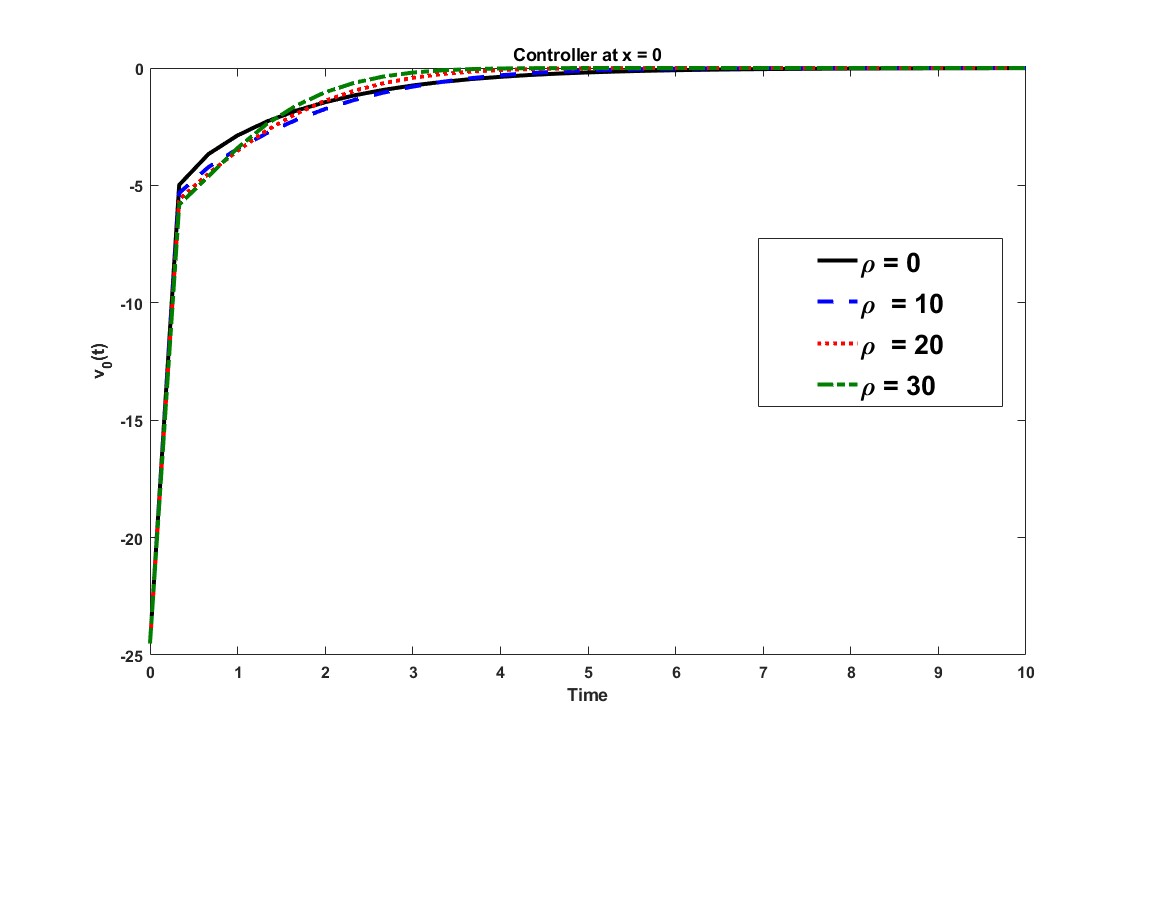}	
	(iii)\includegraphics[width= 0.50\textwidth]{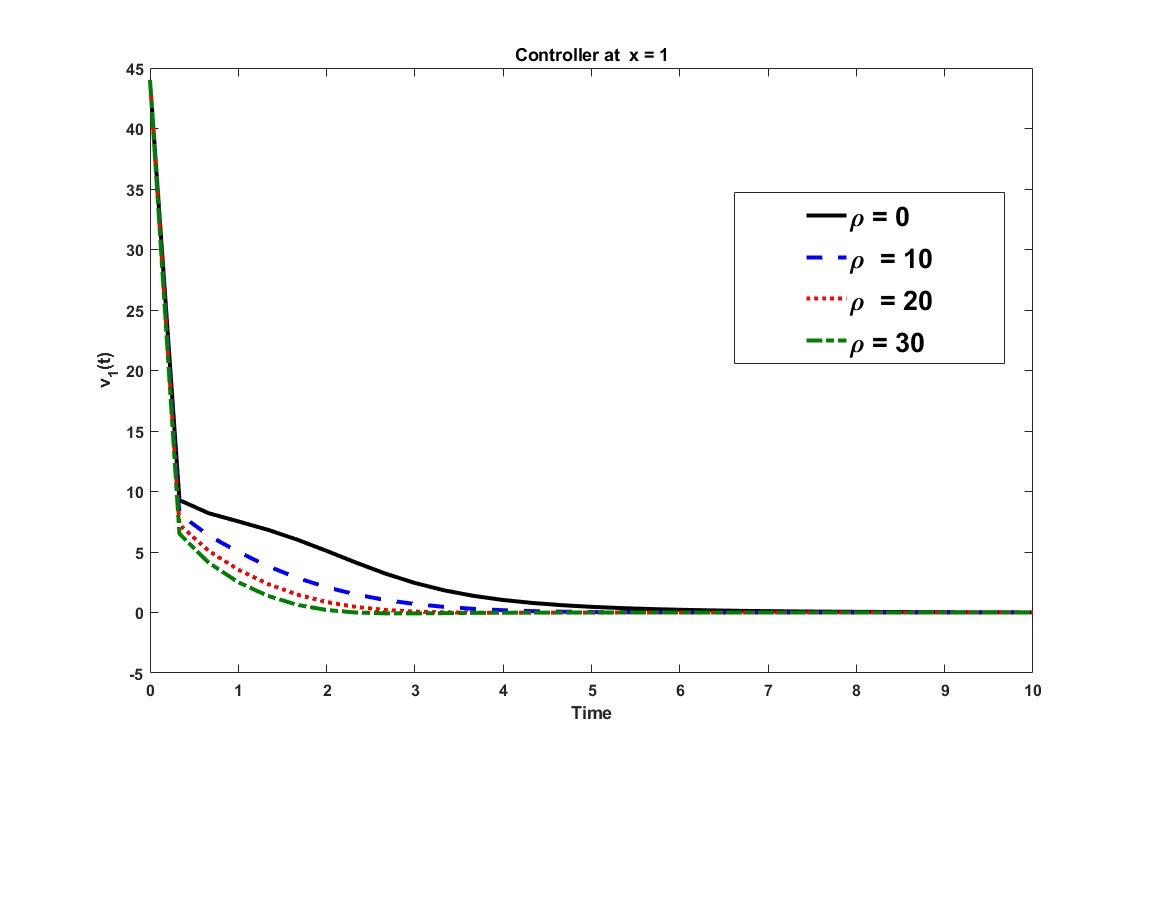}
	\caption{Example \ref{ex2}: {\bf (i)} In semi-log, controlled  solution in the \(L^{2}\)-norm for different values of  \(\rho\) with fixed \(\nu=0.1\).  
		{\bf (ii)} Control input for numerous values of \(\rho\) with fixed \(\nu=0.1\) at the left boundary \(x=0\).  
		{\bf (iii)} Control input for various values of  \(\rho\) with fixed \(\nu=0.1\) at the right boundary \(x=1\).    
	}  
	\label{fig:ex3}
\end{figure}
\section{Conclusion.}\label{6}
In this work, our aim is mainly two-fold. First, we have analyzed global stabilization of the viscous Burgers’ equation with a memory term under Neumann boundary feedback control using the control Lyapunov functional, establishing stabilization in the $L^{2}, H^{1},$ and \(H^{2}\)-norms. Next, a \(C^{0}\)-conforming finite element method has been applied for the spatial variable, keeping the time variable continuous, and an error analysis of the semi-discrete scheme has been established for the state variable and feedback control laws using the Ritz-Volterra projection. We have shown that the order of convergence for the state variable and control inputs is two. Moreover, we have examined the influence of the memory term for both the state variable and the feedback control laws.
 The work in this paper also opens the door to study other classes of nonlinear parabolic equations with memory under Neumann boundary feedback control laws.
\section*{Acknowledgments}
Sudeep Kundu gratefully acknowledges the support of the Science \& Engineering Research Board (SERB), Government of India, under the Start-up Research Grant, Project No. SRG/2022/000360.
\section*{Declarations}

\textbf{CONFlLICT OF INTEREST.} 

The authors declare no conflict of interest.

\end{document}